\def\ps@plain{\ps@empty
}
\newcommand\blfootnote[1]{%
  \let\thempfn\relax
  \footnotetext[0]{{#1}}
}
\newtheorem{thm}{Theorem}[section]
\newtheorem{conj}[thm]{Conjecture}
\newtheorem{lem}[thm]{Lemma}
\newtheorem{prop}[thm]{Proposition}
\numberwithin{equation}{section}
\DeclareMathOperator{\id}{id}
\DeclareMathOperator{\Gal}{\mathrm{Gal}}
\DeclareMathOperator{\End}{\mathrm{End}}
\DeclareMathOperator{\tors}{\mathrm{tors}}
\DeclareMathOperator{\Stab}{\mathrm{Stab}}
\DeclareMathOperator{\Pic}{\mathrm{Pic}}
\title{Torsion points on isogenous abelian varieties}
\author[G. A. Dill]{Gabriel A. Dill}
\address{University of Oxford, Mathematical Institute, Andrew Wiles Building, Radcliffe Observatory Quarter, Woodstock Road, Oxford OX2 6GG}
\email{gabriel.dill@maths.ox.ac.uk}
\date{\today}
\begin{document}

\begin{abstract}
Investigating a conjecture of Zannier, we study irreducible subvarieties of abelian schemes that dominate the base and contain a Zariski dense set of torsion points that lie on pairwise isogenous fibers. If everything is defined over the algebraic numbers and the abelian scheme has maximal variation, we prove that the geometric generic fiber of such a subvariety is a union of torsion cosets. We go on to prove fully or partially explicit versions of this result in fibered powers of the Legendre family of elliptic curves. Finally, we apply a recent result of Galateau-Mart\'{i}nez to obtain uniform bounds on the number of maximal torsion cosets in the Manin-Mumford problem across a given isogeny class. For the proofs, we adapt the strategy, due to Lang, Serre, Tate, and Hindry, of using Galois automorphisms that act on the torsion as homotheties to the family setting.
\end{abstract}
\subjclass[2010]{11G05, 11G50, 14G40, 14K02, 14K12}

\keywords{Isogeny, abelian scheme, Manin-Mumford, effectivity, uniformity, homothety.}

\maketitle

\tableofcontents

\section{Introduction}
The Manin-Mumford conjecture predicted that at most finitely many points on a curve of genus $g \geq 2$ become torsion when the curve is embedded in its Jacobian (in characteristic $0$). The conjecture was generalized to a statement about subvarieties of arbitrary dimension of a given abelian variety by Lang in \cite{LangBook} (see also \cite{MR0190146}) and was proven in this more general form in \cite{Raynaud} by Raynaud, who had already proven it in the case of curves in \cite{RaynaudCurve}. The analogous statement for linear tori, also conjectured to hold by Lang, was then proven by Laurent in \cite{MR767195}. Finally, Hindry proved the analogous statement for arbitrary commutative algebraic groups in \cite{MR969244}. In this article, subvarieties will always be closed. Varieties will be reduced, but not necessarily irreducible. Fields will always be of characteristic $0$.

More recently, Masser and Zannier have studied and proven a relative version of the Manin-Mumford conjecture for curves in families of abelian varieties over $\bar{\mathbb{Q}}$ in a series of papers culminating in \cite{MaZaPreprint}. Unlike the classical conjecture, this relative version, conjectured by Pink in \cite{PUnpubl}, is naturally concerned with unlikely intersections with positive-dimensional subvarieties rather than points. In a slightly different direction, Zannier proposed the following conjecture that again concerns unlikely intersections with points (Conjecture 1.4 in \cite{G15}):

\begin{conj}[Zannier]\label{conj:zannier}
Let $\pi: \mathfrak{A}_{g,l} \to A_{g,l}$ denote the universal family of complex principally polarized abelian varieties of dimension $g$ with symplectic level $l$-structure and let $A_0$ be a fixed complex abelian variety. Let $\mathcal{V} \subset \mathfrak{A}_{g,l}$ be an irreducible subvariety that contains a Zariski dense set of points $p \in \mathfrak{A}_{g,l}(\mathbb{C})$ such that the fiber $(\mathfrak{A}_{g,l})_{\pi(p)}$ is isogenous to $A_0$ and $p$ is torsion on $(\mathfrak{A}_{g,l})_{\pi(p)}$. Then $\pi(\mathcal{V})$ is a totally geodesic subvariety of $A_{g,l}$ and $\mathcal{V}$ is an irreducible component of a subgroup scheme of $\mathfrak{A}_{g,l} \times_{A_{g,l}} \pi(\mathcal{V}) \to \pi(\mathcal{V})$.
\end{conj}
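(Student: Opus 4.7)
The plan is to adapt the Lang--Serre--Tate--Hindry strategy for the classical Manin--Mumford conjecture --- exploiting Galois automorphisms that act on torsion as homotheties --- to the relative setting. The first step is to reduce to the arithmetic setting: by standard spreading-out and specialization arguments, I would pass to the case where $\mathcal{V}$, $A_0$, and all structure maps are defined over $\bar{\mathbb{Q}}$, and in a first pass would also assume that the abelian scheme has maximal variation over $\pi(\mathcal{V})$. It then suffices to show separately (i) that the geometric generic fiber $\mathcal{V}_\eta$ of $\mathcal{V} \to \pi(\mathcal{V})$ is a union of torsion cosets, and (ii) that $\pi(\mathcal{V})$ is totally geodesic. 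For (ii) one would appeal to an Andr\'e--Pink--Zannier type input: a subvariety of $A_{g,l}$ containing a Zariski-dense set of moduli points all isogenous to a fixed $A_0$ should be totally geodesic by results in the style of Pila--Tsimerman and Orr.

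For (i), the heart of the argument is the Galois-homothety construction. Let $K$ be a number field of definition for $\mathcal{V}$. For a torsion point $p \in \mathcal{V}(\bar{\mathbb{Q}})$ whose fiber $B$ is isogenous to $A_0$ via some $\varphi: A_0 \to B$, classical results of Serre and Bogomolov on the image of Galois in $\Aut$ of the torsion produce an element $\sigma \in \Gal(\bar{\mathbb{Q}}/L)$ that acts on $A_0[n]$ as multiplication by $\lambda$, where $n = \ord(p)$, $\lambda$ is coprime to $n$, $\lambda \not\equiv \pm 1 \pmod{n}$, and $L/K$ is a finite extension containing the residue field of $\pi(p)$. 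Transporting via $\varphi$, the element $\sigma$ acts as $[\lambda]$ on $p$ itself, up to a controlled torsion error depending on $\deg \varphi$. Since $\mathcal{V}$ is defined over $K$, this gives $\sigma(p) = [\lambda](p) \in \mathcal{V}$. Running this over the Zariski-dense set of such $p$ and varying $\lambda$, one would deduce that $\mathcal{V}_\eta$ is stable under $[\lambda]$ for sufficiently many $\lambda$, and the standard Hindry argument then forces $\mathcal{V}_\eta$ to be a union of torsion cosets of abelian subvarieties of $\mathfrak{A}_{g,l,\eta}$.

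The hardest step is making the Galois-homothety construction \emph{uniform in the fiber}. In the classical case one works over a single abelian variety, and Serre's and Bogomolov's theorems apply directly; here the fibers vary and are only isogenous to $A_0$, with a priori unbounded isogeny degree. One therefore needs polynomial control on the index of the Galois image in $\Aut$ of the torsion as this degree grows, and it is precisely for this that the recent bound of Galateau--Mart\'inez mentioned in the abstract is decisive. A secondary but serious issue is that the element $\sigma$ must also fix $\pi(p)$; maximal variation of the abelian scheme forces the monodromy along $\pi(\mathcal{V})$ to be large enough that one can produce $\sigma$ simultaneously acting as a homothety on $p$ and trivially on the residue field of $\pi(p)$. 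Removing the maximal-variation hypothesis, and extending the ground field from $\bar{\mathbb{Q}}$ to $\mathbb{C}$, remain genuine obstacles that lie outside the reach of this strategy and account for why only a special case of Zannier's conjecture is proved here.
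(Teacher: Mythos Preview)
The statement you are attempting to prove is a \emph{conjecture}; the paper does not prove it. What the paper establishes is Theorem~\ref{thm:isogenymaninmumford}, which is only the ``vertical half'' of the conclusion (that the geometric generic fiber of $\mathcal{V}$ is a union of torsion cosets), under the additional hypotheses that everything is defined over $\bar{\mathbb{Q}}$ and that the moduli map has generically finite fibers. Your proposal is therefore not comparable to a proof in the paper, because no such proof exists.

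That said, your outline contains both correct intuitions and genuine errors. Your part (i) is broadly the strategy the paper uses for Theorem~\ref{thm:isogenymaninmumford}, but you misidentify the mechanism that makes the homothety argument uniform across fibers. You write that one needs ``polynomial control on the index of the Galois image\ldots as the isogeny degree grows'' and that Galateau--Mart\'inez supplies this. Neither claim is correct. The actual observation (Lemma~\ref{lem:homothety}) is much cleaner: a Galois element $\sigma$ acting as a homothety on $(A_0)_{\tors}$ automatically fixes \emph{every} finite subgroup of $A_0$, hence fixes the kernel of any isogeny $\varphi:A_0\to \mathcal{A}_{\pi(p)}$, so $\sigma$ transports to a homothety on $\mathcal{A}_{\pi(p)}$ with the \emph{same} Serre constant, independent of $\deg\varphi$. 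No growth control is needed. Galateau--Mart\'inez is used in the paper for an entirely separate purpose (Theorem~\ref{thm:countingacrossisogenyclass}, counting maximal torsion cosets), not for the homothety construction.

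Your part (ii) is the more serious gap. You propose to deduce that $\pi(\mathcal{V})$ is totally geodesic by ``results in the style of Pila--Tsimerman and Orr,'' but Orr's theorem applies only to curves, and no such result is available for $\dim\pi(\mathcal{V})>1$; this is precisely why the conjecture remains open. Finally, the reduction from $\mathbb{C}$ to $\bar{\mathbb{Q}}$ that you call ``standard spreading-out'' is not standard here: the Zariski-dense set of special points need not descend under specialization in a controlled way, and you yourself flag this as a genuine obstacle in your last sentence --- which contradicts the opening claim that the reduction is routine.
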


Gao has proven in \cite{G15} that Conjecture \ref{conj:zannier} holds if $\dim \pi(\mathcal{V}) \leq 1$. Orr had previously shown in \cite{MR3377393} that any curve in the moduli space of complex principally polarized abelian varieties of dimension $g$ that contains infinitely many points corresponding to pairwise isogenous abelian varieties is totally geodesic. The use of isogenies also allows to formulate a relative version of the Mordell-Lang conjecture, known as the Andr\'e-Pink-Zannier conjecture (Conjecture 1.2 in \cite{G15}). It is a consequence of Pink's more general Conjecture 1.6 in \cite{MR2166087} on intersections of subvarieties of mixed Shimura varieties with generalized Hecke orbits. Special cases of (variants of) the Andr\'e-Pink-Zannier conjecture and Conjecture \ref{conj:zannier} have been proven by Habegger in \cite{MR3181568}, by Pila in \cite{MR3164515}, by Lin and Wang in \cite{MR3383643}, by Gao in \cite{G15}, and by the author in \cite{D19} and \cite{D18}.

Most of these results concern families of abelian varieties whose base variety is a curve and most of them are proven via the Pila-Zannier strategy of o-minimal point counting that goes back to Pila and Zannier's new proof of the Manin-Mumford conjecture in \cite{MR2411018}. Consequently, the effectivity of these results is sometimes unclear. However, Binyamini's results in \cite{Binyamini} and \cite{Bin20} suggest that at least the o-minimal point counting could in principle be made effective. For the Pila-Zannier strategy and unlikely intersections in general, see Zannier's book \cite{MR2918151}.

The purpose of this article is to apply another classical method to this problem: We adapt the use of the Galois operation on the torsion points of an abelian variety, due to Lang, Serre, Tate, and Hindry (see \cite{MR0190146} and \cite{MR969244}), to the family setting. The fundamental observation that makes this approach work is the following: If $A_0$ is an abelian variety over a number field $K$ with fixed algebraic closure $\bar{K}$ and $\sigma \in \Gal(\bar{K}/K)$ acts on the torsion of $(A_0)_{\bar{K}}$ as a homothety, then $\sigma$ fixes every finite subgroup of $(A_0)_{\bar{K}}$. Hence, for any quotient $A$ of $(A_0)_{\bar{K}}$ by a finite subgroup, i.e. for any abelian variety isogenous to $(A_0)_{\bar{K}}$, the conjugate of $A$ by $\sigma$ is isomorphic to $A$. The existence of enough such $\sigma$ acting on the torsion of $(A_0)_{\bar{K}}$ as homotheties is guaranteed by a theorem of Serre (\cite{MR1730973}, No. 136, Th\'{e}or\`{e}me 2').

Applying this approach in a family setting seems to be new and yields both qualitatively and quantitatively new results.

Qualitatively, we essentially prove the ``vertical" half of the conclusion in Zannier's conjecture in Section \ref{sec:mmisog} in the case where everything is defined over $\bar{\mathbb{Q}}$, allowing the base variety to be of arbitrary dimension:

\begin{thm}\label{thm:isogenymaninmumford}
Let $S$ be an irreducible variety, defined over $\bar{\mathbb{Q}}$. Fix an algebraic closure $\overline{\bar{\mathbb{Q}}(S)}$ of $\bar{\mathbb{Q}}(S)$ and let $\xi$ denote the geometric generic point of $S$ with residue field $\overline{\bar{\mathbb{Q}}(S)}$. Let $\pi: \mathcal{A} \to S$ be a principally polarized abelian scheme of relative dimension $g$ over $S$, also defined over $\bar{\mathbb{Q}}$. Let $\eta$ denote the generic point of $S$ and suppose that the natural morphism $\rho: S \to A_g$ to the coarse moduli space $A_g$ of principally polarized abelian varieties of dimension $g$ over $\bar{\mathbb{Q}}$ satisfies $|\rho^{-1}(\rho(\eta))| < \infty$.

Let $\mathcal{V} \subset \mathcal{A}$ be an irreducible subvariety such that $\pi(\mathcal{V}) = S$. Fix an abelian variety $A_0$, defined over $\bar{\mathbb{Q}}$. Suppose that the set of $x \in \mathcal{V}(\bar{\mathbb{Q}})$ such that $x$ is a torsion point of the fiber $\mathcal{A}_{\pi(x)}$ and $\mathcal{A}_{\pi(x)}$ is isogenous to $A_0$ is Zariski dense in $\mathcal{V}$. Then $\mathcal{V}_\xi$ is equal to a union of translates of abelian subvarieties of $\mathcal{A}_\xi$ by torsion points of $\mathcal{A}_\xi$.
\end{thm}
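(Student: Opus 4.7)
The strategy is to transplant the classical Lang--Serre--Tate--Hindry argument, outlined in the introduction, to the family setting. I begin with a setup step: fix a number field $K \subset \bar{\mathbb{Q}}$ over which all of $\mathcal{A} \to S$, $\mathcal{V}$, and $A_0$ are defined, and replace $S$ by a dense open subset so that there is a uniform bound $N$ with $|\rho^{-1}(\rho(s))| \leq N$ for every $s \in S(\bar{\mathbb{Q}})$ (possible since $\rho$ is generically quasi-finite). Passing to a finite \'{e}tale cover of $S$ and a finite extension of $K$, I endow $\mathcal{A}/S$ with a rigidifying level-$n$ structure for some $n \geq 3$, so that isomorphisms between fibers respecting the polarization and the level structure are unique when they exist; this removes all automorphism ambiguity in the identification $\sigma^{\ast}\mathcal{A}_s \simeq \mathcal{A}_s$ appearing below.

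Next, by Serre's theorem \cite[No.~136, Th\'{e}or\`{e}me~2$'$]{MR1730973} there is an open subgroup $U \subseteq \hat{\mathbb{Z}}^{\ast}$ such that for each $m \in U$ I may pick $\sigma_m \in \Gal(\bar{\mathbb{Q}}/K)$ acting on the adelic Tate module of $A_0$ as the scalar $m$. Each such $\sigma_m$ preserves every finite subgroup of $A_0(\bar{\mathbb{Q}})$ set-wise; hence for every $s \in S(\bar{\mathbb{Q}})$ with $\mathcal{A}_s$ isogenous to $A_0$ it fixes $\rho(s) \in A_g(\bar{\mathbb{Q}})$ and permutes the finite set $\rho^{-1}(\rho(s))$. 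The power $\tau_m := \sigma_m^{N!}$ therefore fixes every such $s$ and, under the rigidification, acts on the torsion of each corresponding $\mathcal{A}_s$ as the scalar $[m^{N!}]$.

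Now pick any torsion $x \in \mathcal{V}(\bar{\mathbb{Q}})$ in the hypothesized Zariski dense set and set $s = \pi(x)$. Then $\tau_m(x) = [m^{N!}]x$ in $\mathcal{A}_s(\bar{\mathbb{Q}})$ on the one hand, and $\tau_m(x) \in \tau_m(\mathcal{V})(\bar{\mathbb{Q}}) = \mathcal{V}(\bar{\mathbb{Q}})$ on the other (since $\mathcal{V}$ is $K$-rational). Hence $[m^{N!}]x$ lies in $\mathcal{V}$ for a Zariski dense set of $x$, so $[m^{N!}]\mathcal{V} \subseteq \mathcal{V}$; irreducibility of $\mathcal{V}$ and finite surjectivity of $[m^{N!}]$ on each fiber upgrade this to $[m^{N!}]\mathcal{V} = \mathcal{V}$. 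Taking geometric generic fibers, $\mathcal{V}_\xi \subseteq \mathcal{A}_\xi$ is stabilized by the scalar $m^{N!}$ for every $m \in U$, hence by infinitely many integers $> 1$. A standard endgame---quotient by $\Stab_{\mathcal{A}_\xi}(\mathcal{V}_\xi)$ and use finiteness of the fixed locus of any $[M]-1$ with $M > 1$---then forces every irreducible component of $\mathcal{V}_\xi$ to be a torsion translate of an abelian subvariety of $\mathcal{A}_\xi$, as claimed.

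The main obstacle I anticipate is combining Serre's theorem, which concerns a single abelian variety $A_0$, with the variation of fibers in the family: one needs a single Galois element that simultaneously acts as a scalar on the torsion of every $\mathcal{A}_s$ (isogenous to $A_0$) \emph{and} fixes the base point $s \in S(\bar{\mathbb{Q}})$. The rigidification via a level structure (which kills automorphism ambiguity in $\sigma^{\ast}\mathcal{A}_s \simeq \mathcal{A}_s$) and the quasi-finiteness hypothesis $|\rho^{-1}(\rho(\eta))| < \infty$ (which keeps each $\sigma$-orbit on $S$ finite so that a bounded power of $\sigma$ fixes $s$) are precisely the ingredients needed to resolve this difficulty.
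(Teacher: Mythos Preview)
Your setup—choosing $K$, shrinking $S$, invoking Serre's homothety theorem, and using quasi-finiteness of $\rho$ so that a bounded power of each $\sigma_m$ fixes every relevant $s$—is exactly the content of the paper's Lemma~\ref{lem:homothety}. The paper kills the residual automorphism $\iota':\mathcal{A}_s\to\mathcal{A}_s$ via R\'emond's bound on fields of definition of homomorphisms rather than via a level structure, but your level-$n$ rigidification works too (once you also require $m\equiv 1\pmod n$, so that $\iota'$ acts trivially on $\mathcal{A}_s[n]$ and hence $\iota'=\id$).

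The gap is in the next step. You write ``$[m^{N!}]x\in\mathcal{V}$ for a Zariski dense set of $x$, so $[m^{N!}]\mathcal{V}\subseteq\mathcal{V}$,'' and then ``hence by infinitely many integers $>1$.'' But $m\in U\subseteq\hat{\mathbb{Z}}^{\ast}$ is a profinite unit, not an integer, so $[m^{N!}]$ is an operation defined only on torsion points, not a morphism $\mathcal{A}\to\mathcal{A}$; there is no variety ``$[m^{N!}]\mathcal{V}$.'' If instead you fix an \emph{integer} $M\geq 2$, then the homothety only gives $\tau(x)=Mx$ when $\gcd(M,\ord x)=1$ (Galois preserves orders), and you need the subset $\Sigma_M=\{x\in\Sigma:\gcd(M,\ord x)=1\}$ to be Zariski dense in $\mathcal{V}$. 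Nothing in the hypotheses forces this for any $M$: the dense set $\Sigma$ could a priori consist entirely of points whose orders are divisible by every prime up to any given bound. So the implication ``$[M]\mathcal{V}=\mathcal{V}$ for infinitely many integers $M$'' is not justified.

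This is precisely why the paper does \emph{not} globalize to a statement $[M]\mathcal{V}=\mathcal{V}$. Instead it runs Hindry's argument fibre by fibre (Proposition~\ref{prop:hindry}): for a single torsion point $x$ of order $N$ one chooses integers $a$ coprime to $N$, and for each such $a$ a \emph{different} Galois element sends $x$ to $a^{B}x$. These conjugates all lie in $\mathcal{V}_{\pi(x)}$, and a degree count on the intersections $\bigcap_j [d^j]^{-1}(\mathcal{V}_{\pi(x)})$ then bounds $N$ unless $x$ lies on a positive-dimensional coset inside $\mathcal{V}_{\pi(x)}$ (Proposition~\ref{prop:galoisoperation}). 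Lemma~\ref{lem:not-zariskidense-too} confines such cosets to a proper subvariety when the generic stabilizer is finite, and a Poincar\'e-reducibility quotient handles the positive-dimensional stabilizer case. Your endgame ``$[M]W=W$ for infinitely many $M$ forces $W$ to be a torsion coset'' is correct once established, but you have not established its hypothesis; the paper's route through Propositions~\ref{prop:galoisoperation}--\ref{prop:hindry} and Lemma~\ref{lem:not-zariskidense-too} is what actually closes the argument.
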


Note however that the conclusion in Theorem \ref{thm:isogenymaninmumford} concerns irreducible components of algebraic subgroups of the geometric generic fiber instead of irreducible components of subgroup schemes and an abelian subvariety of the generic fiber is not always the generic fiber of an abelian subscheme (see Lemma 2.9 and the following counterexample in \cite{BD}). Nevertheless, one can use ``spreading out" to show that the conclusion in Theorem \ref{thm:isogenymaninmumford} is optimal if $\mathcal{A}$ contains a Zariski dense set of fibers that are isogenous to $A_0$.

The condition on the morphism $\rho$ is satisfied for example if $\rho$ is quasi-finite. For Theorem \ref{thm:isogenymaninmumford} to hold, some condition on $\rho$ is clearly necessary. For example, if $\mathcal{A} \to S$ is an abelian scheme of positive relative dimension with a Zariski dense set of pairwise isogenous fibers, then the image of the diagonal section of the abelian scheme $\mathcal{A} \times_{S} \mathcal{A} \to \mathcal{A}$ contains a Zariski dense set of torsion points that lie on pairwise isogenous fibers. However, the geometric generic fiber of the diagonal section is not a torsion point.

Quantitatively, we can apply the method to prove fully or partially explicit results. For this, we turn to a concrete example in Section \ref{sec:legendrecurve}: Let $Y(2) = \mathbb{A}_{\mathbb{Q}}^{1}\backslash\{0,1\}$ and let $\mathcal{E} \hookrightarrow Y(2) \times_{\mathbb{Q}} \mathbb{P}_{\mathbb{Q}}^2 \subset \mathbb{P}_{\mathbb{Q}}^1  \times_{\mathbb{Q}} \mathbb{P}_{\mathbb{Q}}^2$ be the Legendre family of elliptic curves over $Y(2)$, defined by the equation $Y^2Z = X(X-Z)(X-\lambda Z)$, where $\lambda$ is the affine coordinate on $Y(2)$ and $[X:Y:Z]$ are homogeneous projective coordinates on $\mathbb{P}_{\mathbb{Q}}^2$. Both $Y(2)$ and $\mathcal{E}$ are varieties over $\mathbb{Q}$. For $g \in \mathbb{N} = \{1,2,\hdots\}$, we denote the $g$-fold fibered power $\mathcal{E} \times_{Y(2)} \cdots \times_{Y(2)} \mathcal{E}$ by $\mathcal{E}^{(g)}$. The structural morphism is again denoted by $\pi: \mathcal{E}^{(g)} \to Y(2)$.

If everything is defined over $\bar{\mathbb{Q}}$, then the purely qualitative statement of ``Manin-Mumford with isogenies" is known in this case by \cite{MR3181568}. If furthermore the subvariety is a curve or the fixed abelian variety is a power of an elliptic curve without CM, then the qualitative statement of ``Mordell-Lang with isogenies" is also known by \cite{D19} and \cite{D18}. The new features of the results we present here are their full or partial explicitness and sometimes their effectivity.

We say that a multihomogeneous polynomial of multidegree $(d_1,\hdots,d_k)$ has multidegree at most $(D_1,\hdots,D_k)$ if $d_i \leq D_i$ ($i=1,\hdots,k$). The following theorem for curves in the Legendre family is completely explicit:

\begin{thm} \label{thm:effectiveisogenymm}
Let $K$ be a number field with a fixed algebraic closure $\bar{K}$. Let $\mathcal{C} \subset \mathcal{E}_K$ be a (maybe reducible) curve such that each of its irreducible components surjects onto $Y(2)_K$. Suppose that $\mathcal{C}$ is defined in $\mathcal{E}_K \subset \mathbb{P}_K^1 \times_K \mathbb{P}_K^2$ by bihomogeneous polynomials of bidegree at most $(D_1,D_2) \in \mathbb{N}^2$ with coefficients in $K$. Let $E_0$ be an elliptic curve, defined over $K$, let $j(E_0) \in K$ denote its $j$-invariant, and let $h(E_0)$ denote its stable Faltings height.

Suppose that $p \in \mathcal{C}(\bar{K})$ is torsion on $\mathcal{E}_{\pi(p)}$ and $\mathcal{E}_{\pi(p)}$ is isogenous to $(E_0)_{\bar{K}}$. Then the order of $p$ is bounded by
\[ \max\left\{(3CD_2)^{4},\exp\left(2^{\frac{18}{5}}\right)\right\},\]
where
\[ C = \begin{cases}
\exp\left(1.9 \cdot 10^{10}\right)\left([K:\mathbb{Q}]\max\{1,h(E_0),\log [K:\mathbb{Q}]\}\right)^{12395} & \text{(if $E_0$ has no CM),}\\
6[K:\mathbb{Q}(j(E_0))] & \text{(if $E_0$ has CM)}.\\
\end{cases}\]
\end{thm}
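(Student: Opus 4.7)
The plan is to pit a Galois-theoretic lower bound on the size of the $\Gal(\bar{K}/L)$-orbit of $p$ against a fibrewise B\'ezout upper bound, where $L/K$ is a suitable finite extension. Write $\lambda_0 = \pi(p)$, $F = \mathcal{E}_{\lambda_0}$, and $N = \ord(p)$. The first task is to produce an isogeny $\phi \colon (E_0)_{\bar{K}} \to F$ of explicitly bounded degree $d \leq C$: the Masser--W\"ustholz isogeny theorem handles the non-CM case, while the theory of complex multiplication and class field theory handle the CM case (where $[K(j(F)):K(j(E_0))]$ is directly controlled). Since the forgetful cover $Y(2) \to Y(1)$ has degree $6$, this also bounds $[K(\lambda_0):K]$, and one can then take a finite extension $L/K$ of explicitly controlled degree over which $\lambda_0$, $F$, and $\phi$ are all defined.

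Next I collect the Galois-theoretic input. An effective form of Serre's open-image theorem for $E_0$ (using Masser--W\"ustholz in the non-CM case and CM theory in the CM case) guarantees that, for every positive integer $M$, the subset of scalars $\mu \in (\mathbb{Z}/M\mathbb{Z})^{\ast}$ for which some $\sigma \in \Gal(\bar{K}/L)$ acts on $E_0[M]$ as $[\mu]$ has index at most $C$ in $(\mathbb{Z}/M\mathbb{Z})^{\ast}$. Applying this with $M = Nd$ and transferring through $\phi$, pick $q \in E_0(\bar{K})$ with $\phi(q) = p$, so that $q \in E_0[Nd]$; since $\phi$ is defined over $L$, one computes
\[ \sigma(p) = \sigma(\phi(q)) = \phi(\sigma(q)) = \phi(\mu q) = \mu p. \]
Distinct residues $\mu \pmod{N}$ yield distinct conjugates $\sigma(p) = \mu p$, because $\ord(p) = N$, so the $\Gal(\bar{K}/L)$-orbit of $p$ has cardinality at least $\varphi(N)/C$.

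For the matching upper bound, observe that every $\Gal(\bar{K}/L)$-conjugate of $p$ lies on $\mathcal{C}$ and, since $\lambda_0 \in L$, is sent by $\pi$ to $\lambda_0$ itself; hence the whole orbit lies in $\mathcal{C}_{\lambda_0}$. Because $\mathcal{E}_{\lambda_0} \subset \mathbb{P}^2_{\bar{K}}$ is a plane cubic and $\mathcal{C}_{\lambda_0}$ is cut out on it by forms of degree at most $D_2$, none of which vanishes identically on $\mathcal{E}_{\lambda_0}$ (as every component of $\mathcal{C}$ surjects onto $Y(2)_K$), B\'ezout in $\mathbb{P}^2_{\bar{K}}$ yields $|\mathcal{C}_{\lambda_0}(\bar{K})| \leq 3 D_2$. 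Combining the two bounds gives $\varphi(N) \leq 3 C D_2$, and an elementary lower bound on $\varphi(N)$ valid once $N \geq \exp(2^{18/5})$ converts this into $N \leq (3 C D_2)^4$; the other term in the maximum covers the case $N \leq \exp(2^{18/5})$.

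The main technical obstacle is the explicit bookkeeping through the two \emph{deep} inputs above: in the non-CM case the specific shape of $C$, with its absolute constant $\exp(1.9 \cdot 10^{10})$ and exponent $12395$, reflects the best available Masser--W\"ustholz-type constants for both the isogeny estimate and for the image of the mod-$\ell$ Galois representations of $E_0$. One must also verify that threading the homothety through an isogeny of degree $d$ introduces no further dependence on $N$, so that the index loss of $C$ in passing from the Galois action on $E_0[Nd]$ to that on $F[N]$ is uniform in $N$.
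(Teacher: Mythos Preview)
Your outline has a genuine gap: the quantities $d = \deg\phi$ and $[K(\lambda_0):K]$ cannot be bounded by any function of $E_0$, $K$, and the $D_i$ alone. As $\lambda_0$ ranges over the infinitely many parameters with $\mathcal{E}_{\lambda_0}$ isogenous to $(E_0)_{\bar K}$, both the minimal isogeny degree and $[K(\lambda_0):K]$ go to infinity (in the CM case these are class numbers of non-maximal orders; in the non-CM case $j(\mathcal{E}_{\lambda_0})$ runs over the full Hecke orbit of $j(E_0)$). The Masser--W\"ustholz isogeny theorem bounds the minimal isogeny degree in terms of the degree of a \emph{common} field of definition of $E_0$ and $F$, i.e.\ in terms of $[K(\lambda_0):\mathbb{Q}]$, so invoking it to bound $d$ before you control $[K(\lambda_0):K]$ is circular. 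Consequently your extension $L$, over which you want $\lambda_0$ and $\phi$ defined, need not have degree bounded by anything like $C$, and the index bound for scalars in $\Gal(\bar K/L)$ degenerates.

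The paper sidesteps this entirely: the key point is that one never needs to descend $\lambda_0$ or $\phi$ to a controlled field. If $\sigma\in\Gal(\bar K/K)$ acts as a homothety on $(E_0)_{\bar K,\tors}$, then $\sigma$ fixes \emph{every} finite subgroup of $(E_0)_{\bar K}$, in particular $\ker\varphi$ for \emph{any} isogeny $\varphi:(E_0)_{\bar K}\to \mathcal{E}_{\lambda_0}$; hence $\sigma(\varphi)=\psi\circ\varphi$ for some isomorphism $\psi$, and the Legendre $2$-level structure forces $\psi=\pm\id$, so $\sigma^{\circ 2}$ fixes both $\lambda_0$ and $\varphi$ and acts as the squared homothety on $\mathcal{E}_{\lambda_0,\tors}$. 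Thus one works over $K(\lambda_0)$ directly (not over $K$ or a fixed $L$), and the constant $C$ comes solely from Lombardo's (non-CM) and Bourdon--Clark's (CM) bounds on the index of homotheties for $E_0$ over $K$; no isogeny estimate is used. The squaring costs a factor $2^{\omega(N)}$ in the lower bound $[K(p):K(\lambda_0)]\ge \phi(N)/(2C\cdot 2^{\omega(N)})$, and it is this extra factor, not your cleaner $\phi(N)/C$, that produces the exponent $4$ in the final bound.
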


The two cases correspond to $E_0$ having CM or not, i.e. the endomorphism ring of $(E_0)_{\bar{K}}$ being larger than or isomorphic to $\mathbb{Z}$. The stable Faltings height that we use is normalized so that it is equal to the height $h_F$ in \cite{MR3225452}. However, the choice of normalization is relevant only for Theorems \ref{thm:effectiveisogenymm} and \ref{thm:effectiveisogenyml} and their proofs. By an easy modification of our proof, the exponent of $3CD_2$ in the upper bound can be improved to any $\kappa > 1$ at the expense of worsening the exponent $\frac{18}{5}$. The good quality of the upper bound in the CM case allows us to recover and make explicit (over $\bar{\mathbb{Q}}$) a result of Andr\'e from Lecture IV in \cite{Andre} in the case of the Legendre family: any non-torsion (multi)section of the Legendre family takes at most finitely many torsion values at CM arguments.

In Section \ref{sec:legendrecurve}, we also obtain a result of Mordell-Lang type. In order to formulate it, we define the height of a polynomial with algebraic coefficients as the (absolute logarithmic) height of the vector of its coefficients, seen as a point in projective space. See Definition 1.5.4 in \cite{MR2216774} for a definition of the height on projective space.

\begin{thm}\label{thm:effectiveisogenyml}
Let $\bar{\mathbb{Q}}$ denote a fixed algebraic closure of $\mathbb{Q}$. Let $\mathcal{C} \subset \mathcal{E}$ be a (maybe reducible) curve such that each of its irreducible components surjects onto $Y(2)$. Suppose that $\mathcal{C}$ is defined in $\mathcal{E} \subset \mathbb{P}_{\mathbb{Q}}^1 \times_{\mathbb{Q}} \mathbb{P}_{\mathbb{Q}}^2$ by bihomogeneous polynomials with coefficients in $\mathbb{Q}$ of bidegree at most $(D_1,D_2) \in \mathbb{N}^2$ and height at most $\mathcal{H}$. Let $E_0$ be an elliptic curve, defined over $\mathbb{Q}$, and let $h(E_0)$ denote its stable Faltings height.

Set $\gamma_1 = 12698$, $\gamma_2 = 2.2\cdot 10^{10}$, and $\gamma_3 = 26471$. Suppose that $p \in \mathcal{C}(\bar{\mathbb{Q}})$ satisfies $p = \varphi(q)$ for some isogeny $\varphi: (E_0)_{\bar{\mathbb{Q}}} \to \mathcal{E}_{\pi(p)}$ with cyclic kernel and a non-torsion point $q \in E_0(\bar{\mathbb{Q}})$ in the divisible hull of $E_0(\mathbb{Q})$. Then
\[ \deg \varphi \leq \max\{2,h(E_0)\}^{\gamma_1}\max\{D_1,D_2,\mathcal{H}\}^{6}\]
and there exists a natural number $N$ such that $Nq \in E_0(\mathbb{Q})$ and
\[ N \leq \exp(\gamma_2)\max\{1,h(E_0)\}^{\gamma_3}\max\{D_1,D_2,\mathcal{H}\}^{9}.\]
\end{thm}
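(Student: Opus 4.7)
The plan is to adapt the Galois-homothety technique foregrounded in the introduction and combine it with effective Kummer theory for elliptic curves and the explicit isogeny estimates of Gaudron--R\'emond. Throughout, set $d=\deg\varphi$, let $N$ be the minimal positive integer with $Nq\in E_0(\mathbb{Q})$, and fix a number field $L$ containing both the residue field of $\pi(p)$ and a field of definition of $\varphi$; the degree $[L:\mathbb{Q}]$ is polynomially controlled by $d$, $h(E_0)$ and $[\mathbb{Q}(\pi(p)):\mathbb{Q}]$ via standard bounds on fields of definition of cyclic subgroups of $E_0[d]$. The central geometric input is that $\mathcal{C}$ is defined over $\mathbb{Q}$, hence for every $\sigma\in\Gal(\bar{\mathbb{Q}}/L)$ the conjugate $\sigma(p)$ lies in the finite set $\mathcal{C}\cap\mathcal{E}_{\pi(p)}$, whose cardinality is bounded polynomially in $D_{1},D_{2}$ by multihomogeneous B\'ezout in $\mathbb{P}^{1}\times\mathbb{P}^{2}$.

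To bound $N$, I would write $\sigma(p)=\varphi(\sigma(q))=p+\varphi(\tau_\sigma)$ with $\tau_\sigma=\sigma(q)-q\in E_0[N]$. Since the fibers of $\varphi$ have size $d$, the Galois orbit of $q$ over $L$ satisfies $[L(q):L]\le d\,|\mathcal{C}\cap\mathcal{E}_{\pi(p)}|$. Against this I would apply a quantitative Kummer theorem for the non-torsion $q$ (in the Bashmakov--Ribet--Bertrand tradition, in the explicit form of David or Hindry--Ratazzi), producing a lower bound of the shape $[L(q):L]\ge c\,N^{2}\,[L:\mathbb{Q}]^{-B}\max(1,h(E_0))^{-A}$. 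Combining yields an inequality in which $N^{2}$ is polynomially bounded by $d$, the curve data $(D_{1},D_{2},\mathcal{H})$, $[L:\mathbb{Q}]$ and $h(E_0)$; the exponent $\gamma_{3}\approx 2\gamma_{1}$ in the final bound reflects the quadratic character of Kummer theory.

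Bounding $d$ is where the paper's main idea enters. By Serre's open image theorem in effective form (Masser--W\"ustholz in the non-CM case, explicit CM theory in the CM case), the subgroup $H\le\Gal(\bar{\mathbb{Q}}/L_{0})$ of elements acting on $E_0[d]$ as scalars has index bounded polynomially in $d$ and $h(E_0)$. Every such $\sigma$ stabilises the cyclic subgroup $\ker\varphi\subset E_0[d]$, so $\mathcal{E}_{\sigma(\pi(p))}\cong\mathcal{E}_{\pi(p)}$; since $Y(2)$ parametrises elliptic curves with full level-$2$ structure and the $j$-invariant determines $\lambda$ up to the $\mathfrak{S}_{3}$-action, $\sigma(\pi(p))$ takes at most six values, yielding $[\mathbb{Q}(\pi(p)):\mathbb{Q}]\ll d\cdot\max(1,h(E_0))^{O(1)}$. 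Feeding this into Gaudron--R\'emond's explicit isogeny bound---and using that for $E_0$ without CM a cyclic isogeny from $E_0$ is automatically minimal in its class, since $n\varphi_{\min}$ for $n>1$ has kernel containing $E_0[n]$ and so is non-cyclic---extracts the desired unconditional bound on $d$, with the curve data entering via a Silverman-type height comparison that controls the height (and, through Northcott, the degree) of $\pi(p)$.

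The hard part will be precisely this closing step: the three quantitative ingredients---effective Serre, cyclic-isogeny stability in the Legendre family, and Gaudron--R\'emond---must be combined so that what would otherwise be a circular inequality of the shape $d\lesssim d^{2}$ is actually resolved into a genuine bound on $d$, and the explicit constants $\gamma_{1},\gamma_{2},\gamma_{3}$ tracked through all three inputs. Once $d$ is controlled, the bound on $N$ follows by substitution into the Kummer inequality of the second step.
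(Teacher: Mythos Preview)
Your proposal diverges from the paper at the decisive step of bounding $\deg\varphi$, and the divergence is a genuine gap rather than an alternative route. The paper does not use Gaudron--R\'emond for this; it uses the height comparison announced in the introduction (the Lin--Wang strategy). One bounds the canonical height $\widehat{h}_{\mathcal{E}}(p)$ from below by $(\deg\varphi)\,\widehat{h}_{E_0}(q)=N_1^{-2}(\deg\varphi)\,\widehat{h}_{E_0}(q_r)\ge N_1^{-2}(\deg\varphi)\,h_0$, with $h_0=\min\{\widehat{h}_{E_0}(q'):q'\in E_0(\mathbb{Q})\setminus E_0(\mathbb{Q})_{\tors}\}$, and from above via arithmetic B\'ezout on the defining equations of $\mathcal{C}$ in the fibre over $\pi(p)$, together with Pazuki's estimate $h(j(\mathcal{E}_{\pi(p)}))\le h(j(E_0))+O(\log\deg\varphi)$. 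The resulting inequality $(\deg\varphi)\,h_0/N_1^2\le C_1+C_2\log\deg\varphi$ then resolves into a genuine bound on $\deg\varphi$, with $h_0^{-1}$ controlled by an explicit Lang-type lower bound. Your Gaudron--R\'emond approach cannot close this step in the CM case: a CM elliptic curve has cyclic self-isogenies of arbitrarily large degree (multiplication by a split prime of the CM order), so ``cyclic $\Rightarrow$ minimal'' fails and the minimal-isogeny bound says nothing about the given $\varphi$. This is exactly why the curve data $D_1,D_2,\mathcal{H}$ must appear in the bound for $\deg\varphi$, and why the membership $p\in\mathcal{C}$ has to be used through a height inequality rather than only through B\'ezout counting.

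The sequencing is also inverted, and this is what produces the circularity you flag. The paper first bounds the ``free part'' $N_1$ of $N$ \emph{independently of} $\deg\varphi$: working over $\mathbb{Q}(E_0[M_1])$ with $M_1$ divisible by $2\deg\varphi$, every Galois automorphism has its square fixing $\pi(p)$ and $\varphi$ (this is Lemma~\ref{lem:uniformhomothety}), while Lombardo--Tronto's Kummer bound $[\mathbb{Q}(q,E_0[M_1]):\mathbb{Q}(E_0[M_1])]\ge 2^{-126}N_1^2$ is uniform in $M_1$. Comparing with $[\mathbb{Q}(p):\mathbb{Q}(\pi(p))]\le 3D_2$ and using that $\ker\varphi$ is cyclic gives $N_1\le 2^{130}D_2$ with no $d$ in sight. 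Only then does the height argument bound $\deg\varphi$, and finally the torsion part $N_2$ is handled by the homothety method you outline. Your proposed order---bound $d$ first, then run Kummer over a field $L$ whose degree already depends on $d$---is precisely the circle that this decomposition and the uniformity of Lombardo--Tronto are designed to break.
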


Here we have to assume that the base field is $\mathbb{Q}$. All of our results for the Legendre family hinge on an effective version of Serre's Open Image Theorem for elliptic curves without CM, due to Lombardo in \cite{MR3437765}, as well as on an effective version of the analogous result for elliptic curves with CM, due to Bourdon and Clark in \cite{BourdonClark} (see Theorem \ref{thm:lombardo} below). As remarked by Bourdon and Clark, the latter result is a consequence of earlier work of Stevenhagen \cite{Stevenhagen}. Weaker results had been obtained earlier in the CM case by Lombardo in \cite{MR3766118} and by Eckstein in \cite{Eckstein}.

The proof of Theorem \ref{thm:effectiveisogenyml} follows the strategy used in \cite{MR3383643} of obtaining an upper and a lower bound for the height of a certain point such that the two bounds are incompatible for $\deg \varphi$ large enough. In \cite{MR3383643}, this strategy was applied to isogeny orbits of finitely generated groups. In order to be able to apply the same strategy to isogeny orbits of (certain) groups of finite rank, we use an explicit and uniform Kummer theoretic result of Lombardo and Tronto in \cite{LT21}, which is the reason for the restriction on the base field.

In Section \ref{sec:legendrevar}, we turn to higher-dimensional subvarieties of fibered powers of the Legendre family and obtain the following result. See Section \ref{sec:prelim} for our conventions concerning degrees. Unless explicitly stated otherwise, degrees of subvarieties of (base changes of) $\mathcal{E}^{(g)}$ are always taken with respect to (base changes of) its natural immersion in $\mathbb{P}^1_{\mathbb{Q}} \times_{\mathbb{Q}} \left(\mathbb{P}^2_{\mathbb{Q}}\right)^g$. 

\begin{thm}\label{thm:effectiveisogenymmposdim}
Let $K$ be a number field with a fixed algebraic closure $\bar{K}$. Let $g \in \mathbb{N}$ and let $\mathcal{V} \subset \mathcal{E}^{(g)}_K$ be a subvariety. Let $E_0$ be an elliptic curve, defined over $K$, and let $h(E_0)$ denote its stable Faltings height. Fix an algebraic closure $\overline{K(Y(2))}$ of $\bar{K}(Y(2))$ and let $\xi$ denote the geometric generic point of $Y(2)_{\bar{K}}$ with residue field $\overline{K(Y(2))}$.

Suppose that $p \in \mathcal{V}(\bar{K})$ is torsion on $\mathcal{E}^g_{\pi(p)}$ and $\mathcal{E}_{\pi(p)}$ is isogenous to $(E_0)_{\bar{K}}$. There exists an effective constant $\gamma(g)$, depending only on $g$, such that one of the following holds:
\begin{enumerate}
\item There exist a torsion point $q \in \mathcal{E}^g_{\xi}$ and an abelian subvariety $B$ of $\mathcal{E}^g_{\xi}$ such that $p \in \overline{q+B}(\bar{K})$ and $\overline{q+B} \subset \mathcal{V}$, where $\overline{q+B}$ denotes the Zariski closure in $\mathcal{E}^{(g)}_K$ of the image of $q+B$ under the natural morphism $\mathcal{E}^g_{\xi} \to \mathcal{E}^{(g)}_K$. The order of $q$ is bounded by $\max\{2,\deg \mathcal{V}\}^{\gamma(g)}$ and $\deg B \leq \max\{2,\deg \mathcal{V}\}^{\gamma(g)}$.
\item There exists an isogeny $\varphi: (E_0)_{\bar{K}} \to \mathcal{E}_{\pi(p)}$ with
\[ \deg \varphi \leq 2^{\max\{2,h(E_0),[K:\mathbb{Q}]\}^{\gamma(g)}}\max\left\{2,h(E_0),\deg \mathcal{V}\right\}^{\gamma(g)}.\]	
If $E_0$ has CM, the dependency on $h(E_0)$ can be omitted in the exponent.
\end{enumerate}
\end{thm}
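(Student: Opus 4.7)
The plan is to follow the Lang--Serre--Tate--Hindry strategy, exploiting Galois elements that act as homotheties on torsion, now adapted to the fibered-power setting and combined with a Bezout-style intersection argument on the fiber. The argument proceeds by a dichotomy on the degree $d$ of a minimal isogeny $\varphi \colon (E_0)_{\bar{K}} \to \mathcal{E}_{\pi(p)}$: if $d$ lies below the bound in conclusion (2), we are already done, so I assume that $d$ exceeds it and deduce conclusion (1).

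First, I would apply the effective open image theorem (Theorem \ref{thm:lombardo}) to $E_0$. This furnishes an integer $m_0$ of the shape $\max\{2,h(E_0),[K:\mathbb{Q}]\}^{C(g)}$ in the non-CM case and $\max\{2,[K:\mathbb{Q}]\}^{C(g)}$ in the CM case, together with a finite extension $L_0/K$ of controlled degree, such that for every $n$ coprime to $m_0$ every homothety $[a] \in (\mathbb{Z}/n\mathbb{Z})^{\times}$ is realized by some $\sigma \in \Gal(\bar{K}/L_0)$ on $E_0[n]$. Next, I enlarge $L_0$ to a field $L$ over which $\mathcal{V}$, $\pi(p)$, and $\ker\varphi$ are all defined (so $\varphi$ is defined over $L$ up to $\Aut(\mathcal{E}_{\pi(p)})$); here $[L:L_0]$ is controlled polynomially by $\deg\mathcal{V}$ and $d$, which is acceptable precisely because $d$ is assumed large.

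Second, I would produce many Galois conjugates of $p$ inside $\mathcal{V}(\bar{K})$. Let $N$ be the order of $p$. For $\sigma \in \Gal(\bar{K}/L)$ acting as $[a]$ on $E_0[Nd]$ with $\gcd(a,m_0) = 1$, the $g$-fold fibered power of $\varphi$ is defined over $L$ and has Galois-stable kernel, so conjugation by $\sigma$ commutes with it and yields $\sigma(p) = [a]p$. Since $\sigma$ also fixes $\pi(p)$, this means $[a]p \in \mathcal{V}_{\pi(p)}(\bar{K})$. The Chinese remainder theorem then produces at least $\phi(N)\prod_{\ell \mid m_0}(1-\ell^{-1})$ points of the form $[a]p$ inside $\mathcal{V}_{\pi(p)}$, where $\phi$ is Euler's totient.

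The third step is a Bezout-type estimate inside the fiber $\mathcal{E}^g_{\pi(p)}$, embedded via the fixed immersion in the base change of $\mathbb{P}^1 \times_{\mathbb{Q}} (\mathbb{P}^2_{\mathbb{Q}})^g$. Let $B \subset \mathcal{E}^g_{\pi(p)}$ be the largest abelian subvariety with $p+B \subset \mathcal{V}_{\pi(p)}$, and consider the image of $\mathcal{V}_{\pi(p)}$ in $\mathcal{E}^g_{\pi(p)}/B$. Using that $[a]^{\ast}$ expands degrees in a fixed embedding quadratically in $a$, intersection theory bounds the number of distinct $[a]p$ modulo $B$ by a polynomial in $\deg \mathcal{V}$ of exponent depending only on $g$. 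Comparing with the lower bound from the previous step and invoking $\phi(N) \gg N/\log\log N$, one forces $N \leq \max\{2,\deg\mathcal{V}\}^{\gamma(g)}$; a parallel Bezout bound on $B$ inside the fiber yields $\deg B \leq \max\{2,\deg\mathcal{V}\}^{\gamma(g)}$. Finally, I would spread the torsion coset $p+B \subset \mathcal{V}_{\pi(p)}$ out over a Zariski open neighborhood of $\pi(p)$ in $Y(2)_{\bar{K}}$: the rigidity of torsion sections and of abelian subschemes of $\mathcal{E}^{(g)}$ allows it to propagate to an abelian subscheme translated by a torsion section, whose geometric generic fiber is the desired $q+B' \subset \mathcal{V}_{\xi}$ with the required bounds.

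The main obstacle will be in the third step: carrying out the Bezout argument uniformly across fibers, keeping control on the degree of the quotient by $B$ and on its projective embedding so that the estimates depend only on $g$ and $\deg\mathcal{V}$. The delicate balance between the lower bound on the orbit size (which shrinks with $m_0$, hence with $h(E_0)$) and the upper Bezout bound (which grows with $N$) is what ultimately fixes the exponent $\gamma(g)$ and, via the interaction with $d$ in the choice of $L$, produces the doubly-exponential dependence on $h(E_0)$ visible in conclusion (2).
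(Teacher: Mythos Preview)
Your proposal has the right ingredients (Lombardo/Bourdon--Clark homothety constant, Hindry's orbit-versus-B\'ezout count) but the architecture fails in two places that are not repairable as stated.

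First, the final ``spread out'' step does not work. A torsion coset $p+B$ contained in the special fibre $\mathcal{V}_{\pi(p)}$ need not be the specialisation of any torsion coset of $\mathcal{V}_\xi$ contained in $\mathcal{V}$. The simplest counterexample is when $\dim\pi(\mathcal{V})=0$: then $\mathcal{V}_\xi$ is empty, yet $\mathcal{V}_{\pi(p)}$ can contain arbitrarily large cosets. Even when $\pi(\mathcal{V})=Y(2)_K$, the fibre over $\pi(p)$ may acquire extra components that are torsion cosets with no generic counterpart. The paper avoids spreading out entirely: it inducts on $(g,\dim\mathcal{V})$, and when $p$ lies on a positive-dimensional coset in its fibre, one shows instead that $p\in\mathcal{V}\cap(\mathcal{T}+\mathcal{V})$ for a well-chosen global torsion section $\mathcal{T}$, which is a strictly smaller subvariety of $\mathcal{V}$. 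Conclusion (1) is only ever asserted for cosets that genuinely live in the generic fibre.

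Second, your route to conclusion (2) and your bound in conclusion (1) both carry the wrong dependencies. You make a dichotomy on the minimal isogeny degree $d$, but $d$ is the quantity to be \emph{bounded}; in the paper the bound comes from the Gaudron--R\'emond isogeny estimate once $[K(\pi(p)):K]$ is controlled, and that control is obtained either trivially (when $\dim\pi(\mathcal{V})=0$) or by showing $\{p\}$ is an isolated component of $[N_p]^{-1}(\epsilon(Y(2)_K))\cap\mathcal{V}$. Your field enlargement to $L\supset K(\pi(p))$ would make the homothety index depend on $[K(\pi(p)):\mathbb{Q}]$, which is unknown; the paper's Lemma~\ref{lem:uniformhomothety} shows that the $K$-Galois homotheties already fix $\pi(p)$ and the isogeny, so no enlargement is needed. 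Finally, the order bound on $q$ in conclusion (1) must be free of $h(E_0)$ and $[K:\mathbb{Q}]$. Your arithmetic orbit count imports those parameters through $m_0$; the paper instead combines David--Philippon's bound on the number of maximal torsion cosets in $\mathcal{V}_\xi$ with a purely geometric lower bound for $[\bar{K}(Y(2))(q):\bar{K}(Y(2))]$, which depends on nothing but the order of $q$.
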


Note that $q$ and $B$ in case (1) in Theorem \ref{thm:effectiveisogenymmposdim} are controlled by $g$ and $\deg \mathcal{V}$ alone; there is no dependency on the field of definition $K$. Recall that a subvariety of an abelian variety is called a torsion coset if it is a translate of an abelian subvariety by a torsion point. In order to bound the order of $q$ in terms of only $g$ and $\deg \mathcal{V}$, we apply an upper bound for the number of maximal torsion cosets contained in $\mathcal{V}_\xi$, due to David and Philippon in \cite{DavidPhilippon2007}, together with lower bounds for the degree of a torsion point of $\mathcal{E}^g_{\xi}$ over $\bar{K}(Y(2))$ in terms of its order.

On the other hand, the bound for the degree of the isogeny in case (2) must clearly involve $[K:\mathbb{Q}]$. It is not clear whether the dependency on $h(E_0)$ here and in the bound for the order of $p$ in Theorem \ref{thm:effectiveisogenymm} is also necessary; Coleman's conjecture with an upper bound that is polynomial in the degree of the number field would yield a bound that is independent of $h(E_0)$ (see Proposition 2.13 in \cite{MR3815154}, Section 2 of \cite{MR3437765}, and Th\'eor\`eme 1.2 in \cite{R17}). In order to bound the degree of the isogeny, we use a result of Gaudron-R\'{e}mond in \cite{MR3225452}, which improves and makes explicit earlier results of Masser-W\"ustholz in \cite{MR1037140} and \cite{MR1217345}. The above-mentioned results of Lombardo in \cite{MR3437765} and Bourdon-Clark in \cite{BourdonClark} are again essential for the proof in case (2).

Let us call a fiber $\mathcal{V}_{\pi(p)}$ for a point $p$ that does not fall under case (1) an exceptional fiber. One can then try to bound, independently of the field of definition of $\mathcal{V}$, the number of exceptional fibers as well as the number of maximal torsion cosets in each exceptional fiber. Note that the degree of such a maximal coset can be bounded in terms of only $g$ and $\deg \mathcal{V}$ thanks to Theorem 1 in \cite{MR587337}.

Combining Pila's results in \cite{MR3164515} with automatic uniformity in the form of Theorem 2.4 in \cite{MR2097105} (see also Corollary 3.5.9 in \cite{MR1854232}) seems to yield such bounds that depend only on $g$, $(E_0)_{\bar{K}}$, and $\deg \mathcal{V}$ (in an unspecified way). Using the homothety approach, we have not been able to prove such a bound for the number of exceptional fibers. We can however establish bounds for the number of maximal torsion cosets in each exceptional fiber that depend on the field of definition of $E_0$ and its stable Faltings height, but are independent of the field of definition of $\mathcal{V}$. For this, we combine our observations in Section \ref{sec:galmar} with the work of Galateau-Mart\'inez \cite{GT17} to make their result uniform across isogeny classes. We obtain the following theorem:

\begin{thm} \label{thm:countingacrossisogenyclass}
Let $K$ be a number field with a fixed algebraic closure $\bar{K}$ and let $A_0$ denote an abelian variety of dimension $g$ defined over $K$. There exists a constant $C = C(A_0,K)$ such that the following holds:

Let $A$ be an abelian variety, defined over $\bar{K}$, that is isogenous to $(A_0)_{\bar{K}}$. Suppose that $A$ is embedded in some $\mathbb{P}_{\bar{K}}^N$ as a projectively normal subvariety by means of the third tensor power of a symmetric ample line bundle. Let $V \subset A$ be a positive-dimensional subvariety. Let $\delta(V)$ denote the smallest natural number $d$ such that $V$ is the intersection of hypersurfaces of $A$ which have degree at most $d$. For $j = 0,\hdots,\dim V$, let $V^j_{\tors}$ denote the union of all $j$-dimensional components of the Zariski closure of the set of torsion points of $A$ that lie on $V$. Then
\[ \deg(V^j_{\tors}) \leq CN^{(g-j)\dim V}\deg(A)\delta(V)^{g-j}.\]
\end{thm}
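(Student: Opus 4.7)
The plan is to apply the Galateau-Mart\'{i}nez bound of \cite{GT17} directly to the pair $(A,V)$ and then to use the homothety observation from Section \ref{sec:galmar} to show that the constant appearing there can be made uniform across the isogeny class of $(A_0)_{\bar K}$. For a single polarized abelian variety over $\bar K$, Galateau-Mart\'{i}nez yield an inequality of the desired shape
\[ \deg(V^j_{\tors}) \leq C'(A)\,N^{(g-j)\dim V}\deg(A)\,\delta(V)^{g-j}, \]
where the constant $C'(A)$ depends on $A$ only through a Galois-theoretic invariant quantifying how small the Galois orbit, over a field of definition of $A$, of a torsion point $t \in A(\bar K)$ can be relative to the order of $t$. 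The theorem will thus follow if this invariant can be bounded in terms of $(A_0,K)$ alone.

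For this, I would invoke Serre's theorem (\cite[No.~136, Th\'{e}or\`{e}me~2']{MR1730973}) applied to $(A_0)_{K}$. It produces an integer $n_{0}$ and a finite extension $K_{0}/K$, both depending only on $(A_0,K)$, such that for every integer $m$ coprime to $n_{0}$ and every $\lambda \in (\mathbb{Z}/m\mathbb{Z})^{*}$ there exists $\sigma_{\lambda} \in \Gal(\bar K/K_{0})$ acting on $(A_{0})_{\bar K}[m]$ as the scalar $\lambda \cdot \id$. As recalled in the introduction, every such $\sigma_{\lambda}$ preserves each finite subgroup $H \subset (A_{0})_{\bar K}$, and hence descends through the quotient $(A_{0})_{\bar K} \to (A_{0})_{\bar K}/H \cong A$ to a map on $A[m]$ equal to multiplication by $\lambda$. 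This supplies, uniformly across the isogeny class, enough ``scalar'' Galois symmetries of $A(\bar K)_{\tors}$ to force the Galois orbit of any torsion point of $A$ of order $m$ coprime to $n_{0}$ to have size at least a uniform power of $m$; the finitely many primes dividing $n_{0}$ are then absorbed into the constant by a standard argument (possibly enlarging $K_{0}$).

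The principal obstacle, which is essentially bookkeeping, is to identify precisely which invariant of $A$ the constant $C'(A)$ of \cite{GT17} depends on and to verify that it is controlled by the uniform orbit bound produced above. In addition, one must track the various fields of definition involved (those of $A$, of the finite subgroup $H$, of the chosen isogeny from $(A_{0})_{\bar K}$, and of $V$) so that the final constant depends only on $(A_0,K)$ and not on the particular presentation of $A$ as a quotient of $(A_{0})_{\bar K}$. Once the explicit dependence of $C'(A)$ is extracted from \cite{GT17}, both tasks should be routine.
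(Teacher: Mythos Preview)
Your overall approach is the same as the paper's: reduce to Theorem~4.5 of \cite{GT17} and show that the constant there, which depends on a ``homothety exponent'' for the Galois action on $A_{\tors}$, can be bounded in terms of $(A_0,K)$ alone via Serre's theorem and the observation that homotheties descend through quotients. That part is fine.

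There is, however, a genuine gap. The constant in \cite{GT17} requires a field $L$ over which \emph{both} $A$ and its projective embedding can be defined, and over which the homotheties are realized in $\Gal(\bar K/L)$. Your list of fields to track (those of $A$, of $H$, of the isogeny, of $V$) omits the line bundle giving the embedding. This is not mere bookkeeping: a priori the symmetric ample line bundle $\mathcal{L}$ on $A$ could have a field of definition much larger than that of $A$, and you need to show that $\mathcal{L}$ is defined over an extension of $L$ of degree bounded in terms of $g$ only. The paper handles this by showing that the polarization $\lambda$ induced by $\mathcal{L}$ is defined over a bounded extension (via R\'emond's bound on fields of definition of homomorphisms), that $\mathcal{L}^{\otimes 4}$ agrees with a line bundle built from $\lambda$ and the Poincar\'e bundle, and hence that any Galois conjugate $\mathcal{L}'$ of $\mathcal{L}$ over this extension satisfies $\mathcal{L}\otimes\mathcal{L}'^{\otimes(-1)} \in \hat{A}[4]$, leaving at most $4^{2g}$ possibilities. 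Without this step your constant could still depend on the particular embedding of $A$, not just on $(A_0,K)$.

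A second, smaller point: the argument in \cite{GT17} passes from $A$ to $A/\Stab(V)$ in the course of the induction, so you need the homothety bound not just for $A$ itself but for all successive quotients. The paper makes this explicit by proving the statement for arbitrary quotients $B$ of $(A_0)_{\bar K}$ by algebraic subgroups (not just finite ones), with the quotient map defined over the same $L$. Your sketch implicitly covers this but it is worth saying.
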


After the completion of this article, the uniform Manin-Mumford conjecture was proved for curves by K\"uhne in \cite{Kuehne21} and in general by Gao, Ge, and K\"uhne in \cite{GGK}, both building on earlier work by Dimitrov, Gao, and Habegger \cite{DGH}; however, these results are not explicit. See \cite{DKY} for another recent uniform Manin-Mumford result, where the subvariety is a curve, but the abelian variety is not restricted to a fixed isogeny class. If $A_0 = E_0^g$ for some elliptic curve $E_0$ over $K$, then it follows from our proof of Theorem \ref{thm:countingacrossisogenyclass} together with the results in \cite{MR3437765} and \cite{BourdonClark} (see Theorem \ref{thm:lombardo} below) that $C(A_0,K)$ in Theorem \ref{thm:countingacrossisogenyclass} can be bounded effectively in terms of only $g$, $[K:\mathbb{Q}]$, and $h(E_0)$ and that the dependency on $h(E_0)$ can be omitted if $E_0$ has CM. Again, Coleman's conjecture with an upper bound that is polynomial in the degree of the number field would yield a bound that is independent of $h(E_0)$ also if $E_0$ does not have CM (see Proposition 2.13 in \cite{MR3815154} and Section 2 of \cite{MR3437765}). The recent work \cite{GM20} by Galateau and Mart\'{i}nez surveys what is known in general about the constant $C(A_0,K)$.

\section{Preliminaries}\label{sec:prelim}

Euler's phi function and the function counting the number of prime divisors will be denoted by $\phi$ and $\omega$ respectively. We will use the profinite integers $\hat{\mathbb{Z}} = \varprojlim \mathbb{Z}/n\mathbb{Z}$.

If $A$ is an abelian variety, then $0_A$ denotes its neutral element, $A_{\tors}$ denotes the set of its torsion points (over a fixed algebraic closure of its field of definition), $A[N]$ denotes the set of elements of $A_{\tors}$ of order dividing $N \in \mathbb{N}$, and $\hat{A}$ denotes the dual abelian variety. Let $\iota: A \to A$ denote the inversion morphism; a line bundle $\mathcal{L}$ on $A$ is called symmetric if $\iota^{\ast}\mathcal{L} \simeq \mathcal{L}$ and antisymmetric if $\iota^{\ast}\mathcal{L} \simeq \mathcal{L}^{\otimes(-1)}$. For each $N \in \mathbb{N}$, we can multiply elements of $A[N]$ by elements of $\mathbb{Z}/N\mathbb{Z}$ and thus we can multiply elements of $A_{\tors}$ and $\varprojlim A[N]$ by elements of $\hat{\mathbb{Z}}$. If $V \subset A$ is a subvariety, then $\Stab(V,A)$ denotes the stabilizer of $V$ in $A$; it is an algebraic subgroup of $A$. The $j$-invariant of an elliptic curve $E$ will be denoted by $j(E)$.

We use the (absolute logarithmic) height $h: \mathbb{P}^n(\bar{\mathbb{Q}}) \to [0,\infty)$ ($n \in \mathbb{N}$) as defined in Definition 1.5.4 in \cite{MR2216774}. Via the Segre embedding, this induces a height on any multiprojective space and on any open subset of a multiprojective variety over $\bar{\mathbb{Q}}$. The height however depends on the (multi-)projective embedding. The height $h(P)$ of a polynomial $P$ with algebraic coefficients is the height of the vector of its coefficients, seen as a point in projective space. We refer to Section 1.5 of \cite{MR2216774} for fundamental properties of the height.

If $A$ is an abelian variety over $\bar{\mathbb{Q}}$, embedded in some projective space through use of a symmetric line bundle, one can define a canonical (logarithmic) height $\widehat{h}_A: A(\bar{\mathbb{Q}}) \to [0,\infty)$ associated to the embedding. In particular, this applies if $A$ is an elliptic curve embedded in $\mathbb{P}^2_{\bar{\mathbb{Q}}}$ by means of a Weierstrass model. For the definition and properties of the canonical height, we refer to Sections 9.2 and 9.3 of \cite{MR2216774}.

Let $K$ be a field (as always of characteristic $0$). The Zariski closure of a subset $\Sigma$ of a variety $V$ over $K$ is denoted by $\overline{\Sigma}$. We use the following general convention: Let $n_1$, \dots, $n_k \in \mathbb{N}$. If $U$ is an open Zariski dense subset of a subvariety $V = \overline{U}$ of the multiprojective space $\mathbb{P}^{n_1}_K \times_K \cdots \times_K \mathbb{P}^{n_k}_K$, then $\deg U$ denotes the degree of the image of $V$ under the Segre embedding
\[ \mathbb{P}^{n_1}_K \times_K \cdots \times_K \mathbb{P}^{n_k}_K \hookrightarrow \mathbb{P}^{(n_1+1)\cdots(n_k+1)-1}_K.\]
If several immersions of $U$ as an open Zariski dense subset of a multiprojective variety are in play, we will always specify with respect to which one we take the degree. The degree of an arbitrary subvariety is the sum of the degrees of its irreducible components; consequently, the degree of the empty set is defined to be $0$. Furthermore, the degree of a subvariety, defined over $K$, is equal to the degree of its base change to any algebraic closure of $K$. The degree of a subvariety of an abelian variety with respect to some projective embedding is invariant under translation by rational points of the abelian variety as the corresponding cycles are algebraically equivalent.

In the following, we record some results about degrees of multiprojective varieties that we are going to use several times in this article.

\begin{thm}[B\'ezout]\label{thm:bezout}
Let $K$ be a field, let $n_1,\hdots, n_k \in \mathbb{N}$, let $U \subset \mathbb{P}^{n_1}_K \times_K \cdots \times_K \mathbb{P}^{n_k}_K$ be an open subset, and let $V,W$ be subvarieties of $U$. Then $\deg(V \cap W) \leq (\deg V)(\deg W)$.
\end{thm}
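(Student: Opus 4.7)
The plan is to reduce the statement to the classical B\'ezout inequality in a single projective space via the Segre embedding. Let $\sigma \colon \mathbb{P}^{n_1}_K \times_K \cdots \times_K \mathbb{P}^{n_k}_K \hookrightarrow \mathbb{P}^N_K$ with $N = \prod_{i=1}^k (n_i+1) - 1$ denote that embedding, and let $\overline{V}$, $\overline{W}$ be the Zariski closures of $V$, $W$ inside the ambient multiprojective space. By the convention on degrees recalled in Section~\ref{sec:prelim}, we have $\deg \sigma(\overline{V}) = \deg V$ and $\deg \sigma(\overline{W}) = \deg W$, since $V$ (resp.\ $W$) is open and Zariski dense in $\overline{V}$ (resp.\ $\overline{W}$).

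Next I would compare $V \cap W$, computed inside $U$, with $\overline{V} \cap \overline{W}$, computed inside the full multiprojective space. Because $V$ is closed in $U$, we have $V = \overline{V} \cap U$, and likewise for $W$; hence every irreducible component $Z$ of $V \cap W$ is open and Zariski dense in a uniquely determined irreducible component $\overline{Z}$ of $\overline{V} \cap \overline{W}$, and distinct components of $V \cap W$ produce distinct components $\overline{Z}$. By the degree convention, $\deg Z = \deg \overline{Z}$, so summing over components yields $\deg(V \cap W) \leq \deg(\overline{V} \cap \overline{W})$. The task thus reduces to bounding the right-hand side by $\deg V \cdot \deg W$.

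Since $\sigma$ is a closed immersion, $\sigma(\overline{V} \cap \overline{W}) = \sigma(\overline{V}) \cap \sigma(\overline{W})$, and it remains to apply the classical B\'ezout inequality to the reduced closed subvarieties $\sigma(\overline{V}), \sigma(\overline{W}) \subset \mathbb{P}^N_K$. Decomposing each into irreducible components and expanding the intersection bilinearly, the inequality for reducible varieties follows from the irreducible case by summing. The only substantive input is therefore B\'ezout's inequality for two irreducible closed subvarieties of a projective space, allowing components of possibly excess dimension in the intersection; this is the classical form of the inequality (due in this generality to Fulton and Heintz) and constitutes the main, and essentially only, non-formal step of the proof. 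Everything else is bookkeeping with the Segre embedding and the convention on degrees from Section~\ref{sec:prelim}.
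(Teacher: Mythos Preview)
Your proof is correct and follows essentially the same approach as the paper's: reduce from $U$ to the full multiprojective space by comparing components of $V\cap W$ with those of $\overline{V}\cap\overline{W}$, then use the Segre embedding to reduce to the classical B\'ezout inequality in a single projective space (the paper cites Example~8.4.6 in Fulton's \emph{Intersection Theory} for the irreducible case). The only cosmetic difference is the order in which you introduce the Segre map, and you spell out the reduction to irreducible components a bit more explicitly than the paper does.
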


\begin{proof}
By our definition of the degree, we have $\deg(V \cap W) = \deg(\overline{V \cap W})$, $\deg V = \deg \overline{V}$, and $\deg W = \deg \overline{W}$. As $V = \overline{V} \cap U$ and $W = \overline{W} \cap U$, we find that $V \cap W = \overline{V} \cap \overline{W} \cap U$. It follows that the irreducible components of $\overline{V \cap W}$ are precisely the irreducible components of $\overline{V} \cap \overline{W}$ that have non-empty intersection with $U$. We deduce that $\deg(\overline{V \cap W}) \leq \deg(\overline{V} \cap \overline{W})$, so it suffices to prove the theorem for $U = \mathbb{P}^{n_1}_K \times_K \cdots \times_K \mathbb{P}^{n_k}_K$, $V = \overline{V}$, and $W = \overline{W}$.

In the case where $k = 1$ and $V$ and $W$ are irreducible, this then follows from Example 8.4.6 in \cite{MR732620}. Using the Segre embedding, we directly deduce the general case.
\end{proof}

\begin{thm}[B\'ezout, second version]\label{thm:bezoutii}
Let $K$ be a field, let $d, n_1,\hdots, n_k \in \mathbb{N}$, and let $V \subset \mathbb{P}^{n_1}_K \times_K \cdots \times_K \mathbb{P}^{n_k}_K$ be a subvariety. Let $W$ be an irreducible component of the intersection of $V$ with the common zero locus of a finite set of multihomogeneous polynomials of multidegree at most $(d,d,\hdots,d)$. Then
\[ \deg W \leq (\deg V)d^{\dim V-\dim W}. \]
\end{thm}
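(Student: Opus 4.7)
The plan is to reduce to the case of a single projective space via the Segre embedding, where the claim is the refined Bézout inequality bounding the degree of any irreducible component of an intersection. That refinement is proved by induction on $\dim V - \dim W$, peeling off one hypersurface at a time and using the first version of Bézout (Theorem \ref{thm:bezout}) at each step.

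First I would dispatch the Segre reduction. By our convention, $\deg V$ and $\deg W$ are computed as degrees of Segre images in $\mathbb{P}^N_K$ with $N=(n_1+1)\cdots(n_k+1)-1$, and a multihomogeneous form of multidegree exactly $(d,\ldots,d)$ pulls back from a degree-$d$ form on $\mathbb{P}^N_K$. For a polynomial $f$ of multidegree $(d_1,\ldots,d_k)$ with $d_j\le d$, I would replace $f$ by the finite family obtained by multiplying $f$ with every monomial of multidegree $(d-d_1,\ldots,d-d_k)$; since those multiplying monomials share no common zero on the multiprojective space, this family has the same zero locus as $f$, while now each polynomial has multidegree exactly $(d,\ldots,d)$. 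Thus we may assume we are intersecting a subvariety $V\subset\mathbb{P}^N_K$ with zero loci of degree-$d$ hypersurfaces. Furthermore, writing $V=\bigcup_j V_j$ into irreducible components, $W$ lies in some $V_{j_0}$ and is a component of $V_{j_0}\cap Z(f_1,\ldots,f_r)$; since $\deg V_{j_0}\le\deg V$ and $\dim V_{j_0}\le\dim V$, it suffices to prove the bound assuming $V$ is irreducible.

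For the induction, set $e=\dim V-\dim W\ge 0$. If $e=0$, then $W\subset V$ is irreducible of dimension $\dim V$, so $W=V$ and $\deg W=\deg V$. If $e\ge 1$, at least one $f_i$ does not vanish identically on $V$, for otherwise $W\subset V\cap Z(f_1,\ldots,f_r)=V$ would force $\dim W=\dim V$. For such an $f_i$, every irreducible component of $V\cap Z(f_i)$ has dimension $\dim V-1$, and by Theorem \ref{thm:bezout} their degrees sum to at most $d\cdot\deg V$. Let $V'$ be the component containing $W$; then $\deg V'\le d\cdot\deg V$ and $\dim V'=\dim V-1$. The key check is that $W$ is still a component of $V'\cap Z(\{f_j\}_{j\ne i})$: any irreducible subvariety of the latter containing $W$ also lies inside $V\cap Z(f_1,\ldots,f_r)$, which by maximality of $W$ forces equality. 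Applying the inductive hypothesis to this intersection gives
\[ \deg W\le (\deg V')\,d^{\dim V'-\dim W}\le (d\cdot\deg V)\,d^{\dim V-1-\dim W}=(\deg V)\,d^{\dim V-\dim W}.\]

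The argument is largely bookkeeping; the only non-formal points are the Segre padding trick (to accommodate the ``multidegree at most'' hypothesis while keeping the zero locus) and the verification that $W$ remains an irreducible component after one hypersurface is absorbed into the ambient variety. Both are straightforward, so I do not anticipate a serious obstacle.
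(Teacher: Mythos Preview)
Your proof is correct and follows essentially the same route as the paper's: reduce to a single projective space via the Segre embedding, then peel off one degree-$d$ hypersurface at a time using Theorem \ref{thm:bezout}, inducting on $\dim V-\dim W$. The only cosmetic difference is that you perform the monomial padding to multidegree exactly $(d,\ldots,d)$ once at the outset, whereas the paper implicitly invokes it at each step when producing a degree-at-most-$d$ hypersurface containing $W$ but not $V_0$.
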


\begin{proof}
We identify subvarieties of $\mathbb{P}^{n_1}_K \times_K \cdots \times_K \mathbb{P}^{n_k}_K$ with their images under the Segre embedding and iterate the following step: We first choose an irreducible component $V_0$ of $V$ that contains $W$. If $V_0 = W$, we stop. If $V_0 \neq W$, our hypothesis on $W$ implies that we can find a hypersurface of degree at most $d$ in $\mathbb{P}^{\prod_{i=1}^{k}{(n_i+1)}-1}_K$ that contains $W$, but not $V_0$. We then replace $V$ by the intersection of $V_0$ with such a hypersurface. After at most $\dim V - \dim W$ steps, we end up with a variety that contains $W$ as an irreducible component. The theorem then follows from Theorem \ref{thm:bezout}.
\end{proof}

\begin{thm}\label{thm:faltings}
Let $K$ be a field, let $n_1,\hdots, n_k \in \mathbb{N}$, and let $V \subset \mathbb{P}^{n_1}_K \times_K \cdots \times_K \mathbb{P}^{n_k}_K$ be a subvariety. Then $V$ is defined in $\mathbb{P}^{n_1}_K \times_K \cdots \times_K \mathbb{P}^{n_k}_K$ by multihomogeneous polynomials of multidegree at most $(\deg V,\deg V,\hdots,\deg V)$.
\end{thm}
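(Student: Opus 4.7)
The plan is to reduce the multiprojective statement to its classical single-projective analogue via the Segre embedding. Set $M = \prod_{i=1}^{k}(n_i+1) - 1$ and let $\sigma: \mathbb{P}^{n_1}_K \times_K \cdots \times_K \mathbb{P}^{n_k}_K \hookrightarrow \mathbb{P}^M_K$ denote the Segre embedding. By the degree convention recalled just before the statement of the theorem, $d := \deg V = \deg \sigma(V)$, and $\sigma(V)$ is a closed subvariety of $\mathbb{P}^M_K$.

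The key input is then the classical fact that every closed subvariety $W \subset \mathbb{P}^M_K$ of degree $d$ is the common zero locus of homogeneous polynomials of degree at most $d$. For an irreducible $W$ of dimension $r$, the standard argument proceeds by showing that for every $\bar{K}$-point $p \notin W$, a generic linear subspace $L \ni p$ of dimension $M-r$ meets $W_{\bar{K}}$ in exactly $d$ points away from $p$ (by Theorem \ref{thm:bezout}), so that a degenerate hypersurface of degree $d$ consisting of $d$ suitably chosen hyperplanes, each through one of these $d$ points but none through $p$, separates $p$ from $W$; $K$-rationality is then recovered by Galois averaging. The possibly reducible case reduces to the irreducible one by taking all products of one defining equation for each component, yielding polynomials of degree at most $\sum_i \deg W_i = d$.

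Pulling back along $\sigma$ completes the proof: each homogeneous form of degree $d' \leq d$ on $\mathbb{P}^M_K$ becomes, under the substitution $Z_{i_1,\ldots,i_k} \mapsto x^{(1)}_{i_1} \cdots x^{(k)}_{i_k}$ defining $\sigma$, a multihomogeneous polynomial of multidegree exactly $(d',d',\ldots,d')$ in the original multiprojective coordinates, which has multidegree at most $(d,d,\ldots,d)$ componentwise. Their common zero locus on $\mathbb{P}^{n_1}_K \times_K \cdots \times_K \mathbb{P}^{n_k}_K$ is exactly $V$, as required.

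The main obstacle I anticipate is precisely the classical single-projective result. This is however standard (see for instance Mumford's \emph{Varieties Defined by Quadratic Equations}, or again an application of B\'ezout as in Theorem \ref{thm:bezout} combined with the projection-from-a-point argument), so I would be content to cite it. Everything else is a routine unwinding of the Segre embedding, which the paper has already absorbed into its definition of degree for multiprojective subvarieties.
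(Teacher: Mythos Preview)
Your proposal is correct and follows essentially the same route as the paper: reduce to the single-projective case via the Segre embedding, invoke the classical result there, and pull back. The paper simply cites Proposition 2.1 in \cite{MR1109353} (Faltings) for the $k=1$ equidimensional case rather than sketching the projection-from-a-point argument, but the structure of the two arguments is the same.
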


\begin{proof}
In the case where $k = 1$ and $V$ is equidimensional, this follows from Proposition 2.1 in \cite{MR1109353}. Using the Segre embedding, we then directly deduce the general case.
\end{proof}

\begin{lem}\label{lem:degproj}
Let $K$ be a field, let $n_1,\hdots,n_k \in \mathbb{N}$, and let $U$ be an open Zariski dense subset of a subvariety $V \subset \mathbb{P}^{n_1}_K \times_K \cdots \times_K \mathbb{P}^{n_k}_K$. Let $\pi$ be a projection to some collection of factors of the product. Then $\deg \overline{\pi(U)} \leq \deg U$.
\end{lem}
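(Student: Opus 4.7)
Since $U$ is Zariski dense in $V := \overline{U}$, we have $\overline{\pi(U)} = \overline{\pi(V)} =: W$, so by the conventions on degrees, the claim reduces to showing $\deg W \leq \deg V$, both taken via the respective Segre embeddings $s$ and $s'$. By additivity over irreducible components I may assume that $V$, and hence $W$, is irreducible; write $d = \dim V$ and $d' = \dim W \leq d$. Let $\mathcal{L}_s = s^* \mathcal{O}(1) \cong \mathcal{O}(1,\ldots,1)$ on the source multiprojective space and $\mathcal{L}_{s'} = s'^* \mathcal{O}(1) \cong \mathcal{O}(1,\ldots,1)$ on the target. The pullback $\pi^* \mathcal{L}_{s'}$ has multidegree with $1$'s in the projected positions $i_1, \ldots, i_m$ and $0$'s elsewhere, so $\mathcal{L}_s \otimes (\pi^* \mathcal{L}_{s'})^{-1}$ is pulled back via the complementary projection from $\mathcal{O}(1,\ldots,1)$ on the omitted factors, hence is globally generated and in particular nef.

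Consider first the equidimensional case $d = d'$. Then $\pi|_V \colon V \to W$ is generically finite of some degree $e \geq 1$, and the projection formula yields
\[
c_1(\pi^* \mathcal{L}_{s'})^{d} \cdot V \;=\; e \cdot c_1(\mathcal{L}_{s'})^{d} \cdot W \;=\; e \cdot \deg W \;\geq\; \deg W.
\]
Writing $c_1(\mathcal{L}_s) = c_1(\pi^* \mathcal{L}_{s'}) + c_1\!\left(\mathcal{L}_s \otimes (\pi^* \mathcal{L}_{s'})^{-1}\right)$ as a sum of nef classes and expanding $c_1(\mathcal{L}_s)^d \cdot V$ binomially, every summand is a non-negative intersection number of nef classes against the effective cycle $V$, and one of these summands is $c_1(\pi^* \mathcal{L}_{s'})^d \cdot V$. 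Therefore $\deg V = c_1(\mathcal{L}_s)^d \cdot V \geq c_1(\pi^* \mathcal{L}_{s'})^d \cdot V \geq \deg W$.

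The remaining case $d > d'$ is the main obstacle, and I would handle it by reducing to the equidimensional case. A generic fiber of $\pi|_V$ has dimension exactly $d - d'$, so after cutting $s(V) \subset \mathbb{P}^N$ by $d - d'$ sufficiently generic hyperplanes $H_1, \ldots, H_{d - d'}$, the intersection $V' := s(V) \cap H_1 \cap \cdots \cap H_{d - d'}$ has pure dimension $d'$ and still meets each generic fiber of $\pi|_V$ (since $d - d'$ generic hyperplanes intersect any $(d-d')$-dimensional projective variety in a non-empty finite set). Some irreducible component $V''$ of $V'$ therefore has dimension $d'$ and satisfies $\overline{\pi(V'')} = W$. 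Theorem~\ref{thm:bezout} gives $\deg V'' \leq \deg V' \leq \deg V$, and applying the equidimensional case to $V''$ in place of $V$ yields $\deg W \leq \deg V'' \leq \deg V$, as required. The only delicate point in this step is the Bertini-type verification that a generic hyperplane section preserves both the dimension count and the dominance to $W$; this can be argued either by working over the generic point of $W$ and invoking generic flatness, or by inspecting the incidence variety in $V \times ((\mathbb{P}^N)^{\vee})^{d-d'}$.
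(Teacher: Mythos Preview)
Your proof is correct, but takes a different route from the paper. The paper's argument is quite short: after reducing to $V$ irreducible, it invokes a classical fact about linear projections in projective space --- for a linear projection $p\colon \mathbb{P}^n\setminus L \to \mathbb{P}^{n-l}$, the degree of the closure of $p(X\setminus L)$ is at most $\deg X$ --- and then observes that the multiprojective projection $\pi$ becomes such a linear projection after passing through the Segre embeddings on source and target. No case distinction on $\dim V$ versus $\dim W$ is needed, because the cited fact already absorbs it.

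Your approach is more self-contained. In the equidimensional case you use the decomposition $c_1(\mathcal{L}_s) = c_1(\pi^*\mathcal{L}_{s'}) + (\text{nef class})$ and non-negativity of nef intersection numbers, which is clean and conceptual. The reduction from $d>d'$ to $d=d'$ via generic hyperplane sections also works: the point you flag as delicate is in fact automatic, since each fiber $F_w$ is projective of dimension at least $d-d'$ (by upper semicontinuity), so $d-d'$ hyperplanes in $\mathbb{P}^N$ always meet it by the projective dimension theorem, and hence $V'$ surjects onto $W$ for \emph{every} choice of hyperplanes, not just generic ones. The trade-off is that your argument is longer and uses more machinery (nef positivity, projection formula, Bertini), whereas the paper's argument offloads everything to one cited inequality and a commutative diagram.
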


\begin{proof}
First of all, we have $\overline{\pi(U)} = \pi(V)$ since $\pi$ is closed. Since $\deg U = \deg V$ by our definition, we can assume without loss of generality that $U = V$. We can also assume that $K$ is algebraically closed and $V$ is irreducible.

We use the following fact: For a linear projection $p: \mathbb{P}^n_K\backslash L \to \mathbb{P}^{n-l}_K$ with center $L \subset \mathbb{P}^n_K$ of dimension $l-1$ and an irreducible subvariety $X \subset \mathbb{P}^n_K$, let $Y$ denote the Zariski closure of $p((\mathbb{P}^n_K\backslash L) \cap X)$. Then $\deg Y \leq \deg X$. For a proof of this fact, see \cite{MR1854232}, p. 55. Note that there $\bar{L}$ should be chosen such that $\bar{L} \cap \bar{V} \subset \theta(V\backslash C)$.

Let $S \subset \{1,\hdots,k\}$ be such that $\pi: \prod_{i=1}^{k}{\mathbb{P}^{n_i}_K} \to \prod_{i \in S}{\mathbb{P}^{n_i}_K}$. The lemma now follows from the above fact together with the following diagram:
\begin{equation*}
\begin{tikzcd}
\prod_{i=1}^{k}{\mathbb{P}^{n_i}_K} \arrow[r,hook] \arrow[d] & \mathbb{P}^{(n_1+1)\cdots(n_k+1)-1}_K \arrow[d,dashed] \\
\prod_{i \in S}{\mathbb{P}^{n_i}_K} \arrow[r,hook] & \mathbb{P}^{\prod_{i \in S}{(n_i+1)}-1}_K,
\end{tikzcd}
\end{equation*}
where the horizontal arrows are Segre embeddings and the right vertical arrow is a suitable linear coordinate projection, chosen such that the image of $V$ under the Segre embedding is not contained in its center.
\end{proof}

\section{Manin-Mumford with isogenies}\label{sec:mmisog}
 
In this section, we prove Theorem \ref{thm:isogenymaninmumford}. We recall the setting and set up some notation: We have a principally polarized abelian scheme $\pi: \mathcal{A} \to S$ of relative dimension $g$ over an irreducible variety $S$ over $\bar{\mathbb{Q}}$. In order to prove the theorem, we may replace $S$ by an open Zariski dense subset. Hence, we can and will assume without loss of generality that $S$ is affine. We have fixed an algebraic closure $\overline{\bar{\mathbb{Q}}(S)}$ of $\bar{\mathbb{Q}}(S)$ and $\xi$ is the geometric generic point of $S$ with residue field $\overline{\bar{\mathbb{Q}}(S)}$. We denote the zero section of $\mathcal{A}$ by $\epsilon$. We are also given a fixed abelian variety $A_0$ over $\bar{\mathbb{Q}}$.

We fix a number field $K$ over which $S$, $\mathcal{A}$ (together with its polarization), and $A_0$ are defined. In this section, we identify all varieties over $K$ with their base changes to $\bar{\mathbb{Q}}$ and ``irreducible" will always mean ``geometrically irreducible" when the base field is contained in $\bar{\mathbb{Q}}$ (unless explicitly specified otherwise). In particular, a homomorphism between two abelian varieties, both defined over some number field, is not assumed to be defined over the ground field and two abelian varieties, both defined over some number field, are called isogenous if they are isogenous over $\bar{\mathbb{Q}}$.

The natural morphism $\rho: S \to A_g$ to the coarse moduli space of principally polarized abelian varieties of dimension $g$, which is defined over $K$, satisfies $|\rho^{-1}(\rho(\eta))| < \infty$, where $\eta$ is the generic point of $S$. After maybe replacing $S$ by an open Zariski dense subset, we can and will assume without loss of generality that $\rho$ is quasi-finite with fibers of cardinality at most $M_1 \in \mathbb{N}$.

Let $\lambda$ denote the principal polarization on $\mathcal{A} \to S$, let $\hat{\mathcal{A}}$ denote the dual abelian scheme of $\mathcal{A}$, and let $\mathcal{P}$ denote the Poincar\'{e} line bundle on $\mathcal{A} \times_S \hat{\mathcal{A}}$. By Proposition 6.10 in \cite{MR1304906}, the polarization $2\lambda$ is induced by the line bundle $\mathcal{L} = (\id_{\mathcal{A}},\lambda)^{\ast}\mathcal{P}$ on $\mathcal{A}$. In Proposition 6.10 in \cite{MR1304906}, the abelian scheme $\mathcal{A}$ is assumed to be projective over $S$; this assumption is however unnecessary as it is only used to ensure that the dual abelian scheme $\hat{\mathcal{A}}$ exists, which is guaranteed by Theorem 1.9 in Chapter I of \cite{MR1083353}.

The restriction of $\mathcal{L}$ to each fiber of $\mathcal{A} \to S$ is symmetric by Theorem 8.8.4 in \cite{MR2216774}. The restrictions are also ample as $2\lambda$ is a polarization and ampleness of a line bundle on an abelian variety is preserved under algebraic equivalence. By Th\'eor\`{e}me 4.7.1 in \cite{MR217085} and Proposition 13.63 in \cite{MR2675155}, the line bundle $\mathcal{L}$ is relatively ample for $\pi$ as defined in Definition 13.60 in \cite{MR2675155}.

Since $S$ is affine, the line bundle $\mathcal{L}$ is ample. Thanks to Theorem II.7.6 in \cite{MR0463157}, there exists an immersion $\mathcal{A} \hookrightarrow \mathbb{P}_{\bar{\mathbb{Q}}}^{R_2} \times_{\bar{\mathbb{Q}}} S$ associated to the $l$-th tensor power of $\mathcal{L}$, all defined over $K$, for some $l \in \mathbb{N}$ large enough and some $R_2 \in \mathbb{N}$. As this immersion is proper, it is actually a closed embedding by \cite{stacks-project}, Tag 01IQ. In particular, $\mathcal{A}$ is projective over $S$. For $s \in S$, we denote by $\mathcal{A}_s$ the fiber of $\mathcal{A}$ over $s$ and by $\mathcal{L}_s$ the restriction of $\mathcal{L}$ to $\mathcal{A}_s$.

Since $S$ is affine, there is a closed embedding $S \hookrightarrow \mathbb{A}_{\bar{\mathbb{Q}}}^{R_1}$, defined over $K$, for some $R_1 \in \mathbb{N}$. By composing with the open immersion $\mathbb{A}_{\bar{\mathbb{Q}}}^{R_1} \hookrightarrow \mathbb{P}_{\bar{\mathbb{Q}}}^{R_1}$ and the Segre embedding, we obtain an immersion $\mathcal{A} \hookrightarrow \mathbb{P}_{\bar{\mathbb{Q}}}^{R_1R_2+R_1+R_2}$, also defined over $K$. The degree $\deg$ of a subvariety of $\mathcal{A}$ is defined to be the projective degree of the Zariski closure of its image under this immersion.

We include the immersion and the morphism $\rho: S \to A_g$ in the data associated to $\mathcal{A}$ so that constants depending on $\mathcal{A}$ are also allowed to depend on the choice of immersion and on $M_1$.

Theorem \ref{thm:isogenymaninmumford} will follow from the following proposition together with Lemma \ref{lem:not-zariskidense-too}. The method of the proof of the proposition is the same as in the article \cite{MR969244} by Hindry. This method is based on work of Lang, Serre, and Tate \cite{MR0190146}.

\begin{prop}\label{prop:galoisoperation}
Let $\mathcal{V} \subset \mathcal{A}$ be a subvariety, defined over $K$. Suppose that $x \in \mathcal{V}(\bar{\mathbb{Q}})$ is a torsion point of the fiber $\mathcal{A}_{\pi(x)}$ and that $\mathcal{A}_{\pi(x)}$ is isogenous to $A_0$. Then one of the following two possibilities holds:
\begin{enumerate}
\item $x$ lies in a translate of a positive-dimensional abelian subvariety of $\mathcal{A}_{\pi(x)}$ that is contained in $\mathcal{V}_{\pi(x)}$, or
\item the order of $x$ is bounded by a constant that depends only on $A_0$, $K$, $\mathcal{A}$, and $\deg \mathcal{V}$.
\end{enumerate}
\end{prop}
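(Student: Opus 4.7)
The plan is to adapt the Galois-homothety method of Lang, Serre, Tate, and Hindry (\cite{MR0190146}, \cite{MR969244}) to the family setting. Suppose alternative (1) fails; we aim to bound $N := \ord(x)$. Passing to the fiber, set $A := \mathcal{A}_{\pi(x)}$ and $V := \mathcal{V}_{\pi(x)}$; by Theorem \ref{thm:bezout} together with the chosen immersions of $S$ and $\mathcal{A}$, $\deg V$ is bounded by a constant depending on $\mathcal{A}$ times $\deg \mathcal{V}$. By hypothesis, no positive-dimensional coset inside $V$ passes through $x$.

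Apply Serre's theorem (\cite{MR1730973}) to $A_0/K$: after a finite extension of $K$ absorbed into the constants, there is an open subgroup $U \subset \hat{\mathbb{Z}}^*$ of index at most $M_3 = M_3(A_0,K)$, and for each $\alpha \in U$ an element $\sigma_\alpha \in \Gal(\bar{\mathbb{Q}}/K)$ acting on $(A_0)_{\tors}$ by multiplication by $\alpha$. The fundamental observation from the introduction now applies: fixing an isogeny $\varphi : A_0 \to A$ with finite kernel $\Gamma$, the restriction of $\sigma_\alpha$ to $\Gamma \subset (A_0)_{\tors}$ is the bijection $[\alpha]$ of the finite set $\Gamma$, so $\sigma_\alpha(\Gamma) = \Gamma$. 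Hence $\sigma_\alpha$ fixes the isomorphism class of $A$, giving $\rho(\sigma_\alpha(\pi(x))) = \rho(\pi(x))$; since $|\rho^{-1}(\rho(\pi(x)))| \leq M_1$, the power $\sigma_\alpha^{M_1!}$ fixes $\pi(x) \in S(\bar{\mathbb{Q}})$.

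To compute the induced action on $A_{\tors}$, observe that the isogenies $\varphi$ and $\sigma_\alpha(\varphi) : A_0 \to A$ share the kernel $\Gamma$, so $\sigma_\alpha(\varphi) = u_\alpha \circ \varphi$ for some $u_\alpha \in \Aut(A)$. A direct calculation gives $\sigma_\alpha^n(y) = [\alpha^n](u_\alpha^n(y))$ for $y \in A_{\tors}$ and $n \geq 1$ (using that $u_\alpha$, being a homomorphism, commutes with multiplication). Galois-compatibility with the Weil pairing forces $u_\alpha$ to preserve the induced principal polarization $\lambda_A$ on $A$, so $u_\alpha \in \Aut(A,\lambda_A)$. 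This polarized automorphism group is finite by Rosati positivity, with exponent bounded by $M_2 = M_2(A_0)$, since $A$ and $A_0$ have isomorphic rational endomorphism algebras and the relevant roots of unity therefore live in a fixed finite subgroup. Setting $c := M_1!\, M_2$, the element $\tau_\alpha := \sigma_\alpha^c$ lies in $\Gal(\bar{\mathbb{Q}}/K)$, fixes $\pi(x)$, and acts on $A_{\tors}$ as the pure homothety $[\alpha^c]$.

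Since $\mathcal{V}$ is defined over $K$, this yields $[\alpha^c]x = \tau_\alpha(x) \in V$ for every $\alpha \in U$. The problem reduces to a quantitative Manin-Mumford statement on the single abelian variety $A$: bound $\ord(x)$ given that $V$ contains the homothety orbit $\{[\alpha^c]x : \alpha \in U\}$ but no positive-dimensional coset through $x$. I would conclude as in Hindry \cite{MR969244}: for well-chosen integers $n$ with $n \equiv \alpha^c \pmod{N}$, the subvariety $V \cap [n]^{-1}V$ contains $x$ and, by the no-coset hypothesis, is proper in $V$; iterating and applying Theorem \ref{thm:bezout} yields a descending chain of subvarieties through $x$ whose degrees grow polynomially in $\deg V$. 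Matching the resulting upper bound on the orbit size with the lower bound $\phi(N)/(M_3 c^{\omega(N)})$ on the image of $U^c$ in $(\mathbb{Z}/N\mathbb{Z})^*$ forces $N$ to be bounded in terms of $A_0, K, \mathcal{A}$, and $\deg \mathcal{V}$. The main difficulty I anticipate is this final B\'ezout-counting step, where one must carefully pass from $\hat{\mathbb{Z}}^*$-homotheties to integer multiplications and avoid the pathology of $[n]$ stabilizing $V$; the uniform bound $M_2$ on polarized automorphism groups across the isogeny class is a secondary technical point that follows from classical theory.
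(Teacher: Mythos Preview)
Your overall strategy matches the paper's: invoke Serre's homothety theorem on $A_0$, transfer the homothety action to $A = \mathcal{A}_{\pi(x)}$ via the isogeny $\varphi$, use quasi-finiteness of $\rho$ to fix $\pi(x)$, and then run Hindry's B\'ezout argument. There is, however, a logical gap in your first step. From $\sigma_\alpha(\Gamma) = \Gamma$ you correctly deduce that $A_{\pi(x)}$ and $A_{\sigma_\alpha(\pi(x))}$ are isomorphic as abelian varieties, but $\rho$ lands in the coarse moduli space $A_g$ of \emph{principally polarized} abelian varieties, so you need the isomorphism $\psi: A_{\pi(x)} \to A_{\sigma_\alpha(\pi(x))}$ with $\sigma_\alpha(\varphi)=\psi\circ\varphi$ to respect the polarizations before you may conclude $\rho(\sigma_\alpha(\pi(x))) = \rho(\pi(x))$. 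Your Weil-pairing argument appears only \emph{after} you have already fixed $\pi(x)$, so as written it cannot justify this earlier step; the argument is circular.

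The gap is repairable with your own ingredients, reordered: since $A_0$ carries a $K$-rational polarization, one computes that the cyclotomic character satisfies $\chi(\sigma_\alpha) = \alpha^2$, and then the same Weil-pairing computation (carried out for $\psi$ between the two fibers rather than for $u_\alpha \in \Aut(A)$) shows directly that $\hat\psi\circ\lambda_{\sigma_\alpha(\pi(x))}\circ\psi=\lambda_{\pi(x)}$, hence $\psi$ is a polarized isomorphism. The paper takes a different route here: it pulls the polarization back to $\hat\varphi \circ \lambda_{\pi(x)} \circ \varphi \in \Hom(A_0,\hat A_0)$ and invokes R\'emond's field-of-definition bound (Th\'eor\`eme~1.2 in \cite{R17}) to conclude that this homomorphism, and later $\varphi$ itself, are fixed by an iterate $\sigma_a^{F(g)!}$. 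Your alternative --- bounding the exponent of $\Aut(A,\lambda_A)$ uniformly across the isogeny class via the common rational endomorphism algebra --- is a legitimate substitute for the paper's second appeal to R\'emond, and your sketch of the Hindry endgame is consistent with the paper's Proposition~\ref{prop:hindry}.
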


We will use the following lemma to prove Proposition \ref{prop:galoisoperation}. It can be regarded as a uniform version within an isogeny class of a theorem of Serre (\cite{MR1730973}, No. 136, Th\'{e}or\`{e}me 2'). In the proof of this lemma, we crucially use that $\rho$ is quasi-finite.

\begin{lem}\label{lem:homothety}
There exists a constant $B \in \mathbb{N}$, depending only on $A_0$, $K$, and $\mathcal{A}$, such that for all $a, M \in \mathbb{N}$ with $\gcd(a,M) = 1$ there exists $\sigma \in \Gal(\bar{\mathbb{Q}}/K)$ with the following property: For all torsion points $x \in \mathcal{A}(\bar{\mathbb{Q}})$ of order $M$ such that $\mathcal{A}_{\pi(x)}$ is isogenous to $A_0$, we have $\sigma(\pi(x)) = \pi(x)$ and $\sigma(x) = a^Bx$.
\end{lem}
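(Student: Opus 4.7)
The plan is to combine Serre's theorem on homotheties in the Galois image of the torsion of $A_0$ with the finiteness of the fibers of $\rho$, and then to transport this scalar action to $x$ via an isogeny from $(A_0)_{\bar{K}}$ to $\mathcal{A}_{\pi(x)}$. By Serre's theorem (\cite{MR1730973}, No.~136, Th\'eor\`eme 2'), applied to $A_0/K$, there exists $c \in \mathbb{N}$ such that for every $a' \in \hat{\mathbb{Z}}^{\ast}$, multiplication by $(a')^{c}$ lies in the image of $\Gal(\bar{K}/K) \to \Aut((A_0)_{\bar{K},\tors})$. Given $a, M$ with $\gcd(a,M) = 1$ and any auxiliary modulus $N$, the Chinese Remainder Theorem furnishes an integer $a' \equiv a \pmod{M}$ with $\gcd(a',N) = 1$, and Serre's theorem then produces $\tau \in \Gal(\bar{K}/K)$ acting on $A_0[N]$ as $(a')^{c}$; since $(a')^{c} \equiv a^{c} \pmod{M}$, this replacement is harmless.

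By the fundamental observation from the introduction, $\tau$ fixes every finite subgroup of $(A_0)_{\bar{K}}$ as a set. Hence for any $s \in S(\bar{K})$ with $\mathcal{A}_s$ isogenous to $A_0$ and any isogeny $\varphi:(A_0)_{\bar K} \to \mathcal{A}_s$ of degree $D$ with kernel $H$, the conjugate $\tau(\varphi)$ has kernel $\tau(H) = H$, so $\mathcal{A}_{\tau(s)} \simeq A_0/H \simeq \mathcal{A}_s$ and $\rho(\tau(s)) = \rho(s)$. Since $|\rho^{-1}(\rho(s))| \le M_1$, the orbit of $s$ under $\langle\tau\rangle$ has cardinality $\leq M_1$, so $\sigma := \tau^{M_1!}$ fixes every such $s$. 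Taking $N = MD$ and writing $x = \varphi(y)$ for some $y \in A_0$ torsion of order dividing $MD$, one obtains
\[ \sigma(x) = \sigma(\varphi)(\sigma(y)) = \sigma(\varphi)(a^{cM_1!}\,y) = a^{cM_1!}\, u(x), \]
where $u \in \Aut(\mathcal{A}_s)$ is the unique automorphism with $\sigma(\varphi) = u\circ\varphi$ (well-defined since $\sigma$ fixes $H$ and so both isogenies share the kernel $H$).

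The main obstacle is to eliminate $u$ uniformly in $\mathcal{A}_s$. The key is that $u$ preserves the principal polarization $\lambda_s$ on $\mathcal{A}_s$: after raising $\sigma$ to a further bounded power (depending only on $A_0$ and $K$) so that $\sigma$ acts trivially on $\NS((A_0)_{\bar{K}})$, one has $\sigma(\varphi^{\ast}\lambda_s) = \varphi^{\ast}\lambda_s$, and the injectivity of $\varphi^{\ast}$ on N\'eron--Severi groups yields $u^{\ast}\lambda_s = \lambda_s$. Hence $u$ lies in the finite group $\Aut(\mathcal{A}_s,\lambda_s)$, of cardinality bounded by a constant $M(g)$ depending only on $g$ (via the symplectic embedding $\Aut(\mathcal{A}_s,\lambda_s) \hookrightarrow \Sp_{2g}(\mathbb{Z})$ and Minkowski's bound on finite subgroups of $\Sp_{2g}(\mathbb{Z})$). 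A final bounded power $\sigma^{T}$, with $T = T(g)$, then trivialises the cocycle $\sigma \mapsto u$ on this finite group (by first making $\sigma$ act trivially by permutation on $\Aut(\mathcal{A}_s,\lambda_s)$ and then exhausting the exponent of this finite group), giving $\sigma^{T}(\varphi) = \varphi$ and therefore $\sigma^{T}(x) = a^{B}\,x$ for a constant $B$ depending only on $A_0$, $K$, and $\mathcal{A}$.
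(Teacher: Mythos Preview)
Your strategy is right, but the second paragraph has a real gap: from $\mathcal{A}_{\tau(s)} \simeq A_0/H \simeq \mathcal{A}_s$ you conclude $\rho(\tau(s)) = \rho(s)$, yet $\rho$ maps to the coarse moduli space of \emph{principally polarized} abelian varieties, so you must show that the isomorphism $\iota:\mathcal{A}_s \to \mathcal{A}_{\tau(s)}$ with $\tau(\varphi) = \iota\circ\varphi$ satisfies $\hat{\iota}\circ\lambda_{\tau(s)}\circ\iota = \lambda_s$. You supply the needed ingredient in the third paragraph --- making $\sigma$ act trivially on $\NS((A_0)_{\bar K})$ so that the pulled-back polarization $\hat{\varphi}\circ\lambda_s\circ\varphi \in \Hom(A_0,\widehat{A_0})$ is fixed --- but only \emph{after} you have already used $\rho(\tau(s)) = \rho(s)$ to fix $s$. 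Reorder: first pass to a bounded power of $\tau$ (depending only on $A_0,K$) that acts trivially on $\NS((A_0)_{\bar K})$; then the equality $\hat{\varphi}\circ\lambda_s\circ\varphi = \widehat{\iota\circ\varphi}\circ\lambda_{\tau(s)}\circ(\iota\circ\varphi)$ gives the polarized isomorphism and hence $\rho(\tau(s)) = \rho(s)$; only then invoke $|\rho^{-1}(\rho(s))|\le M_1$. This is exactly what the paper does, except that it uses R\'emond's bound (Th\'eor\`eme~1.2 in \cite{R17}) on the field of definition of $\Hom(A_0,\widehat{A_0})$ in place of your $\NS$-trivialisation.

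A smaller point: as phrased, your $\tau$ is produced for a particular auxiliary modulus $N = MD$, hence depends on $x$, while the lemma asks for one $\sigma$ working for all torsion points of order $M$. Fix a single lift $\hat a \in \hat{\mathbb{Z}}^{\ast}$ of $a \bmod M$ and take $\tau$ acting as $\hat a^{\,c}$ on \emph{all} torsion of $(A_0)_{\bar K}$ simultaneously; this $\tau$ is then independent of $x$.

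With the reordering, your elimination of $u$ via the finite group $\Aut(\mathcal{A}_s,\lambda_s)$ and the cocycle argument is a legitimate alternative to the paper's route, which instead applies R\'emond's theorem a second time (now to $\Hom(A_0,\mathcal{A}_{\pi(x)})$ over $K(\pi(x))$) to conclude directly that a further bounded power of $\sigma$ fixes $\varphi$. Both yield $B = c(A_0,K)\cdot M_1!\cdot(\text{a bounded function of }g)$.
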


\begin{proof}
Let $a, M \in \mathbb{N}$ with $\gcd(a,M) = 1$ be given and fix $\hat{a} \in \hat{\mathbb{Z}}^{\ast}$ such that $\hat{a} \equiv a \mod M$. By Th\'eor\`eme 3 in \cite{MR1944805}, due to Serre (\cite{MR1730973}, No. 136, Th\'{e}or\`{e}me 2'), there exists a constant $c = c(A_0,K) \in \mathbb{N}$ such that there exists $\sigma_a \in \Gal(\bar{\mathbb{Q}}/K)$ acting on $\varprojlim A_0[n] \simeq \hat{\mathbb{Z}}^{2g}$ as multiplication by $\hat{a}^c$.

Let $x \in \mathcal{A}(\bar{\mathbb{Q}})$ be a torsion point of order $M$ such that the fiber $\mathcal{A}_{\pi(x)}$ is isogenous to $A_0$. Let $\varphi: A_0 \to \mathcal{A}_{\pi(x)}$ be an isogeny and let $y \in A_0(\bar{\mathbb{Q}})$ be a torsion point such that $\varphi(y) = x$. Choose $N \in \mathbb{N}$ large enough so that $y$ belongs to $A_0[N]$ and $\ker \varphi \subset A_0[N]$. The order $M$ of $x$ is equal to the greatest common divisor of all $n \in \mathbb{N}$ with $ny \in \ker \varphi$. In particular, $M$ divides $N$.

We now choose $\tilde{a} \in \mathbb{N}$ such that $\tilde{a} \equiv \hat{a} \mod N$ and replace $a$ by $\tilde{a}$. The Galois automorphism $\sigma$ will not depend on the choice of $\tilde{a}$, but only on $\sigma_a$ and $\mathcal{A}$. We deduce that $\sigma_a(y) = a^{c(A_0,K)}y$.

If $\lambda_s: \mathcal{A}_s \to \widehat{\mathcal{A}_s}$ denotes the principal polarization on $\mathcal{A}_s$ ($s \in S$), then we have that $\hat{\varphi} \circ \lambda_{\pi(x)} \circ \varphi$ is a polarization of $A_0$. By Th\'eor\`eme 1.2 in \cite{R17}, which improves and optimizes earlier results by Silverberg in \cite{MR1154704} and by Masser-W\"ustholz in \cite{MR1207211}, every homomorphism between $A_0$ and $\widehat{A_0}$ is defined over a Galois extension of $K$ of degree at most $F(g) = 4\beta(g)6^{2(g-1)}(g!)^2$, where $\beta(g) = 3$ if $g \not\in \{2,4,6\}$, while $\beta(2) = 8$, $\beta(4) = 75$, and $\beta(6) = \frac{49}{12}$. Hence, the polarization $\hat{\varphi} \circ \lambda_{\pi(x)} \circ \varphi$ of $A_0$ is fixed by $\sigma_a^{\circ F(g)!}$.

Since $\sigma_a$ acts on $A_0[N] \supset \ker \varphi$ as multiplication by a natural number that is coprime to $N$, we know that $\sigma_a^{\circ F(g)!}(\ker \varphi) = \ker \varphi$. Therefore, there is an isomorphism $\iota: \mathcal{A}_{\pi(x)} \to \mathcal{A}_{\sigma_a^{\circ F(g)!}(\pi(x))}$ such that $\sigma_a^{\circ F(g)!}(\varphi) = \iota \circ \varphi$ (see Theorem 5.13 in \cite{MR3729270}).

Since $\hat{\varphi} \circ \lambda_{\pi(x)} \circ \varphi$ is fixed by $\sigma_a^{\circ F(g)!}$ and the polarization $\lambda$ of $\mathcal{A}$ is defined over $K$, we deduce that
\[ \hat{\varphi} \circ \lambda_{\pi(x)} \circ \varphi = \widehat{\sigma_a^{\circ F(g)!}(\varphi)} \circ \lambda_{\sigma_a^{\circ F(g)!}(\pi(x))} \circ \sigma_a^{\circ F(g)!}(\varphi) = \hat{\varphi} \circ \hat{\iota} \circ \lambda_{\sigma_a^{\circ F(g)!}(\pi(x))} \circ \iota \circ \varphi.\]
Note that $\widehat{\sigma_a^{\circ F(g)!}(\varphi)} = \sigma_a^{\circ F(g)!}\left(\hat{\varphi}\right)$ since dualizing commutes with extending the base field. As $\varphi$ and $\hat{\varphi}$ are isogenies, it follows that $\lambda_{\pi(x)} = \hat{\iota} \circ \lambda_{\sigma_a^{\circ F(g)!}(\pi(x))} \circ \iota$, so $\mathcal{A}_{\pi(x)}$ and $\mathcal{A}_{\sigma_a^{\circ F(g)!}(\pi(x))}$ are isomorphic as polarized abelian varieties.

Hence, the point $\rho(\pi(x))$ is fixed by $\sigma_a^{\circ F(g)!}$. It follows that the finite set $\rho^{-1}(\rho(\pi(x)))$ of cardinality at most $M_1$ is permuted by $\sigma_a^{\circ F(g)!}$ and therefore the Galois automorphism $\sigma_a^{\circ F(g)!M_1!}$ fixes $\pi(x)$.

By Th\'eor\`eme 1.2 in \cite{R17}, the isogeny $\varphi$ is defined over a Galois extension of $K(\pi(x))$ of degree at most $F(g)$ with $F(g)$ as above. We deduce that $\sigma = \sigma_a^{\circ M_1!(F(g)!)^2}$ fixes $\varphi$ and
\[ a^Bx = \varphi(a^By) = \varphi\left(\sigma(y)\right) = \sigma(\varphi(y)) = \sigma(x)\]
with $B = c(A_0,K)M_1!(F(g)!)^2$.
\end{proof}

The other ingredient in the proof of Proposition \ref{prop:galoisoperation} is the following proposition, whose proof follows the proofs of Th\'{e}or\`{e}me 1 and Proposition 2 in \cite{MR969244}.

\begin{prop}\label{prop:hindry}
Let $F$ be a field with a fixed algebraic closure $\bar{F}$. Let $A$ be an abelian variety over $F$ of dimension $g$, embedded in some projective space through use of a symmetric very ample line bundle $L$. Suppose that there exists $c \in \mathbb{N}$ with the following property: For all $a, N \in \mathbb{N}$ with $\gcd(a,N) = 1$, there exists $\sigma_{a,N} \in \Gal(\bar{F}/F)$ such that $\sigma_{a,N}(q) = a^{c}q$ for all torsion points $q \in A(\bar{F})$ of order $N$.

There exists an effective constant $\gamma(g)$, depending only on $g$, such that the following holds: Let $V \subset A$ be a subvariety and let $p \in V(\bar{F})$ be a torsion point that is not contained in any translate of a positive-dimensional abelian subvariety of $A_{\bar{F}}$ that is contained in $V_{\bar{F}}$. Then the order of $p$ is bounded by
\[\max\left\{\exp\left(\gamma(g)c^2\right),(\deg V)^{\gamma(g)}\right\}.\]
\end{prop}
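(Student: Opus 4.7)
The plan is to adapt Hindry's approach from Théorème 1 and Proposition 2 of \cite{MR969244}, which is itself based on Lang--Serre--Tate \cite{MR0190146}. The homothety hypothesis produces many Galois conjugates of $p$ lying in $V$; then either a quantitative B\'ezout argument bounds $N := \mathrm{ord}(p)$, or a classical rigidity theorem forces $p$ to lie in a positive-dimensional abelian translate contained in $V$, contradicting the hypothesis. First I would replace $V$ by the irreducible component $W$ of $V$ containing $p$; since $\deg W \leq \deg V$ and the non-containment hypothesis is preserved, one may restrict attention to $W$. That hypothesis is equivalent to $\Stab(W, A)^0 = \{0_A\}$, because the translate $p + \Stab(W, A)^0 \subset W \subset V$ would otherwise be a forbidden positive-dimensional abelian translate.

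Next, for every $a \in \mathbb{N}$ with $\gcd(a, N) = 1$, the relation $\sigma_{a,N}(p) = a^c p$, together with $V$ being defined over $F$, yields $a^c p \in V(\bar F)$. Hence
\[
p \in W_a := W \cap [a^c]^{-1}(V).
\]
Since $L$ is symmetric, $[a^c]^{\ast} L \simeq L^{\otimes a^{2c}}$, so $[a^c]^{-1}(V)$ has degree $a^{2gc} \deg V$ with respect to $L$. Theorem \ref{thm:bezout} then gives $\deg W_a \leq a^{2gc}(\deg V)^2$, up to multiplicative constants depending only on $g$ and the ambient embedding.

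The crux is a dichotomy: either $W \subseteq [a^c]^{-1}(V)$, in which case $[a^c](W)$ is an irreducible component of $V$ of top dimension, or $W_a$ is a proper closed subvariety of $W$. In the first case, the map $a \mapsto [a^c](W)$ takes values in a finite set of cardinality at most $\deg V$. If this case occurred for enough distinct values of $a$, a pigeonhole argument would produce $a \neq b$ with $[a^c](W) = [b^c](W)$, making $W$ invariant under a self-map of $A$ of the form $[a^c-b^c]$ (composed with a suitable translation). The classical Lang--Serre--Tate rigidity then forces $\Stab(W, A)^0$ to be positive-dimensional, a contradiction. Therefore all but boundedly many (in terms of $g$ and $\deg V$) residues $a \in (\mathbb{Z}/N\mathbb{Z})^{\ast}$ are ``good'', i.e.\ induce a strict dimension drop $\dim W_a < \dim W$. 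I would then iteratively choose small good primes $a_1, \dots, a_r$ coprime to $N$, with $r \leq \dim W \leq g$, picked as the smallest available primes avoiding the exceptions, so that
\[
Z := W \cap \bigcap_{i=1}^{r} [a_i^c]^{-1}(V)
\]
is zero-dimensional and still contains $p$. Theorem \ref{thm:bezoutii} then yields $|Z| = \deg Z \leq (\deg V)^{g+1} \prod_i a_i^{2g^2 c}$.

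To extract the bound on $N$, observe that $Z$ contains the whole Galois orbit $\{a^c p : \gcd(a,N)=1\}$, since the defining conditions are simultaneously preserved by each $\sigma_{a,N}$. The orbit has cardinality at least $\phi(N)$ divided by the kernel of the $c$-th power map on $(\mathbb{Z}/N\mathbb{Z})^{\ast}$, which is at most $c^{\omega(N)}$. Combined with the standard lower bound $\phi(N) \gg N/\log\log N$ and with $\omega(N) \ll \log N/\log\log N$, this gives $N \leq \max\{(\deg V)^{\gamma(g)}, \exp(\gamma(g) c^2)\}$ for a suitable effective $\gamma(g)$, where the $\exp(\gamma(g) c^2)$ term absorbs the combined $c$-dependence from the exponent $a_i^{2g^2c}$ and from the kernel bound $c^{\omega(N)}$. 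The main obstacle is the Lang--Serre--Tate rigidity step in its quantitative form: proving that only $O_g(\deg V)$ values of $a$ fail to induce a dimension drop, securing that small good primes $a_i$ can be found (polylogarithmic in $N \deg V$, after excluding the $\omega(N) + O_g(\deg V)$ bad choices), and carefully balancing the resulting exponents to end up with the precise shape $\exp(\gamma(g) c^2)$ rather than an essentially larger expression.
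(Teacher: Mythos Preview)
Your plan follows the same Hindry--Lang--Serre--Tate philosophy as the paper, but the way you organise the dimension-dropping step is different, and the difference hides a real gap.

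The paper fixes a \emph{single} $d = a^c$ and studies the descending chain $Y_t = \bigcap_{j=0}^{t}[d^j]^{-1}(Y)$, where $Y$ is an equidimensional, $F$-stable union of components of $V$ through $p$. The key is a local-dimension argument: if $\dim_p Y_s = \dim_p Y_{s+k} = m' \geq 1$, then the $m'$-dimensional component $C$ of $(Y_s)_{\bar F}$ through $p$ satisfies $[d^k]C \subset Y_s$, and because $d^k p$ is a Galois conjugate of $p$ one gets $\dim_{d^k p} Y_s = m'$, forcing $[d^k]C$ to be a \emph{component} of $(Y_s)_{\bar F}$. Comparing $\deg([d^k]C) \geq d^{2km'}|\Stab C|^{-1}\deg C$ with the B\'ezout bound on components of $Y_s$ bounds $k$ in terms of $s$, $\deg Y$, and $d$; summing over $m'$ gives $\dim_p Y_{t_0} = 0$ for an effective $t_0 = t_0(g)$. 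The final balancing of $\deg Y_{t_0}$ against $\phi(N)/(2c^{\omega(N)})$ then proceeds as you outline.

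Your variant instead uses \emph{different} primes $a_1,\dots,a_r$ and relies on a pigeonhole to bound the bad $a$'s at each stage. At the first stage this can be made to work: if $W$ is chosen of \emph{maximal} dimension among components of $V$ through $p$ (you do not say this, and it is needed), then $[a^c]W \subset V$ forces $[a^c]W$ to be a component of $V$; two bad $a,b$ giving the same component yield $[a^c]W = [b^c]W$, and iterating this relation shows (via the degree formula) that $\Stab([a^c]W)^0 \neq 0$, whence $a^c p$ lies in a positive-dimensional abelian translate inside $V$, and conjugating back gives the same for $p$. So at most $\deg V$ primes are bad at stage one.

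The gap is at later stages. After one drop you pass to a component $W_1$ of $W \cap [a_1^c]^{-1}(V)$ through $p$ with $\dim W_1 < \dim W$. Now for a bad $a$ you have $[a^c]W_1 \subset V$, but $[a^c]W_1$ has dimension strictly less than $\dim W$, so it need not be a component of $V$: it can sit properly inside a larger component. The set $\{[a^c]W_1 : a \text{ bad}\}$ is then not a priori finite, your pigeonhole collapses, and the claimed bound of $O_g(\deg V)$ bad $a$'s at each stage is unjustified for $k \geq 2$. The paper's single-$d$ iteration with the local-dimension bookkeeping is precisely what replaces this failed pigeonhole: the target variety $Y_s$ changes with $s$, and the Galois-conjugacy of $d^k p$ and $p$ is what pins $[d^k]C$ down as a component of $Y_s$ rather than of $V$.

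A secondary issue: you need $Z$ to contain the full Galois orbit of $p$, but your $W$ is a geometric component and need not be $\Gal(\bar F/F)$-stable, so neither is $Z$. The paper avoids this by taking $Y$ to be an $F$-stable equidimensional union of components.
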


\begin{proof}
Let $p \in V(\bar{F})$ be a torsion point that is not contained in any translate of a positive-dimensional abelian subvariety of $A_{\bar{F}}$ that is contained in $V_{\bar{F}}$ and let $N$ denote the order of $p$. We want to show that $N$ is bounded from above as in the proposition.

Let $Y \subset V$ be an equidimensional subvariety such that $p \in Y(\bar{F})$ and $Y$ is a union of irreducible components of $V$. For $d \in \mathbb{N}$ fixed and $t \in \mathbb{Z}$, $t \geq 0$, set (as in \cite{MR969244})
\[ Y_t = \bigcap_{j=0}^{t}{\left[d^j\right]^{-1}(Y)},\]
where $[d^j]$ denotes the multiplication-by-$d^j$ morphism on $A$.

Suppose that $d = a^c$ for some $a \in \mathbb{N}$ that is coprime to $N$. Our hypothesis then implies that there exists $\sigma_{a,N} \in \Gal(\bar{F}/F)$ such that $\sigma_{a,N}(q) = dq$ for all torsion points $q \in A(\bar{F})$ of order $N$. Hence, we have that $d^j p = \sigma_{a,N}^{\circ j}(p) \in Y(\bar{F})$ for all $j \in \mathbb{Z}$, $j \geq 0$, and therefore $p \in Y_t(\bar{F})$ for all $t \in \mathbb{Z}$, $t \geq 0$.

Suppose furthermore that $\dim_p (Y_s)_{\bar{F}} = \cdots = \dim_p (Y_{s+k})_{\bar{F}} = m'$ for some integers $s \geq 1$, $k \geq 0$, and $m' \geq 1$. It follows that there exists an irreducible component $C$ of $(Y_s)_{\bar{F}}$ that contains $p$, has dimension $m'$, and is contained in $(Y_{s+k})_{\bar{F}}$. Hence, we have $C' = \left[d^{k}\right](C) \subset (Y_s)_{\bar{F}}$. Since $p' = d^{k} p = \sigma_{a,N}^{\circ k}(p)$ is a Galois conjugate of $p$ over $F$, we have that $\dim_{p'} (Y_s)_{\bar{F}} = \dim_{p} (Y_s)_{\bar{F}}$. Therefore $C'$ is an irreducible component of $(Y_s)_{\bar{F}}$. We deduce from Theorem \ref{thm:bezoutii} and Theorem \ref{thm:faltings} that
\begin{equation}\label{eq:degcbound}
\deg \left[d^{k}\right](C) = \deg C' \leq (\deg Y)\left(\max_{j=1,\hdots,s}{\deg \left[d^j\right]^{-1}(Y)}\right)^{\dim Y-m'}.
\end{equation}

Recall that $\Stab(C,A_{\bar{F}})$ denotes the stabilizer of $C$ in $A_{\bar{F}}$. As $p$ is not contained in a translate of a positive-dimensional abelian subvariety of $A_{\bar{F}}$ that is contained in $Y_{\bar{F}}$, we have $\dim \Stab(C,A_{\bar{F}}) = 0$. Since $L$ is symmetric, we have $[d]^{\ast}L \simeq L^{\otimes d^2}$ by Proposition 8.7.1 in \cite{MR2216774}. Hence, we can deduce from the projection formula that
\[ \deg \left[d^{j}\right](C) = d^{2jm'}\left|\Stab(C,A_{\bar{F}}) \cap \ker \left[d^{j}\right]\right|^{-1}(\deg C)\]
as well as
\begin{equation}\label{eq:degpreim}
\deg \left[d^j\right]^{-1}(Y) = (\deg Y)d^{2j(g-\dim Y)}
\end{equation}
for $j \in \mathbb{Z}$, $j \geq 0$.

Combining these two equations with \eqref{eq:degcbound}, we deduce that
\begin{equation}\label{eq:combinedequation}
d^{2km'}\left|\Stab(C,A_{\bar{F}})\right|^{-1}(\deg C) \leq (\deg Y)^{\dim Y-m'+1}d^{2s(g-\dim Y)(\dim Y-m')}.
\end{equation}

Since $C$ is an irreducible component of $\bigcap_{j=0}^{s}{\left(\left[d^j\right]^{-1}(Y)\right)_{\bar{F}}}$, Theorem \ref{thm:bezoutii} and Theorem \ref{thm:faltings} then imply together with \eqref{eq:degpreim} that
\begin{multline}\label{eq:degcboundtoo}
\deg C \leq (\deg Y)\left(\max_{j=1,\hdots,s}{\deg \left[d^j\right]^{-1}(Y)}\right)^{\dim Y-m'} \\
 = (\deg Y)^{\dim Y-m'+1}d^{2s(g-\dim Y)(\dim Y-m')}.
\end{multline}

Now $\Stab(C,A_{\bar{F}})$ is a union of irreducible components of
\[ (C-x_1) \cap \hdots \cap (C-x_{m'+1})\]
for well chosen $x_1, \hdots, x_{m'+1} \in C(\bar{F})$. Theorem \ref{thm:bezout} implies that
\[ |\Stab(C,A_{\bar{F}})| = \deg \Stab(C,A_{\bar{F}}) \leq (\deg C)^{m'+1}.\]

We deduce from this together with \eqref{eq:combinedequation} and \eqref{eq:degcboundtoo} that
\begin{align*}
d^{2km'} \leq (\deg C)^{m'}(\deg Y)^{\dim Y-m'+1}d^{2s(g-\dim Y)(\dim Y-m')} \\
\leq (\deg Y)^{(\dim Y-m'+1)(m'+1)}d^{2s(g-\dim Y)(\dim Y-m')(m'+1)}.
\end{align*}

We now assume that
\begin{equation}\label{eq:dlowerbound}
d^{2(g-\dim Y)} \geq \deg Y.
\end{equation}
Note that $g-\dim Y > 0$ as otherwise $p$ would be contained in a translate of a positive-dimensional abelian subvariety of $A_{\bar{F}}$ that is contained in $Y_{\bar{F}}$.

It then follows that
\[ k \leq \frac{(s+1)(\dim Y-m'+1)(m'+1)(g-\dim Y)}{m'} \leq \frac{(s+1)\Xi}{m'},\]
where
\[ \Xi = \left(\frac{(\dim Y+2)^2(g-\dim Y)}{4}\right).\]
Induction now shows that $\dim_p (Y_t)_{\bar{F}} = 0$ for some $t \leq t_0$, where $t_0$ is effective and depends only on $g$ and $\dim Y$.

The same holds for any conjugate of $p$ over $F$. By our hypothesis, there are at least $\phi(N)/\left(2c^{\omega(N)}\right)$ such conjugates. We therefore get a lower bound
\begin{equation}\label{eq:lowerboundyt}
\deg Y_t \geq \frac{\phi(N)}{2c^{\omega(N)}}.
\end{equation}

Applying \eqref{eq:degpreim} together with Theorem \ref{thm:bezout} and $t \leq t_0$ to bound the degree of $Y_t = \bigcap_{j=0}^{t}{\left[d^j\right]^{-1}(Y)}$, we obtain an upper bound
\begin{equation}\label{eq:upperboundyt}
\deg Y_t \leq (\deg Y)^{t_0+1}d^{(g-\dim Y)t_0(t_0+1)} \leq d^{(g-\dim Y)(t_0+1)(t_0+2)}.
\end{equation}

By Th\'eor\`eme 11 in \cite{MR736719}, we have $\omega(N) \leq \frac{7\log N}{5 \log \log N}$ if $N \geq 3$. We can also estimate
\[ \phi(N) = N\prod_{p' | N}{\frac{p'-1}{p'}} \geq N\prod_{j=2}^{\omega(N)+1}{\frac{j-1}{j}} = \frac{N}{\omega(N)+1} \geq \frac{N}{2\log N +1},\]
where the product runs over all primes $p' \in \mathbb{N}$ that divide $N$. For $N \geq 3$, it then follows from combining this with \eqref{eq:lowerboundyt} and \eqref{eq:upperboundyt} that
\[ \frac{N^{1-\frac{7\log c}{5 \log \log N}}}{2(2\log N+1)} \leq d^{(g-\dim Y)(t_0+1)(t_0+2)}.\]

If $N > \exp\left(c^{\frac{8}{5}}\right) \geq 2$, which we will assume from now on, this implies that
\[ \frac{N^{\frac{1}{8}}}{2(2\log N+1)} \leq d^{(g-\dim Y)(t_0+1)(t_0+2)}.\]
As the function $x \mapsto x^{\frac{1}{16}} \cdot (2(2\log x+1))^{-1}$ ($x \geq 1$) attains its minimum at $x = \exp(31/2)$, we deduce that
\[ \frac{\exp\left(\frac{31}{32}\right)}{64}N^{\frac{1}{16}} \leq d^{(g-\dim Y)(t_0+1)(t_0+2)}.\]

We want to obtain a contradiction for $N$ large enough, but we have to make sure that \eqref{eq:dlowerbound} is satisfied. Recall that $d = a^c$ for $a \in \mathbb{N}$ coprime to $N$ and set $b = c(g-\dim Y) > 0$. In order to obtain a contradiction, it suffices to find $a \in \mathbb{N}$, coprime to $N$, such that
\[ (\deg Y)^{\frac{(t_0+1)(t_0+2)}{2}} \leq a^{b(t_0+1)(t_0+2)} < \frac{N^{\frac{1}{16}}}{25}.\]
Simplifying the problem, we may look for a prime number $a$ that does not divide $N$ and satisfies
\[ (\deg Y)^{\frac{1}{2b}} \leq a < \left(\frac{N^{\frac{1}{16}}}{25}\right)^{\frac{1}{b(t_0+1)(t_0+2)}}.\]

By Corollary 1 of Theorem 2 in \cite{MR137689}, this is possible if $N \geq \left(25\cdot17^{b(t_0+1)(t_0+2)}\right)^{16}$, which we will assume from now on, and
\[ \omega(N) + (\deg Y)^{\frac{1}{2b}} < \left(\frac{N^{\frac{1}{16}}}{25}\right)^{\frac{1}{b(t_0+1)(t_0+2)}}\frac{b(t_0+1)(t_0+2)}{\frac{\log N}{16}-\log 25},\]
which is a consequence of the simpler inequality
\[ \frac{\log N}{16}\left(\omega(N) + (\deg Y)^{\frac{1}{2b}}\right) < \left(\frac{N^{\frac{1}{16}}}{25}\right)^{\frac{1}{b(t_0+1)(t_0+2)}}.\]
Thanks to the above bound for $\omega(N)$ and $N \geq 25^{16} \geq e^e$, this inequality in turn follows from
\[ (\log N)^2(\deg Y)^{\frac{1}{2b}} < \left(\frac{N^{\frac{1}{16}}}{25}\right)^{\frac{1}{b(t_0+1)(t_0+2)}}.\]

Recall that $Y$ is a union of irreducible components of $V$ and therefore $\deg Y \leq \deg V$. Furthermore, if $\epsilon \in (0,1)$ and $N \geq \exp\left(\frac{1}{\epsilon^2}\right) \geq \epsilon^{-\frac{1}{\epsilon}}$, then we have
\[ \log N \leq \frac{N^{\epsilon}}{\epsilon} \leq N^{2\epsilon}.\]
This implies that we get a contradiction as soon as $N \geq \exp((128b(t_0+1)(t_0+2))^2)$ and
\[ N^{\frac{1}{32b(t_0+1)(t_0+2)}}(\deg V)^{\frac{1}{2b}} < \left(\frac{N^{\frac{1}{16}}}{25}\right)^{\frac{1}{b(t_0+1)(t_0+2)}}.\]
This last inequality is equivalent to $N > 25^{32}(\deg V)^{16(t_0+1)(t_0+2)}$.

Since $\dim Y \in \{0,\hdots,g\}$, all this together implies that there exists an effective constant $\gamma(g)$, depending only on $g$, such that we get a contradiction if $N > \max\left\{\exp\left(\gamma(g)c^2\right),(\deg V)^{\gamma(g)}\right\}$. Thus, we have established the proposition.
\end{proof}

We can now prove Proposition \ref{prop:galoisoperation}.

\begin{proof}[Proof of Proposition \ref{prop:galoisoperation}]
Let $x \in \mathcal{V}(\bar{\mathbb{Q}})$ be a torsion point such that the fiber $\mathcal{A}_{\pi(x)}$ is isogenous to $A_0$. By Theorem \ref{thm:faltings}, the Zariski closure of $\mathcal{V}$ in $\mathbb{P}_{\bar{\mathbb{Q}}}^{R_1R_2+R_1+R_2}$ is cut out by homogeneous forms of degree at most $\deg \mathcal{V}$ with coefficients in $K$. By specialization, it follows that $\mathcal{V}_{\pi(x)}$ is cut out in $\mathbb{P}_{\bar{\mathbb{Q}}}^{R_2}$ by forms of degree at most $\deg \mathcal{V}$ with coefficients in $K(\pi(x))$. We can find $X \subset \mathcal{V}_{\pi(x)}$ equal to a union of irreducible components of $\mathcal{V}_{\pi(x)}$ such that $x \in X(\bar{\mathbb{Q}})$ and $X$ is defined over $K(\pi(x))$ and irreducible as a variety over $K(\pi(x))$. By Theorem \ref{thm:bezoutii} (over the field $K(\pi(x))$), the degree $\deg X$ of $X$ as a subvariety of $\mathbb{P}_{\bar{\mathbb{Q}}}^{R_2}$ is bounded from above by $(\deg \mathcal{V})^{R_2}$.

Suppose now that $x$ does not satisfy condition (1) in Proposition \ref{prop:galoisoperation}. Let $B \in \mathbb{N}$ be the constant provided by Lemma \ref{lem:homothety}. We apply Proposition \ref{prop:hindry} with $F = K(\pi(x))$, $\bar{F} = \bar{\mathbb{Q}}$, $A = \mathcal{A}_{\pi(x)}$, $c = B$, and $V = X$. It follows that $x$ satisfies condition (2) in Proposition \ref{prop:galoisoperation}.
\end{proof}

The next lemma will give us everything we need to prove Theorem \ref{thm:isogenymaninmumford}.

\begin{lem}\label{lem:not-zariskidense-too}
Let $\mathcal{V} \subset \mathcal{A}$ be an irreducible subvariety that dominates $S$. Suppose that all abelian subvarieties of $\mathcal{A}_\xi$ are defined over $\bar{\mathbb{Q}}(S)$ and that the stabilizer $\Stab(\mathcal{V}_\xi,\mathcal{A}_\xi)$ is finite. Then the union of all translates of positive-dimensional abelian subvarieties of $\mathcal{A}_s$ that are contained in $\mathcal{V}_s$ for some $s \in S(\bar{\mathbb{Q}})$ is contained in a proper subvariety of $\mathcal{V}$.
\end{lem}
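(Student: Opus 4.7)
The plan is to argue by contradiction: assume the union $\Sigma$ of all such translates is Zariski dense in $\mathcal{V}$, and deduce that $\Stab(\mathcal{V}_\xi,\mathcal{A}_\xi)$ contains a positive-dimensional abelian subvariety, contradicting the hypothesis.

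The first step is a degree bound: any positive-dimensional abelian subvariety $B\subset\mathcal{A}_s$ that appears as the direction of a translate $t+B\subset\mathcal{V}_s$ has degree (with respect to the fixed embedding of $\mathcal{A}_s\hookrightarrow\mathbb{P}^{R_2}_{\bar{\mathbb{Q}}}$) bounded by a constant $D_0$ depending only on $\deg\mathcal{V}$, $g$, and the embedding parameters. The translate $t+B$ lies in some irreducible component of $\mathcal{V}_s$ whose degree is controlled by $\deg\mathcal{V}$ via Theorems \ref{thm:bezout} and \ref{thm:faltings}, and a further B\'ezout estimate based on Theorem \ref{thm:bezoutii} together with the translation-invariance of degree inside the abelian fiber $\mathcal{A}_s$ yields the bound on $\deg B$. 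I expect this bound to be the main technical obstacle, since a priori an abelian subvariety of a bounded-degree subvariety of projective space need not itself have bounded degree.

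Given the bound, the relative Chow variety $\mathcal{H}$ parameterizing pairs $(s,W)$ with $s\in S$ and $W$ a translate in $\mathcal{V}_s$ of a positive-dimensional abelian subvariety of $\mathcal{A}_s$ of degree at most $D_0$ is a scheme of finite type over $S$. Its universal family $\mathcal{U}\subset\mathcal{H}\times_S\mathcal{V}$ has image $\Sigma$ under the projection to $\mathcal{V}$. By the density assumption and the irreducibility of $\mathcal{V}$, some irreducible component $\mathcal{H}_0$ of $\mathcal{H}$ has its universal family $\mathcal{U}_0$ dominating $\mathcal{V}$; consequently $\mathcal{H}_0$ also dominates $S$. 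Let $H_{0,\xi}$ denote its geometric generic fiber. I would then invoke the rigidity of abelian subvarieties inside the fixed abelian variety $\mathcal{A}_\xi$: since an abelian subvariety of $\mathcal{A}_\xi$ is determined by its $n$-torsion for any sufficiently large $n$, and the $n$-torsion of $\mathcal{A}_\xi$ is a finite \'{e}tale group scheme whose sub-group schemes are locally constant in any flat family, the ``abelian part'' map $H_{0,\xi}\to\{\text{abelian subvarieties of }\mathcal{A}_\xi\}$ sending $t+B\mapsto B$ is constant on the connected $H_{0,\xi}$. This yields a single positive-dimensional $B\subset\mathcal{A}_\xi$; the hypothesis that all abelian subvarieties of $\mathcal{A}_\xi$ are defined over $\bar{\mathbb{Q}}(S)$ allows $B$ to be spread to an abelian subscheme of $\mathcal{A}$ over an open dense subset of $S$ without taking a finite cover.

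Finally, the image in $\mathcal{V}_\xi$ of the universal family over $H_{0,\xi}$ is the $B$-invariant closed subvariety $\{v\in\mathcal{V}_\xi : v+B\subset\mathcal{V}_\xi\}\subset\mathcal{V}_\xi$. Since $\mathcal{U}_0\to\mathcal{V}$ is dominant and $\mathcal{H}_0\to S$ is dominant, this image is Zariski dense in $\mathcal{V}_\xi$, and being closed, it must equal $\mathcal{V}_\xi$. Therefore $B\subset\Stab(\mathcal{V}_\xi,\mathcal{A}_\xi)$, contradicting its finiteness since $\dim B\geq 1$.
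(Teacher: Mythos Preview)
Your approach is genuinely different from the paper's and would work with one repair, but the paper's argument is far shorter. The paper does not bound degrees or invoke Chow varieties at all. Instead, after a base change making $\mathcal{V}_\xi$ geometrically irreducible, it picks any integer $N$ exceeding $|\Stab(\mathcal{V}_\xi,\mathcal{A}_\xi)|$, lets $\mathcal{T}_1,\dots,\mathcal{T}_R\subset\mathcal{A}$ be the irreducible components of the locus of $N$-torsion, and sets $\mathcal{W}_i=\mathcal{V}\cap(\mathcal{V}+\mathcal{T}_i)$. Every positive-dimensional abelian subvariety contains a point $\tau$ of exact order $N$, so if $x\in t+B\subset\mathcal{V}_s$ with $\dim B\ge 1$ then $x-\tau\in t+B\subset\mathcal{V}_s$ and hence $x\in\mathcal{W}_i$ for the appropriate $i$. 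The choice of $N$ forces each $\mathcal{W}_i\subsetneq\mathcal{V}$ (a strict containment at the generic fibre), and $\bigcup_i\mathcal{W}_i$ is the desired proper subvariety. No moduli, no rigidity, no degree bound.

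Your degree bound is, as you suspect, the genuine gap: the sentence invoking Theorems~\ref{thm:bezout}, \ref{thm:bezoutii}, \ref{thm:faltings} and translation-invariance does not actually produce a bound on $\deg B$ from a bound on the component of $\mathcal{V}_s$ containing $t+B$. (Such a bound does exist, via iterated intersection with translates to cut out a variety whose stabiliser is $B$, but it takes more than you have written.) The good news is that you do not need it. Let $\Sigma_D$ be the union of translates of positive-dimensional abelian subvarieties of degree at most $D$ contained in some $\mathcal{V}_s$; each $\Sigma_D$ is constructible, so the closures $\overline{\Sigma_1}\subset\overline{\Sigma_2}\subset\cdots$ form an ascending chain in $\mathcal{V}$ which stabilises by Noetherianity at some $D_0$. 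Since $\Sigma=\bigcup_D\Sigma_D$, density of $\Sigma$ forces density of $\Sigma_{D_0}$, and you can run the rest of your argument with this $D_0$. Your rigidity step and the final stabiliser contradiction are then fine; note that the hypothesis that every abelian subvariety of $\mathcal{A}_\xi$ is defined over $\bar{\mathbb{Q}}(S)$ is exactly what ensures the abelian part $B$ is constant across the possibly disconnected geometric generic fibre $H_{0,\xi}$, since the Galois-conjugate components must all carry the same $B$.

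The trade-off: your route is more structural and would identify the obstructing abelian subvariety explicitly, at the cost of Chow-scheme machinery and the rigidity of abelian subvarieties. The paper's torsion trick is a two-line reduction once $N$ is chosen and needs nothing beyond the finiteness of the generic stabiliser.
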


The proof of Lemma \ref{lem:not-zariskidense-too} runs along similar lines as the proof of Lemma 3.4 in \cite{D19}. The difference is that the base variety $S$ is now allowed to have dimension bigger than $1$. Note that Lemma \ref{lem:not-zariskidense-too} could also be obtained as a consequence of the much more general Theorem 12.2 in \cite{G152}, at least for $\mathcal{A}$ contained in a suitable universal family and then for arbitrary $\mathcal{A}$ as well. However, we prefer to give a direct proof that does not make use of the language of mixed Shimura varieties.

\begin{proof}
We first perform a quasi-finite dominant base change $S' \to S$ such that $S'$ is smooth and irreducible and every irreducible component of $\mathcal{V}_\xi$ is defined over $\bar{\mathbb{Q}}(S') \subset \overline{\bar{\mathbb{Q}}(S)}$. Set $\mathcal{A}' = \mathcal{A} \times_{S} S'$. Let $\mathcal{V}'$ be an irreducible component of $\mathcal{V} \times_{S} S' \hookrightarrow \mathcal{A}'$ that dominates $S'$ (and hence dominates $\mathcal{V}$).

If $\zeta$ is a geometric generic point of $S'$, then $\Stab(\mathcal{V}'_{\zeta},\mathcal{A}'_{\zeta})$ must be finite. Otherwise it would contain a positive-dimensional abelian subvariety of $\mathcal{A}'_{\zeta}$, which we identify with $\mathcal{A}_{\xi}$, but as all abelian subvarieties of $\mathcal{A}'_{\zeta}$ are defined over $\bar{\mathbb{Q}}(S)$, this abelian subvariety would be contained in the stabilizer of $\mathcal{V}_\xi$, which could therefore not be finite. Furthermore, the generic fiber of $\mathcal{V}'$ is irreducible by Section 2.1.8 of Chapter 0 of \cite{MR0163908} and hence also $\mathcal{V}'_\zeta$ is irreducible by our choice of $S'$.

If the union of all translates of positive-dimensional abelian subvarieties of $\mathcal{A}_s$ that are contained in $\mathcal{V}_s$ for some $s \in S(\bar{\mathbb{Q}})$ is Zariski dense in $\mathcal{V}$, then the union of all translates of positive-dimensional abelian subvarieties of $\mathcal{A}'_{s'}$ that are contained in $\mathcal{V}'_{s'}$ for some $s' \in S'(\bar{\mathbb{Q}})$ is Zariski dense in $\mathcal{V}'$. So we can replace $\mathcal{A}$ and $\mathcal{V}$ by $\mathcal{A}'$ and $\mathcal{V}'$ and assume without loss of generality that $\mathcal{V}_\xi$ is irreducible.

Let $N \in \mathbb{N}$ be a natural number that is larger than the order of $\Stab(\mathcal{V}_\xi,\mathcal{A}_\xi)$. There are finitely many irreducible subvarieties $\mathcal{T}_1, \hdots, \mathcal{T}_R \subset \mathcal{A}$ such that each $\mathcal{T}_i$ dominates $S$ and the union of the $\mathcal{T}_i$ ($i=1,\hdots,R$) is equal to the set of torsion points of order $N$ on $\mathcal{A}$: First of all, every irreducible component of the pre-image of $\epsilon(S)$ under the multiplication-by-$N$ morphism $[N]$ dominates $S$ by Proposition 2.3.4(iii) in \cite{MR0199181} since $[N]$ is \'etale, so flat (see \cite{MR861974}, Proposition 20.7). Therefore, every irreducible component of $[N]^{-1}(\epsilon(S))$ is of dimension $\dim S$. The same holds for any $M \in \mathbb{N}$ that divides $N$. Furthermore, $[N]^{-1}(\epsilon(S))$ is smooth as $[N]$ is \'etale and $S$ is smooth. Hence, no two distinct irreducible components of $[N]^{-1}\left(\epsilon(S)\right)$ intersect each other. So every irreducible component of $[N]^{-1}\left(\epsilon(S)\right)$ is either contained in $\bigcup_{M|N, M \neq N}[M]^{-1}\left(\epsilon(S)\right)$ or disjoint from it and our claim follows.

We now consider $\mathcal{W}_i = \mathcal{V} \cap (\mathcal{V}+\mathcal{T}_i)$ for $i \in \{1,\hdots,R\}$. If this variety were equal to $\mathcal{V}$, then we would have $\mathcal{V} \subset \mathcal{V}+\mathcal{T}_i$ and so $\mathcal{V}_\xi \subset \mathcal{V}_\xi+(\mathcal{T}_i)_\xi$. For dimension reasons and thanks to the irreducibility of $\mathcal{V}_\xi$, we would get that $\mathcal{V}_\xi = t+\mathcal{V}_\xi$ for a torsion point $t \in \mathcal{A}_\xi\left(\overline{\bar{\mathbb{Q}}(S)}\right)$ of order $N$. This contradicts our choice of $N$. So $\mathcal{W}_i \subsetneq \mathcal{V}$.

On the other hand, each positive-dimensional abelian variety contains a point of order $N$, so the union of all translates of positive-dimensional abelian subvarieties of $\mathcal{A}_s$ that are contained in $\mathcal{V}_s$ for some $s \in S(\bar{\mathbb{Q}})$ is contained in $\bigcup_{i=1}^{R}{\mathcal{W}_i}$. As every $\mathcal{W}_i$ is a proper closed subset of $\mathcal{V}$ and $\mathcal{V}$ is irreducible, the lemma follows.
\end{proof}

We now prove Theorem \ref{thm:isogenymaninmumford}.

\begin{proof}[Proof of Theorem \ref{thm:isogenymaninmumford}]
Recall that we can and do assume without loss of generality that $S$ is affine and $\rho$ is quasi-finite. After a quasi-finite dominant base change $S' \to S$ with $S'$ affine and irreducible and after replacing $\mathcal{A}$ by $\mathcal{A} \times_{S} S'$ and $\mathcal{V}$ by an irreducible component of $\mathcal{V} \times_{S} S'$ that dominates $S'$ (and hence $\mathcal{V}$), we can furthermore assume that all abelian subvarieties of $\mathcal{A}_\xi$ are defined over $\bar{\mathbb{Q}}(S)$. Here and in the following, it might sometimes be necessary to replace the field of definition $K$ by a finite extension of $K$ and we will do this without further comments. Note that the principal polarization of $\mathcal{A}$ yields a principal polarization of $\mathcal{A} \times_{S} S'$, that the morphism $S' \to A_g$ factors through $S \to A_g$, and that we can construct a quasi-projective immersion of $\mathcal{A} \times_{S} S'$ with the same properties as the one of $\mathcal{A}$.

Let $A'$ be the irreducible component of $\Stab(\mathcal{V}_\xi,\mathcal{A}_\xi)$ that contains $0_{\mathcal{A}_\xi}$. Then $A'$ is an abelian subvariety of $\mathcal{A}_\xi$. We can now use the Poincar\'{e} reducibility theorem to deduce that there exists another abelian subvariety $A''$ of $\mathcal{A}_\xi$ such that the natural morphism $A' \times_{\overline{\bar{\mathbb{Q}}(S)}} A'' \to \mathcal{A}_\xi$ given by restricting the addition morphism $\mathcal{A}_\xi \times_{\overline{\bar{\mathbb{Q}}(S)}} \mathcal{A}_\xi \to \mathcal{A}_\xi$ is an isogeny. Note that this morphism as well as $A'$ and $A''$ are defined over $\bar{\mathbb{Q}}(S)$.

By ``spreading out" (see Theorem 3.2.1 and Table 1 on pp. 306--307 in \cite{MR3729254}), we can find abelian schemes $\mathcal{A}'$ and $\mathcal{A}''$ over an open Zariski dense subset $U$ of $S$ with geometric generic fibers $A'$ and $A''$ and a morphism $\alpha: \mathcal{A}' \times_U \mathcal{A}'' \to \mathcal{A} \times_S U$ that extends the isogeny $A' \times_{\overline{\bar{\mathbb{Q}}(S)}} A'' \to \mathcal{A}_\xi$. We can assume without loss of generality that $S = U$.

As $\alpha$ is dominant, proper, and maps the image of the zero section to the image of the zero section, it follows that $\alpha$ restricts to an isogeny on each fiber. It suffices to prove that the conclusion of the theorem holds for one of the irreducible components of $\alpha^{-1}(\mathcal{V})$ that dominate $\mathcal{V}$, which we call $\mathcal{V}'$, inside the family $\mathcal{A}' \times_{S} \mathcal{A}''$.

By construction, the geometric generic fiber $\mathcal{V}'_\xi$ is equal to $\mathcal{A}'_\xi \times_{\overline{\bar{\mathbb{Q}}(S)}} \mathcal{V}''_\xi$, where $\mathcal{V}''$ is the image of $\mathcal{V}'$ under the projection to $\mathcal{A}''$, and hence $\mathcal{V}' = \mathcal{A}' \times_{S} \mathcal{V}''$. Note that the projection morphism is proper, so $\mathcal{V}''$ is closed in $\mathcal{A}''$.

Let $\epsilon': S \to \mathcal{A}'$ denote the zero section of $\mathcal{A}'$ and set $\mathcal{V}''' = \alpha(\epsilon'(S) \times_{S} \mathcal{V}'') \subset \mathcal{A}$. By construction, the stabilizer $\Stab(\mathcal{V}'''_\xi,\mathcal{A}_\xi)$ is finite and the set of torsion points $x \in \mathcal{V}'''(\bar{\mathbb{Q}})$ such that $\mathcal{A}_{\pi(x)}$ is isogenous to $A_0$ is Zariski dense in $\mathcal{V}'''$.

Combining Proposition \ref{prop:galoisoperation} with Lemma \ref{lem:not-zariskidense-too} shows that $\mathcal{V}'''$ must be equal to an irreducible component of $[M]^{-1}\left(\epsilon(S)\right)$ for some $M \in \mathbb{N}$, where $[M]: \mathcal{A} \to \mathcal{A}$ denotes the multiplication-by-$M$ morphism. The theorem follows.
\end{proof}

\section{Explicit results in the Legendre family -- curves}\label{sec:legendrecurve}

Recall that $Y(2) = \mathbb{A}_{\mathbb{Q}}^{1}\backslash\{0,1\}$ and that $\mathcal{E} \hookrightarrow Y(2) \times_{\mathbb{Q}} \mathbb{P}_{\mathbb{Q}}^2 \subset \mathbb{P}_{\mathbb{Q}}^1 \times_{\mathbb{Q}} \mathbb{P}_{\mathbb{Q}}^2$ is the Legendre family of elliptic curves over $Y(2)$ defined by the equation $Y^2Z = X(X-Z)(X-\lambda Z)$, where $\lambda$ is the affine coordinate on $Y(2)$ and $[X:Y:Z]$ are homogeneous projective coordinates on $\mathbb{P}_{\mathbb{Q}}^2$. Both $Y(2)$ and $\mathcal{E}$ are varieties over $\mathbb{Q}$. There is a natural surjective morphism $\pi: \mathcal{E} \to Y(2)$. The $j$-invariant defines a morphism $j: Y(2) \to \mathbb{A}_{\mathbb{Q}}^1$.

In this section, we will prove fully explicit results on ``Manin-Mumford with isogenies" and ``Mordell-Lang with isogenies" for a curve in the Legendre family in the case where everything is defined over a number field (or over $\mathbb{Q}$) and every irreducible component of the curve dominates $Y(2)$. Our results also have implications for the case of a curve in an arbitrary fibered power of the Legendre family since we can project onto each factor of the fibered power. We first prove the following useful lemma, a more explicit and precise version of Lemma \ref{lem:homothety}:

\begin{lem}\label{lem:uniformhomothety}
Let $K$ be a number field with a fixed algebraic closure $\bar{K}$. Let $E_0$ be an elliptic curve, defined over $K$. Let $c = c(E_0/K)$ be the ``Serre constant", i.e. the smallest natural number $c$ such that for any $a \in \hat{\mathbb{Z}}^{\ast}$, there is a $\tau_a \in \Gal(\bar{K}/K)$ that acts on the torsion of $(E_0)_{\bar{K}}$ as multiplication by $a^c$. Then for any $a \in \hat{\mathbb{Z}}^{\ast}$, there is a $\sigma_a \in \Gal(\bar{K}/K)$ such that for every $s \in Y(2)(\bar{K})$ such that $\mathcal{E}_{s}$ is isogenous to $(E_0)_{\bar{K}}$, $\sigma_a$ fixes $s$, fixes every isogeny from $(E_0)_{\bar{K}}$ to $\mathcal{E}_s$, and acts on the torsion of $\mathcal{E}_{s}$ as multiplication by $a^{2c}$. More precisely, if $\sigma \in \Gal(\bar{K}/K)$ acts on the torsion of $(E_0)_{\bar{K}}$ as multiplication by $b \in \hat{\mathbb{Z}}^{\ast}$, then for every $s \in Y(2)(\bar{K})$ such that $\mathcal{E}_{s}$ is isogenous to $(E_0)_{\bar{K}}$, $\sigma^{\circ 2}$ fixes $s$, fixes every isogeny from $(E_0)_{\bar{K}}$ to $\mathcal{E}_s$, and acts on the torsion of $\mathcal{E}_{s}$ as multiplication by $b^{2}$.
\end{lem}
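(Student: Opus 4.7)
The plan is to adapt the argument of Lemma \ref{lem:homothety} to the specific features of the Legendre family: the fibers are elliptic curves, so their principal polarization is canonical, the generic automorphism group is just $\{\pm \id\}$, and the parameter $s \in Y(2)$ is rigidified by the ordered triple of non-trivial $2$-torsion points of $\mathcal{E}_s$. I would prove the second (more precise) statement first and then deduce the first by setting $\sigma_a = \tau_a^{\circ 2}$, where $\tau_a \in \Gal(\bar K/K)$ is the element supplied by the definition of the Serre constant $c$ and acts as $a^c$ on the torsion of $(E_0)_{\bar K}$.

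Let $\sigma \in \Gal(\bar K/K)$ act on the torsion of $(E_0)_{\bar K}$ as multiplication by $b \in \hat{\mathbb{Z}}^\ast$, and let $s \in Y(2)(\bar K)$ with $\mathcal{E}_s$ isogenous to $(E_0)_{\bar K}$. Choose an isogeny $\varphi \colon (E_0)_{\bar K} \to \mathcal{E}_s$. Since $b$ is a unit in every $\mathbb{Z}/n\mathbb{Z}$, the finite subgroup $\ker \varphi \subset (E_0)_{\bar K}$ is set-theoretically preserved by $\sigma$, so $\ker \sigma(\varphi) = \sigma(\ker \varphi) = \ker \varphi$. Hence there is a unique isomorphism $\iota \colon \mathcal{E}_s \to \mathcal{E}_{\sigma(s)}$ with $\sigma(\varphi) = \iota \circ \varphi$. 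For any torsion $Q \in \mathcal{E}_s$, choosing a torsion lift $P \in (E_0)_{\bar K}$ with $\varphi(P) = Q$ yields
\[ \sigma(Q) \;=\; \sigma(\varphi)(\sigma(P)) \;=\; \iota\bigl(\varphi(bP)\bigr) \;=\; b\,\iota(Q). \]

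The main step is to promote this identity to $\sigma(s) = s$ and $\iota \in \{\pm \id_{\mathcal{E}_s}\}$. Applied to the three non-trivial $2$-torsion points $(0,0)$, $(1,0)$, $(s,0) \in \mathcal{E}_s$ in the Legendre model, and using that $b \equiv 1 \pmod 2$, the identity reduces to $\iota((0,0)) = (0,0)$, $\iota((1,0)) = (1,0)$ and $\iota((s,0)) = (\sigma(s),0)$ inside $\mathcal{E}_{\sigma(s)}$. Writing $\iota$ as a general Weierstrass change of variables $(x,y) \mapsto (u^2 x + r,\, u^3 y + u^2 s' x + t)$ between the two Legendre curves and imposing these three constraints in turn pins down $r = t = s' = 0$, $u^2 = 1$, and finally $\sigma(s) = s$, leaving $\iota$ equal to $(x,y) \mapsto (x, \pm y) = \pm \id_{\mathcal{E}_s}$. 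I expect this rigidification step to be the one that requires genuine care: it is precisely what prevents $\sigma$ from permuting the six Legendre parameters over a given $j$-invariant, and the restriction to automorphisms fixing an ordered $2$-torsion basis is what neutralises exotic CM automorphisms.

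Everything else is formal. Since $\pm \id$ is defined over $\mathbb{Q}$, we have $\sigma(\iota) = \iota$ and therefore
\[ \sigma^{\circ 2}(\varphi) \;=\; \sigma(\iota) \circ \iota \circ \varphi \;=\; \iota^{\circ 2} \circ \varphi \;=\; \varphi, \]
so $\sigma^{\circ 2}$ fixes $\varphi$; running the same argument with an arbitrary isogeny $(E_0)_{\bar K} \to \mathcal{E}_s$ in place of $\varphi$ shows that $\sigma^{\circ 2}$ fixes every such isogeny. Iterating $\sigma(Q) = b\iota(Q)$ and using $\iota^{\circ 2} = \id$ gives $\sigma^{\circ 2}(Q) = b \cdot \sigma(\iota(Q)) = b^2 \iota^{\circ 2}(Q) = b^2 Q$ for every torsion $Q \in \mathcal{E}_s$, which completes the second assertion; the first then follows by taking $\sigma_a = \tau_a^{\circ 2}$, so that $\sigma_a$ acts on the torsion of $\mathcal{E}_s$ as $(a^c)^2 = a^{2c}$.
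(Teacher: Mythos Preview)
Your argument is correct and follows essentially the same route as the paper: you show that any $\sigma$ acting as a homothety on $(E_0)_{\bar K}$-torsion fixes $\ker\varphi$, hence induces an isomorphism $\iota\colon \mathcal{E}_s \to \mathcal{E}_{\sigma(s)}$ with $\sigma(\varphi)=\iota\circ\varphi$, and then use that $\iota$ must fix the labelled $2$-torsion to force $\sigma(s)=s$ and $\iota=\pm\id_{\mathcal{E}_s}$. The only difference is cosmetic: the paper condenses your explicit Weierstrass change-of-variables computation into the phrase ``because of the nature of the Legendre family'', and derives $\psi(p_0)=\sigma(p_0)$ for $2$-torsion $p_0$ directly rather than via your general formula $\sigma(Q)=b\,\iota(Q)$.
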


\begin{proof}
Suppose that $\sigma \in \Gal(\bar{K}/K)$ acts as a homothety on the torsion of $(E_0)_{\bar{K}}$ and let $s \in Y(2)(\bar{K})$ be such that $\mathcal{E}_s$ is isogenous to $(E_0)_{\bar{K}}$. Let $\varphi: (E_0)_{\bar{K}} \to \mathcal{E}_{s}$ be any isogeny.

Since $\sigma$ acts as a homothety on the torsion of $(E_0)_{\bar{K}}$, it fixes $\ker \varphi$. By Theorem 5.13 in \cite{MR3729270}, there exists an isomorphism $\psi: \mathcal{E}_s \to \mathcal{E}_{\sigma(s)}$ such that $\sigma(\varphi) = \psi \circ \varphi$. Let $p_0 \in (\mathcal{E}_{s})_{\tors}$ be a point of order $2$ and let $q_0 \in (E_0)_{\tors}$ be a pre-image of $p_0$ under $\varphi$. Since $\sigma$ acts as a homothety on the torsion of $(E_0)_{\bar{K}}$, we have $\sigma(q_0) = b_0q_0$ for some odd $b_0 \in \mathbb{N}$. Since $p_0$ has order $2$, it follows that
\[ \psi(p_0) = \psi(b_0p_0) = (\psi \circ \varphi)(b_0q_0) = \sigma(\varphi)(\sigma(q_0)) = \sigma(p_0).\]

Because of the nature of the Legendre family, we deduce that $\sigma(s) = s$, which implies that $\psi$ is an automorphism of $\mathcal{E}_s$ that restricts to the identity on the $2$-torsion of $\mathcal{E}_s$. It follows that $\psi = \pm \id_{\mathcal{E}_s}$ and $\sigma^{\circ 2}(\varphi) = \varphi$.

Suppose now that $\sigma$ acts on the torsion of $(E_0)_{\bar{K}}$ as multiplication by $b \in \hat{\mathbb{Z}}^{\ast}$ and let $p \in (\mathcal{E}_{s})_{\tors}$. There exists $q \in (E_0)_{\tors}$ such that $\varphi(q) = p$. By the above, we know that $\sigma^{\circ 2}(s) = s$, $\sigma^{\circ 2}(\varphi) = \varphi$, and
\begin{align*}
\sigma^{\circ 2}(p) = \sigma^{\circ 2}(\varphi)\left(\sigma^{\circ 2}(q)\right) \\
= \varphi\left(b^{2}q\right) = b^{2}\varphi(q) = b^{2}p.
\end{align*}
The lemma follows.
\end{proof}

The following theorem is due to Lombardo in the non-CM case and Bourdon-Clark in the CM case:

\begin{thm}[Lombardo, Bourdon-Clark]\label{thm:lombardo}
Let $K$ be a number field with a fixed algebraic closure $\bar{K}$. Let $E_0$ be an elliptic curve, defined over $K$, let $j(E_0) \in K$ denote its $j$-invariant, and let $h(E_0)$ denote its stable Faltings height. The Serre constant $c = c(E_0/K)$ as defined in Lemma \ref{lem:uniformhomothety} satisfies $c \leq C$, where
\[ C = \begin{cases}
\exp\left(1.9 \cdot 10^{10}\right)\left([K:\mathbb{Q}]\max\{1,h(E_0),\log [K:\mathbb{Q}]\}\right)^{12395} & \text{($E_0$ has no CM),}\\
6[K:\mathbb{Q}(j(E_0))] & \text{($E_0$ has CM)}.\\
\end{cases}\]
More precisely, there is a subgroup $G \subset \hat{\mathbb{Z}}^{\ast}$ of index at most $C$ such that for every $a \in G$, there exists a $\sigma \in \Gal(\bar{K}/K)$ that acts on the torsion of $(E_0)_{\bar{K}}$ as multiplication by $a$.
\end{thm}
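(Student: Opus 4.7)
The plan is to reduce both statements to effective versions of Serre's Open Image Theorem by passing through the adelic Galois representation on the Tate module. Consider the continuous representation
\[
\rho \colon \Gal(\bar K/K) \longrightarrow \Aut\Bigl(\varprojlim_n (E_0)_{\bar K}[n]\Bigr) \cong \GL_2(\hat{\mathbb Z}),
\]
where a basis of the Tate module is fixed. The subgroup of scalar matrices is naturally identified with $\hat{\mathbb Z}^{\ast}$, and by construction an element $a \in \hat{\mathbb Z}^{\ast}$ lies in the image $\rho(\Gal(\bar K/K)) \cap \hat{\mathbb Z}^{\ast}$ if and only if there exists $\sigma \in \Gal(\bar K/K)$ acting on the torsion of $(E_0)_{\bar K}$ as multiplication by $a$. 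Setting $G := \rho(\Gal(\bar K/K)) \cap \hat{\mathbb Z}^{\ast}$, the ``more precise" part of the theorem becomes a bound on the index $[\hat{\mathbb Z}^{\ast}:G]$, from which $c(E_0/K) \leq [\hat{\mathbb Z}^{\ast}:G]$ follows at once: the quotient $\hat{\mathbb Z}^{\ast}/G$ is a finite abelian group whose order divides $[\hat{\mathbb Z}^{\ast}:G]$, so raising any $a \in \hat{\mathbb Z}^{\ast}$ to the $[\hat{\mathbb Z}^{\ast}:G]$-th power lands in $G$.

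In the non-CM case, I would invoke Lombardo's effective adelic open image theorem from \cite{MR3437765}, which supplies an explicit upper bound on $[\GL_2(\hat{\mathbb Z}) : \rho(\Gal(\bar K/K))]$ in terms of $[K:\mathbb Q]$ and $\max\{1,h(E_0),\log[K:\mathbb Q]\}$. Since the index of $G = \rho(\Gal(\bar K/K)) \cap \hat{\mathbb Z}^{\ast}$ in $\hat{\mathbb Z}^{\ast}$ is bounded above by the index of $\rho(\Gal(\bar K/K))$ in $\GL_2(\hat{\mathbb Z})$ (intersecting with any subgroup can only decrease or preserve the index), the resulting bound on $[\hat{\mathbb Z}^{\ast}:G]$ is exactly of the shape $\exp(1.9\cdot 10^{10})\bigl([K:\mathbb Q]\max\{1,h(E_0),\log [K:\mathbb Q]\}\bigr)^{12395}$, after chasing constants through Lombardo's statement.

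In the CM case, the image of $\rho$ no longer fills up $\GL_2(\hat{\mathbb Z})$; instead it lies inside the normalizer $N$ of the Cartan subgroup attached to the CM order $\End((E_0)_{\bar K}) \hookrightarrow \M_2(\hat{\mathbb Z})$. The scalars $\hat{\mathbb Z}^{\ast}$ still sit inside the Cartan part, so the same index-intersection argument reduces the statement to an effective bound on $[N:\rho(\Gal(\bar K/K))]$, for which one applies the work of Bourdon and Clark \cite{BourdonClark} (refining Stevenhagen \cite{Stevenhagen}). Their result supplies precisely the bound $6[K:\mathbb Q(j(E_0))]$, and the factor $6$ accounts both for the index of the Cartan in its normalizer and for the local obstructions at the primes dividing the conductor.

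The main conceptual step is not hard; all the quantitative content is packed into the cited theorems. The single subtlety worth verifying carefully is the index-intersection bookkeeping: one must be sure that $[\hat{\mathbb Z}^{\ast}:G] \leq [H : \rho(\Gal(\bar K/K))]$ where $H$ is the ambient group ($\GL_2(\hat{\mathbb Z})$ in the non-CM case, $N$ in the CM case). This follows from the standard injection $\hat{\mathbb Z}^{\ast}/G \hookrightarrow H / \rho(\Gal(\bar K/K))$ induced by the inclusion $\hat{\mathbb Z}^{\ast} \hookrightarrow H$. Beyond this, the proof is a direct translation of \cite{MR3437765} and \cite{BourdonClark} into the language of homotheties on torsion.
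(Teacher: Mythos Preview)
Your proposal is correct and follows essentially the same route as the paper: both reduce to citing Lombardo \cite{MR3437765} in the non-CM case and Bourdon--Clark \cite{BourdonClark} in the CM case, with your write-up supplying more of the surrounding group-theoretic framework. Two minor points of precision the paper adds: in the non-CM case the stated constant requires the \emph{improved} version of Lombardo's Corollary 9.3 from his Remark 9.4, which in turn depends on Corollaire 17.5 of Gaudron--R\'emond \cite{GR20}; in the CM case the paper invokes Corollary 1.5 of \cite{BourdonClark} directly (it is already phrased in terms of homotheties, not as an index bound in the Cartan normalizer), together with the elementary inequality $[KL:L(j(E_0))] \leq [K:\mathbb{Q}(j(E_0))]$ where $L$ is the CM field, rather than your normalizer/Cartan bookkeeping.
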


In the CM case, Eckstein had earlier obtained the weaker bound $48[K:\mathbb{Q}]$ for the Serre constant (Th\'eor\`eme 7 in \cite{Eckstein}), which Lombardo improved to $6[K:\mathbb{Q}]$ in Theorem 6.6 in \cite{MR3766118}. As Bourdon and Clark remark, their result is a consequence of earlier work of Stevenhagen \cite{Stevenhagen}.

\begin{proof}
In the non-CM case, the theorem follows from the improved version of Corollary 9.3 in \cite{MR3437765} that is mentioned in Remark 9.4 in \cite{MR3437765}. The result by Gaudron and R\'emond that is needed for this improvement and that was still unpublished when \cite{MR3437765} appeared is Corollaire 17.5 in \cite{GR20}.

In the CM case, let $L \subset \bar{K}$ denote the imaginary quadratic field such that $\left(\End (E_0)_{\bar{K}}\right) \otimes_{\mathbb{Z}} \mathbb{Q} \simeq L$. Since $[KL:L(j(E_0))] \leq [K:\mathbb{Q}(j(E_0))]$, the theorem follows from Corollary 1.5 in \cite{BourdonClark}.
\end{proof}

We can now prove Theorem \ref{thm:effectiveisogenymm}, an explicit instance of Theorem \ref{thm:isogenymaninmumford}.

\begin{proof}[Proof of Theorem \ref{thm:effectiveisogenymm}]
Let $p \in \mathcal{C}(\bar{K})$ be torsion on $\mathcal{E}_{\pi(p)}$ such that $\mathcal{E}_{\pi(p)}$ is isogenous to $(E_0)_{\bar{K}}$. Note that $K(\pi(p)) \subset K(p) \subset \bar{K}$. We will obtain a lower and an upper bound for the degree $[K(p):K(\pi(p))]$ that are incompatible with each other if the order of $p$ is sufficiently large.

For the upper bound, note that by our hypothesis $\{p\} \subset \mathcal{E}_{\pi(p)} \subset \{\pi(p)\} \times_{\bar{K}} \mathbb{P}_{\bar{K}}^{2}$ is an irreducible component of the intersection of the zero loci of $Y^2Z - X(X-Z)(X-\pi(p) Z)$ and some homogeneous polynomial of degree at most $D_2$ in $X,Y,Z$ with coefficients in $K(\pi(p))$. It follows from Theorem \ref{thm:bezout} that
\begin{equation}\label{eq:upperbound}
[K(p):K(\pi(p))] \leq 3D_2.
\end{equation}

For the lower bound, let $G \subset \hat{\mathbb{Z}}^{\ast}$ be the subgroup furnished by Theorem \ref{thm:lombardo}. By Theorem \ref{thm:lombardo}, the index of $G$ in $\hat{\mathbb{Z}}^{\ast}$ is less than or equal to $C$ for $C$ as in Theorem \ref{thm:effectiveisogenymm}. By Theorem \ref{thm:lombardo} together with Lemma \ref{lem:uniformhomothety}, for any $a \in G$, there is a $\sigma_a \in \Gal(\bar{K}/K)$ that fixes $\pi(p)$ and acts on the torsion of $\mathcal{E}_{\pi(p)}$ as multiplication by $a^{2}$.

Let $N$ be the order of $p$. We can suppose that $N \geq 3$. We obtain a first lower bound
\[[K(p):K(\pi(p))] \geq |Gp| \geq \frac{\phi(N)}{2^{\omega(N)}(2C)}.\]

By Th\'eor\`eme 11 in \cite{MR736719}, we have $\omega(N) \leq \frac{7\log N}{5 \log \log N}$. We can also estimate
\[ \phi(N) = N\prod_{p' | N}{\frac{p'-1}{p'}} \geq N\prod_{j=2}^{\omega(N)+1}{\frac{j-1}{j}} = \frac{N}{\omega(N)+1} \geq \frac{N}{2\log N +1},\]
where the product runs over all primes $p' \in \mathbb{N}$ that divide $N$. It follows that
\begin{equation}\label{eq:firstupperbound}
[K(p):K(\pi(p))] \geq \frac{N^{1-\frac{7\log 2}{5 \log \log N}}}{2C(2\log N +1)}.
\end{equation}

We now assume that $N \geq \exp\left(2^{\frac{18}{5}}\right) \geq \exp(12)$. The function $x \mapsto x^{\frac{13}{36}} \cdot (2(2\log x+1))^{-1}$ is monotonically increasing for $x \geq \exp(59/26)$. It follows that
\[ \frac{N^{\frac{13}{36}}}{2(2 \log N+1)} \geq \frac{\exp(13/3)}{50} > 1.\]
Together with $N \geq \exp\left(2^{\frac{18}{5}}\right)$ and \eqref{eq:firstupperbound}, this implies that
\[ [K(p):K(\pi(p))] \geq \frac{N^{\frac{11}{18}}}{2C(2 \log N +1)} > C^{-1}N^{\frac{1}{4}}.\]
Combining this bound with \eqref{eq:upperbound}, we deduce that
\[ N \leq (3CD_2)^{4}.\qedhere\]
\end{proof}

We next prove Theorem \ref{thm:effectiveisogenyml}.

\begin{proof}[Proof of Theorem \ref{thm:effectiveisogenyml}]
Let $p \in \mathcal{C}(\bar{\mathbb{Q}})$ such that $p = \varphi(q)$ for some isogeny $\varphi: (E_0)_{\bar{\mathbb{Q}}} \to \mathcal{E}_{\pi(p)}$ with cyclic kernel and a non-torsion point $q \in E_0(\bar{\mathbb{Q}})$ in the divisible hull of $E_0(\mathbb{Q})$. Let $N$ be the smallest natural number such that $Nq \in E_0(\mathbb{Q})$ and let $N_1$ be the smallest natural number such that $N_1q \in E_0(\mathbb{Q})+(E_0)_{\tors}$. Let $q_r \in E_0(\mathbb{Q})$ and $q_t \in (E_0)_{\tors}$ be such that $N_1q = q_r + q_t$. Let $N_2$ denote the order of $q_t$. Since $N$ divides $N_1N_2$, it suffices to bound $N_1$ as well as $\deg \varphi$ and $N_2$.

Set $E_0(\mathbb{Q})_{\tors} = E_0(\mathbb{Q}) \cap (E_0)_{\tors}$. Let $m$ be the largest natural number such that the class of $q_r$ in $E_0(\mathbb{Q})/E_0(\mathbb{Q})_{\tors}$ is divisible by $m$. Let $\tilde{q}_r \in E_0(\mathbb{Q})$ satisfy $m\tilde{q}_r - q_r \in E_0(\mathbb{Q})_{\tors}$. If $\gcd(m,N_1) > 1$, then $(N_1\gcd(m,N_1)^{-1})q \in E_0(\mathbb{Q})+(E_0)_{\tors}$, a contradiction. So $\gcd(m,N_1) = 1$.

We choose $\tilde{q} \in E_0(\bar{\mathbb{Q}})$ such that $N_1\tilde{q} = \tilde{q}_r$. It follows that $\tilde{q}_t := m\tilde{q} - q$ lies in $(E_0)_{\tors}$. By Theorem 6.5 in \cite{LT21}, an explicit version of Theorem 1.2 in \cite{LT19}, we have
\[ [\mathbb{Q}(\tilde{q},E_0[M_1]):\mathbb{Q}(E_0[M_1])] \geq 2^{-126}N_1^2\]
for every natural number $M_1$ that is divisible by $N_1$. We can choose $M_1$ so that it is divisible by $N_1N_2$ as well as by $2\deg \varphi$ and the order of $\tilde{q}_t$.

We then have $\mathbb{Q}(\tilde{q},E_0[M_1]) \supset \mathbb{Q}(q,E_0[M_1])$ since $m\tilde{q} = q + \tilde{q}_t$ and $\tilde{q}_t \in E_0[M_1]$. But since $m$ and $N_1$ are coprime and $N_1\tilde{q} = \tilde{q}_r \in E_0(\mathbb{Q})$, the two fields are equal. We deduce that the extension $\mathbb{Q}(q,E_0[M_1])/\mathbb{Q}(E_0[M_1])$ is Galois and that
\begin{equation}\label{eq:lowerkummerbund}
[\mathbb{Q}(q,E_0[M_1]):\mathbb{Q}(E_0[M_1])] \geq 2^{-126}N_1^2.
\end{equation}

There is an injective group homomorphism $\rho: \Gal(\mathbb{Q}(q,E_0[M_1])/\mathbb{Q}(E_0[M_1])) \hookrightarrow E_0[N] \simeq (\mathbb{Z}/N\mathbb{Z})^2$ given by $\sigma \mapsto \sigma(q)-q$. Therefore, the Galois group $\Gal(\mathbb{Q}(q,E_0[M_1])/\mathbb{Q}(E_0[M_1]))$ is isomorphic to a product of two cyclic groups. Its image under $\rho$ is even contained in $E_0[N_1]$ since $q_t \in E_0[M_1]$ and therefore
\[ N_1(\sigma(q)-q) = \sigma(N_1q) - N_1q = \sigma(q_r+q_t)-N_1q = q_r+q_t-N_1q = 0_{E_0}\]
for all $\sigma \in \Gal(\mathbb{Q}(q,E_0[M_1])/\mathbb{Q}(E_0[M_1]))$.

Any $\sigma \in \Gal(\bar{\mathbb{Q}}/\mathbb{Q}(E_0[M_1]))$ fixes the set $\left\{u \in E_0(\bar{\mathbb{Q}}); 2\varphi(u) = 0_{\mathcal{E}_{\pi(p)}}\right\}$ pointwise and hence $\tau = \sigma^{\circ 2}$ fixes $\pi(p)$ as well as $\varphi$ (cf. the proof of Lemma \ref{lem:uniformhomothety}). Furthermore, we have $\tau(p) = p$ if and only if $\varphi(\tau(q)) = \varphi(q)$ if and only if $\tau|_{\mathbb{Q}(q,E_0[M_1])}$ is mapped into $\ker \varphi \cap E_0[N_1]$ by $\rho$. Since $\ker \varphi$ is cyclic, we find that
\begin{align*}
[\mathbb{Q}(p):\mathbb{Q}(\pi(p))] \geq 2^{-2}2^{-126}|\ker \varphi \cap E_0[N_1]|^{-1}N_1^2 \\
\geq 2^{-128}N_1.
\end{align*}

On the other hand, we have
\begin{equation}\label{eq:boundfordegpoverpip}
[\mathbb{Q}(p):\mathbb{Q}(\pi(p))] \leq 3D_2
\end{equation}
as in the proof of Theorem \ref{thm:effectiveisogenymm}. This yields that
\begin{equation}\label{eq:boundfornone}
N_1 \leq 3 \cdot 2^{128} \cdot D_2 \leq 2^{130}D_2.
\end{equation}

We proceed with bounding $\deg \varphi$. Since each fiber $\mathcal{E}_s$ ($s \in Y(2)(\bar{\mathbb{Q}})$) is canonically embedded in $\mathbb{P}_{\bar{\mathbb{Q}}}^2$ by means of a Weierstrass model, we get an associated canonical height $\widehat{h}_{\mathcal{E}_{s}}: \mathcal{E}_s(\bar{\mathbb{Q}}) \to [0,\infty)$ on each fiber. We define a canonical height $\widehat{h}_{\mathcal{E}}: \mathcal{E}(\bar{\mathbb{Q}}) \to [0,\infty)$ by $\widehat{h}_{\mathcal{E}}(p') = \widehat{h}_{\mathcal{E}_{\pi(p')}}(p')$ for $p' \in \mathcal{E}(\bar{\mathbb{Q}})$.

We will bound $\deg \varphi$ by obtaining a lower and an upper bound for $\widehat{h}_{\mathcal{E}}(p)$ that are incompatible with each other for $\deg \varphi$ large enough. Let $\widehat{h}_{E_0}$ denote the canonical height on $E_0(\bar{\mathbb{Q}})$ induced by some Weierstrass model. The lower bound is easy: It follows from standard properties of the canonical height that
\begin{equation}\label{eq:heightlowerkummerbound}
\widehat{h}_{\mathcal{E}}(p) = (\deg \varphi)\widehat{h}_{E_0}(q) = N_1^{-2}(\deg \varphi)\widehat{h}_{E_0}(N_1q) = N_1^{-2}(\deg \varphi)\widehat{h}_{E_0}(q_r) \geq N_1^{-2}(\deg \varphi)h_0,
\end{equation}
where
\[ h_0 = \min_{q' \in E_0(\mathbb{Q})\backslash E_0(\mathbb{Q})_{\tors}}{\widehat{h}_{E_0}(q')} > 0.\]

For the upper bound, suppose that $p$ maps to $[x:y:z] \in \mathbb{P}^2(\bar{\mathbb{Q}})$ under the composition $\iota$ of the immersion $\mathcal{E} \hookrightarrow \mathbb{P}_{\mathbb{Q}}^1 \times_{\mathbb{Q}} \mathbb{P}_{\mathbb{Q}}^2$ with the projection to the second factor. Since $p$ is non-torsion, we have $z \neq 0$ and can assume without loss of generality that $z=1$. Set $\lambda = \pi(p) \in Y(2)(\bar{\mathbb{Q}})$ and $\iota_\lambda = \iota_{\bar{\mathbb{Q}}}|_{\mathcal{E}_\lambda}$.

It follows from the proof of Lemma 4.4 in \cite{MR3181568} that
\[ h(\iota_\lambda(2p')) \leq 4h(\iota_\lambda(p'))+ 3h(\lambda)+\log 72\]
for every $p' \in \mathcal{E}_{\lambda}(\bar{\mathbb{Q}})$, where $h$ denotes the height on $\mathbb{P}^2(\bar{\mathbb{Q}})$ and $\mathbb{P}^1(\bar{\mathbb{Q}}) \supset Y(2)(\bar{\mathbb{Q}})$. This implies that
\begin{equation}\label{eq:heightupperkummerbound}
\widehat{h}_{\mathcal{E}}(p) = \lim_{n \to \infty}{\frac{h(\iota_\lambda(2^np))}{4^n}} \leq h([x:y:1])+\frac{1}{3}(3h(\lambda)+\log 72) \leq h([x:y:1])+3\max\{1,h(\lambda)\}.
\end{equation}

Let $P((U,V),(X,Y,Z)) \in \mathbb{Q}[U,V,X,Y,Z]$ be one of the bihomogeneous polynomials of bidegree at most $(D_1,D_2)$ and height at most $\mathcal{H}$ defining $\mathcal{C}$. Set $Q(X,Y,Z) = P((\lambda,1),(X,Y,Z))$, a homogeneous polynomial of degree at most $D_2$. By hypothesis, we can choose $P$ such that $Q(X,Y,Z)$ and $F(X,Y,Z) = Y^2Z-X(X-Z)(X-\lambda Z)$ have at most finitely many common zeroes in $\mathbb{P}^2(\bar{\mathbb{Q}})$, among them $[x:y:1]$.

In \cite{MR1341770}, Philippon defines the (non-negative) height $h(V)$ of an irreducible subvariety $V$ of projective space over $\bar{\mathbb{Q}}$ (by Proposition 1.28 in \cite{MR3098424}, the height is independent from the choice of number field over which $V$ is defined). If $h_{\mathrm{R\acute{e}m}}$ denotes the height of a multihomogeneous form with coefficients in $\bar{\mathbb{Q}}$ as defined in \cite{MR1837829}, then it follows from the arithmetic B\'ezout theorem in the form of Th\'{e}or\`{e}me 3.4 and Corollaire 3.6 in \cite{MR1837829} that
\[ h(\{[x:y:1]\}) \leq  D_2\left(h_{\mathrm{R\acute{e}m}}(F)+\frac{3}{2}\right) + 3\left(h_{\mathrm{R\acute{e}m}}(Q)+\frac{D_2}{2}+\frac{D_2}{2}\log 2\right). \]
Since $h_{\mathrm{R\acute{e}m}}(G) \leq h(G) + \log\binom{\deg(G)+2}{2} + \frac{3}{4}\deg(G) \leq h(G) + \frac{9}{4}\deg(G)$ for $G \in \{F,Q\}$, where $\deg$ denotes the degree of a homogeneous form, we deduce that
\[ h(\{[x:y:1]\}) \leq 3h(Q) + D_2h(F) + 18D_2.\]

The height of $Q$ is bounded from above by $\log(D_1+1)+\mathcal{H}+D_1h(\lambda)$ while the height of $F$ is bounded from above by $2\max\{1,h(\lambda)\}$. Using that $h(p') \leq h(\{p'\})$ for any $\bar{\mathbb{Q}}$-point $p'$ of projective space by the computation at the bottom of p. 96 of \cite{MR1837829}, we deduce that
\begin{equation}\label{eq:boundforxyz}
h([x:y:1]) \leq 3h(Q) + D_2h(F) + 18D_2 \leq 20((D_1+D_2)\max\{1,h(\lambda)\}+\mathcal{H}).
\end{equation}

We proceed with bounding $h(\lambda)$. Set $j = j(\mathcal{E}_\lambda)$. From the equality
\[ \lambda^6 = \frac{1}{256}j\lambda^2(\lambda-1)^2+3\lambda^5-6\lambda^4+7\lambda^3-6\lambda^2+3\lambda-1\]
we can deduce the rather crude bound
\[ h(\lambda) \leq h(j) + \log 256 +\log 30.\]
It then follows from Theorem 1.1 in \cite{MR3925753} that
\begin{equation}\label{eq:boundforl}
h(\lambda) \leq h(j(E_0))+19+12\log \deg \varphi.
\end{equation}

Combining \eqref{eq:boundforxyz} and \eqref{eq:boundforl} yields
\begin{multline}\label{eq:boundforxyztoo}
h([x:y:1]) \leq 400((D_1+D_2)\max\{1,h(j(E_0))\}+\mathcal{H}+(D_1+D_2)\log \deg \varphi).
\end{multline}

Combining \eqref{eq:heightlowerkummerbound}, \eqref{eq:heightupperkummerbound}, \eqref{eq:boundforl}, and \eqref{eq:boundforxyztoo} and using $D_1 + D_2 \geq 2$ yields
\begin{multline}\label{eq:upperandlowerboundtogether}
N_1^{-2}(\deg \varphi)h_0 \leq 430((D_1+D_2)\max\{1,h(j(E_0))\}+\mathcal{H}+(D_1+D_2)\log \deg \varphi).
\end{multline}

Note that the inequality $A_3 \deg \varphi \leq A_1+A_2\log \deg \varphi$ (with $A_1,A_2,A_3 > 0$) implies that
\[ \deg \varphi \leq \max\left\{\frac{2A_1}{A_3},\frac{4A_2^2}{A_3^2}\right\}.\]
In order to show this, we may assume $A_3 = 1$ then if $\deg \varphi \geq 2A_1$ we have $\deg \varphi \leq 2A_2\log \deg \varphi \leq 2A_2(\deg \varphi)^{\frac{1}{e}}$ and hence $2A_2 \geq 1$ and $\deg \varphi \leq (2A_2)^{\frac{e}{e-1}} \leq 4A_2^2$.

We therefore obtain from \eqref{eq:upperandlowerboundtogether} that
\begin{equation}\label{eq:degphifirstbound}
 \deg \varphi \leq 2^{22}\max\{D_1,D_2,\mathcal{H}\}^{2}\max\{1,h(j(E_0))\}\max\{1,h_0^{-1}\}^2N_1^4.
 \end{equation}

It follows from Lemmas 2.6 and 3.2 in \cite{MR3925753} that
\[ h(j(E_0)) \leq 12h(E_0)+25+6\log(1+h(j(E_0))).\]
Since $12\log(1+u) \leq u$ for $u \in [50,\infty)$, this implies that
\[ \max\{1,h(j(E_0))\} \leq 74\max\{1,h(E_0)\}.\]
We deduce from this together with the upper bound for $h_0^{-1}$ in terms of $h(E_0)$ from Th\'eor\`eme 1.3 in \cite{MR3943753}, \eqref{eq:boundfornone}, and \eqref{eq:degphifirstbound} that
\begin{equation}\label{eq:boundfordegvarphi}
\deg \varphi \leq \max\{2,h(E_0)\}^{12698}\max\{D_1,D_2,\mathcal{H}\}^{6}.
\end{equation}

It remains to bound $N_2$. Recall that $N_1q = q_r + q_t$ with $q_r \in E_0(\mathbb{Q})$ and $q_t \in (E_0)_{\tors}$ of order $N_2$. Let $G \subset \hat{\mathbb{Z}}^{\ast}$ be the subgroup furnished by Theorem \ref{thm:lombardo} and let $C$ be as in Theorem \ref{thm:lombardo}. By Theorem \ref{thm:lombardo}, the index of $G$ in $\hat{\mathbb{Z}}^{\ast}$ is bounded by $C$. We deduce from Theorem \ref{thm:lombardo} together with Lemma \ref{lem:uniformhomothety} that for each $a \in G$, there is $\sigma \in \Gal(\bar{\mathbb{Q}}/\mathbb{Q})$ such that $\sigma(\pi(p)) = \pi(p)$ and
\begin{align*}
\sigma(N_1p) = \sigma(\varphi(q_r+q_t)) = \varphi(q_r+\sigma(q_t)) = \\
\varphi(q_r) + \sigma(\varphi(q_t)) = \varphi(q_r) + a^{2}\varphi(q_t).
\end{align*}

Let $N_3$ denote the order of $\varphi(q_t)$. It follows that
\[ [\mathbb{Q}(N_1p):\mathbb{Q}(\pi(p))] \geq \frac{\phi(N_3)}{2^{\omega(N_3)}(2C)}.\]
If $N_3 \geq 3$, then this implies as in the proof of Theorem \ref{thm:effectiveisogenymm} that
\[ [\mathbb{Q}(N_1p):\mathbb{Q}(\pi(p))] \geq \frac{N_3^{1-\frac{7\log 2}{5 \log \log N_3}}}{2C(2\log N_3 +1)}.\]

On the other hand, we have
\[ [\mathbb{Q}(N_1p):\mathbb{Q}(\pi(p))] \leq [\mathbb{Q}(p):\mathbb{Q}(\pi(p))] \leq 3D_2\]
by \eqref{eq:boundfordegpoverpip}. Suppose that $N_3 \geq \exp(1.9\cdot10^{10})$. It follows that
\[ 3D_2 \geq \frac{N_3^{1-\frac{7\log 2}{5 \log\left(1.9\cdot10^{10}\right)}}}{2C(2\log N_3 +1)} \geq \frac{N_3^{\frac{19}{20}}}{2C(2\log N_3 +1)}.\]

The function $x \mapsto x^{\frac{1}{20}} \cdot (2(2\log x+1))^{-1}$ is monotonically increasing for $x \geq \exp(39/2)$ and its value at $\exp(1.9\cdot10^{10})$ is larger than $1$. We deduce that
\[  \frac{N_3^{\frac{19}{20}}}{2C(2\log N_3 +1)} \geq \frac{N_3^{\frac{9}{10}}}{C},\]
which implies together with the above that
\[ N_3 \leq \max\left\{(3CD_2)^{\frac{10}{9}},\exp(1.9\cdot10^{10})\right\}.\]
Thanks to Theorem \ref{thm:lombardo}, we obtain that
\[ N_3 \leq \exp\left(2.12\cdot10^{10}\right)\max\{1,h(E_0)\}^{13773}D_2^2.\]
Since $N \leq N_1N_2$ and $N_2 \leq N_3(\deg \varphi)$, we now obtain the desired bound for $N$ from \eqref{eq:boundfornone} and \eqref{eq:boundfordegvarphi}.
\end{proof}

\section{Explicit results in the Legendre family -- varieties}\label{sec:legendrevar}

In this section, we will prove Theorem \ref{thm:effectiveisogenymmposdim}. Recall that $\mathcal{E}^{(g)}$ denotes the $g$-fold fibered power of the Legendre family, which admits a natural immersion into $\mathbb{P}^1_{\mathbb{Q}} \times_{\mathbb{Q}} \left(\mathbb{P}^2_{\mathbb{Q}}\right)^g$ ($g \in \mathbb{N}$). Unless explicitly stated otherwise, degrees of subvarieties of (base changes of) $\mathcal{E}^{(g)}$ will always be taken with respect to (base changes of) this immersion. The structural morphism of $\mathcal{E}^{(g)}$ is denoted by $\pi: \mathcal{E}^{(g)} \to Y(2) = \mathbb{A}^1_{\mathbb{Q}}\backslash\{0,1\}$. By abuse of notation, we denote the base change of $\pi$ to a number field $K$ by $\pi$ as well.

First, we prove a series of preliminary lemmata about elliptic curves and the Legendre family, concerning the degree of abelian subvarieties of powers of elliptic curves and the behaviour of the degree under homomorphisms.

\begin{lem}\label{lem:degabsub}
Let $K$ be a field with a fixed algebraic closure $\bar{K}$ and let $E \subset \mathbb{P}^2_K$ be a Weierstrass model of an elliptic curve with $\End E_{\bar{K}} \simeq \mathbb{Z}$. Let $g \in \mathbb{N}$ and let $B$ be an algebraic subgroup of $E^g \subset (\mathbb{P}_K^2)^{g}$ of codimension $k > 0$. Then there exists an effective constant $c(g)$, depending only on $g$, such that the following hold:
\begin{enumerate}
\item If $B$ is the kernel of a homomorphism from $E^g$ to $E^k$ induced by a $k \times g$-matrix $M_B$ with integer entries, then
\[\deg B = (g-k)!3^{g-k}\sum_{\Delta}{\Delta^2},\]
where the sum runs over the maximal minors of $M_B$.
\item If $B$ is irreducible, then $B$ is the kernel of a homomorphism from $E^g$ to $E^k$ induced by a $k \times g$-matrix $M_B$ with integer entries whose absolute value is bounded by $c(g)\sqrt{\deg B}$.
\end{enumerate}
\end{lem}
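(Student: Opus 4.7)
My plan breaks the lemma into its two parts, with (1) supplying the key input for (2).

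For part (1), I work intersection-theoretically on $E^g$. Let $L$ denote the restriction to $E^g$ of $\mathcal{O}(1, \ldots, 1)$ on $(\mathbb{P}_K^2)^g$; since the Segre embedding pulls $\mathcal{O}(1)$ back to $\mathcal{O}(1, \ldots, 1)$, we have $\deg B = \int_B c_1(L)^{g-k}$. Writing $c_1(L) = 3 \sum_{i=1}^{g} f_i$ with $f_i = p_i^{\ast}[\mathrm{pt}]$ the pullback of a point class via the $i$-th projection $p_i : E^g \to E$, the relations $f_i^2 = 0$ in the Chow ring of $E$ collapse the multinomial expansion to
\[
c_1(L)^{g-k} \;=\; (g-k)!\,3^{g-k} \sum_{|I| = g-k} \prod_{i \in I} f_i.
\]
For each such $I$, the intersection number $B \cdot \prod_{i \in I} f_i$ counts (for generic translations of the classes) the points of $B$ with prescribed $i$-th coordinates for $i \in I$; translating those prescribed values to $0$ identifies this with the scheme-theoretic kernel of the endomorphism of $E^k$ induced by the complementary $k \times k$ submatrix $M_{I^c}$ of $M_B$. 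Because $\End E_{\bar{K}} \simeq \mathbb{Z}$, this kernel has $(\det M_{I^c})^2$ points when the minor is nonzero, and the intersection is empty for generic choices otherwise. Summing over $I$ yields (1). The case of a non-connected $B$ is absorbed automatically: passing from $M_B$ to a matrix whose rows span the saturation of its row lattice rescales every maximal minor by the product of the Smith invariants of $M_B$, and the number of connected components of $B$ by the square of this same product.

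For part (2), I translate the geometric problem into a short-basis problem in the geometry of numbers. Given $B$ irreducible of codimension $k$, consider
\[
\Lambda_B \;=\; \{\, v \in \mathbb{Z}^g : \text{the homomorphism } E^g \to E \text{ induced by } v \text{ vanishes on } B\,\}.
\]
Then $\Lambda_B \subset \mathbb{Z}^g$ has rank $k$, and is saturated: if $n v \in \Lambda_B$ with $n \neq 0$, the image of $B$ under the homomorphism induced by $v$ is a connected subgroup of $E_{\bar{K}}$ annihilated by $n$, hence trivial, so $v \in \Lambda_B$. For any $\mathbb{Z}$-basis $v_1, \ldots, v_k$ of $\Lambda_B$, form the matrix $M_B$ whose rows are the $v_i$; saturation forces all Smith invariants of $M_B$ to equal $1$, so the induced homomorphism $E^g \to E^k$ has connected kernel, and since that kernel has the same tangent space as $B$, it equals $B$. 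Part (1) together with the Cauchy--Binet identity $\sum_\Delta \Delta^2 = \det(M_B M_B^{T})$ then gives
\[
\deg B \;=\; (g-k)!\,3^{g-k}\,\mathrm{covol}(\Lambda_B)^{2},
\]
pinning down $\mathrm{covol}(\Lambda_B)$ in terms of $\deg B$ up to a constant depending only on $g$.

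Finally, I extract a basis with small entries using classical geometry of numbers. By Minkowski's second theorem, the successive minima $\lambda_1 \leq \cdots \leq \lambda_k$ of $\Lambda_B$ (Euclidean norm on $\mathbb{R}^g$) satisfy $\prod_i \lambda_i \leq c_1(k)\,\mathrm{covol}(\Lambda_B)$; integrality of $\Lambda_B$ forces $\lambda_i \geq 1$, hence $\lambda_k \leq c_1(k)\,\mathrm{covol}(\Lambda_B)$. A classical refinement (e.g.\ Mahler's theorem on reduced bases) furnishes a $\mathbb{Z}$-basis of $\Lambda_B$ with $\|v_i\|_2 \leq c_2(k)\,\lambda_i$; combining yields $|(M_B)_{ij}| \leq \|v_i\|_{\infty} \leq c(g)\sqrt{\deg B}$, as required. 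The principal technical point I anticipate is verifying that the scheme-theoretic kernel coincides with $B$ rather than merely containing it as a component of finite index; this is precisely what the saturation of $\Lambda_B$ delivers, and motivates starting from the annihilator lattice instead of any convenient generating set. The remaining ingredients---the collapse of $c_1(L)^{g-k}$ on $E^g$ using $f_i^2 = 0$ and the Minkowski/Mahler lattice estimates---are routine.
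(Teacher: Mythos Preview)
Your proposal is correct and follows essentially the same route as the paper. Both parts proceed identically in spirit: for (1), the paper expands $[B]\cdot(l_1+\cdots+l_g)^{g-k}$ in $(\mathbb{P}^2_K)^g$ using $[B]\cdot l_i^2=0$ and identifies each cross-term via the projection formula as $3^{g-k}\#(\ker \pi_I|_B)=3^{g-k}\Delta_{I^c}^2$, which is the same computation you carry out on $E^g$ with $f_i^2=0$; for (2), the paper also forms the annihilator lattice, uses irreducibility to obtain saturation (hence that the kernel is connected and equals $B$), applies Cauchy--Binet to read off the covolume from part (1), and then combines Minkowski's second theorem with a short-basis result (Cassels, Section VIII.5.2) exactly as you do with Mahler's theorem.
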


\begin{proof}
For (1), let $l_i$ denote the class modulo rational equivalence of the pull-back of a line in $\mathbb{P}^2_K$ to $\left(\mathbb{P}^2_K\right)^g$ under projection to the $i$-th factor ($i=1,\hdots,g$). If $[B]$ denotes the class modulo rational equivalence of $B \subset (\mathbb{P}_K^2)^g$, then $[B] \cdot l_i^{\cdot 2} = 0$ for all $i$ and so
\[ \deg B = [B] \cdot (l_1+\cdots+l_g)^{\cdot (g-k)} = (g-k)!\sum_{I \subset \{1,\hdots,g\},|I| = g-k}{[B]\cdot\prod_{i \in I}{l_i}}.\]
If $\pi_I: E^g \to E^{g-k}$ denotes the projection associated to $I \subset \{1,\hdots,g\}$, then the projection formula implies that
\[ [B]\cdot\prod_{i \in I}{l_i} = \begin{cases} 3^{g-k}\#(\ker \pi_I|_B) & \text{if $\pi_I|_B$ is finite},\\
0 & \text{otherwise.}
\end{cases}\]
The formula from the lemma now follows.

For (2), set
\[ \Lambda = \{(a_1,\hdots,a_g) \in \mathbb{Z}^g; a_1x_1+\cdots+a_gx_g = 0_E \mbox{ for all } (x_1,\hdots,x_g) \in B(\bar{K})\}.\]
Since $\End E_{\bar{K}} \simeq \mathbb{Z}$, the free abelian group $\Lambda$ has rank $k$. Since $B$ is irreducible, we have $(\mathbb{Q} \cdot \Lambda) \cap \mathbb{Z}^g = \Lambda$. Choosing a basis of $\Lambda$ yields the rows of a $k \times g$-matrix $M_B$ such that $B$ is an irreducible component of the kernel of the homomorphism from $E^g$ to $E^k$ induced by $M_B$. Since $(\mathbb{Q} \cdot \Lambda) \cap \mathbb{Z}^g = \Lambda$, the rows of $M_B$ can be completed to a basis of $\mathbb{Z}^g$. It follows that the kernel of the homomorphism induced by $M_B$ is isomorphic to $E^{g-k}$ and hence equal to $B$.

Let $D$ denote the ($k$-dimensional) volume of the parallelotope spanned in $\mathbb{R} \cdot \Lambda$ by the rows of $M_B$. Let $M_B^t$ denote the transpose of $M_B$, then $D^2$ is equal to the Gram determinant $\det(M_BM_B^t)$. The Cauchy-Binet formula then implies together with part (1) that
\[ D = \sqrt{\sum_{\Delta}{\Delta^2}} \leq \sqrt{\deg B},\]
where the sum runs over the maximal minors of $M_B$.

Let $\lambda_i$ denote the $i$-th successive minimum of $\Lambda$ in $\mathbb{R} \cdot \Lambda$ with respect to the distance function induced by the Euclidean distance on $\mathbb{R}^g$ ($i=1,\hdots,k$). We have $\lambda_i \geq 1$ for all $i$ and therefore Theorem V in Section VIII.4.3 of \cite{MR1434478} implies that $\lambda_i \leq 2^k\nu_k^{-1}D$ for all $i$, where $\nu_k$ denotes the volume of the unit ball in $\mathbb{R}^k$.

By the Corollary in Section VIII.5.2 of \cite{MR1434478}, we can then find a basis $\{v_1,\hdots,v_k\}$ of $\Lambda$ such that the Euclidean norm $\lVert v_i \rVert$ of $v_i$ satisfies
\[ \lVert v_i \rVert \leq k2^{k}\nu_k^{-1}D \quad (i=1,\hdots,k).\]
Thus, we can choose $M_B$ such that all its entries are at most $c(g)\sqrt{\deg B}$ in absolute value. The lemma follows.
\end{proof}

The next lemma will be used to control the behaviour of the degree of subvarieties of fibered powers of the Legendre family under homomorphisms.

\begin{lem}\label{lem:addpoly}
Let $K$ be a field and let $n \in \mathbb{Z}$. Then the following hold:
\begin{enumerate}
\item Let $p_i: \mathcal{E}^{(3)}_K \to \mathcal{E}_K$ denote the canonical projections ($i=1,2,3$) and let $\Gamma \subset \mathcal{E}^{(3)}_K$ be the graph of addition on $\mathcal{E}_K$ such that $(p_1+p_2)|_{\Gamma} = (p_3)|_{\Gamma}$. Then $\Gamma$ is an irreducible component of the intersection in $\mathbb{P}^1_K \times_K \left(\mathbb{P}^2_K\right)^3$ of $\mathcal{E}^{(3)}_K$ with the zero locus of a multihomogeneous polynomial of multidegree $(0,1,1,1)$ with integer coefficients.
\item Let $p_i: \mathcal{E}^{(2)}_K \to \mathcal{E}_K$ denote the canonical projections ($i=1,2$), let $[n]: \mathcal{E}_K \to \mathcal{E}_K$ denote the multiplication-by-$n$ morphism, and let $\Gamma_n \subset \mathcal{E}^{(2)}_K$ be the graph of $[n]$ such that $([n] \circ p_1)|_{\Gamma_n} = (p_2)|_{\Gamma_n}$. Then $\Gamma_n$ is an irreducible component of the intersection in $\mathbb{P}^1_K \times_K \left(\mathbb{P}^2_K\right)^2$ of $\mathcal{E}^{(2)}_K$ with the zero locus of a multihomogeneous polynomial of multidegree $(e,n^2,1)$ with rational coefficients, where $e \leq n^2$.
\end{enumerate}
\end{lem}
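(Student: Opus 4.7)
My plan has two essentially independent halves.

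For part (1), I would use the classical chord-and-tangent description of the group law on the Weierstrass model $\mathcal{E}_\lambda \subset \mathbb{P}^2_K$ with neutral element $O = [0:1:0]$: three points (counted with tangential multiplicities) sum to $O$ under the group law if and only if they are collinear. Setting the three points to $P_1, P_2, -P_3$ with $-P_3 = [X_3:-Y_3:Z_3]$ (the standard negation formula in Weierstrass form), this rephrases $P_3 = P_1 + P_2$ as the vanishing of
\[ D = \det\begin{pmatrix} X_1 & Y_1 & Z_1 \\ X_2 & Y_2 & Z_2 \\ X_3 & -Y_3 & Z_3 \end{pmatrix},\]
a multihomogeneous polynomial of multidegree $(0,1,1,1)$ with integer coefficients. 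Thus $\Gamma \subset \mathcal{E}^{(3)}_K \cap \{D = 0\}$. Since $\Gamma$ is irreducible of dimension $\dim \mathcal{E}^{(2)}_K = 3$ and the hypersurface $\{D = 0\}$ does not contain the $4$-dimensional $\mathcal{E}^{(3)}_K$ (any generic non-collinear triple on a single fiber witnesses this), the intersection has pure dimension $3$, so $\Gamma$ is an irreducible component.

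For part (2), I would use the classical $n$-th division polynomial $\psi_n$ of the universal Weierstrass model $y^2 = x^3 + Ax^2 + Bx + C$, together with $\phi_n = x\psi_n^2 - \psi_{n-1}\psi_{n+1}$, which satisfy $x([n]P) = \phi_n/\psi_n^2$. Substituting $A = -(1+\lambda)$, $B = \lambda$, $C = 0$ (the Legendre specialization) and reducing $y^2$ using the defining equation lands $\psi_n^2$ and $\phi_n$ in $\mathbb{Z}[x,\lambda]$. The key input is the weighted homogeneity of $\psi_n$ at weighted degree $n^2-1$ with weights $(|x|,|y|,|A|,|B|,|C|) = (2,3,2,4,6)$: after reduction, each monomial $x^i A^a B^b C^c$ of $\psi_n^2$ satisfies $i + a + 2b + 3c = n^2-1$; the specialization $C=0$ kills the $c>0$ monomials, and each surviving contribution is a constant multiple of $x^i(1+\lambda)^a\lambda^b$, of $\lambda$-degree at most $a+b \leq n^2 - 1 - i$. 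Hence $\deg_x\psi_n^2 \leq n^2-1$ and $\deg_\lambda \psi_n^2 \leq n^2 - 1$; the analogous analysis of weighted degree $2n^2$ for $\phi_n$ yields $\deg_x \phi_n \leq n^2$ and $\deg_\lambda \phi_n \leq n^2$. Bihomogenizing in the pairs $(X_1, Z_1)$ and $(U, V)$ (with $\lambda = U/V$) produces $\Psi_n$ of bidegree $(n^2-1, n^2-1)$ and $\Phi_n$ of bidegree $(n^2, n^2)$, and
\[ P := X_2 \cdot Z_1 V \cdot \Psi_n - Z_2 \cdot \Phi_n \]
is multihomogeneous of multidegree $(n^2, n^2, 1)$ in $((U,V), (X_1, Y_1, Z_1), (X_2, Y_2, Z_2))$ with integer coefficients, realizing $e = n^2$.

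To finish part (2), I would observe that on the dense open locus of $\mathcal{E}^{(2)}_K$ where $\Psi_n \neq 0$, the equation $P = 0$ is equivalent to $x(P_2) = \phi_n/\psi_n^2 = x([n]P_1)$, whose solution set there is $\Gamma_n \cup \Gamma_{-n}$, each irreducible of dimension $2$. Since $P$ is not identically zero on the $3$-dimensional $\mathcal{E}^{(2)}_K$ (the two $\mathcal{E}$-fiber variables are independent), the intersection has pure codimension $1$, so $\Gamma_n$ is an irreducible component. The main technical obstacle is the joint degree bookkeeping for $\psi_n^2$ and $\phi_n$ in the Legendre specialization; this reduces to the standard weighted homogeneity of the universal division polynomials, which is itself provable by induction on $n$ from the classical division polynomial recurrences.
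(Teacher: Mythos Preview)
Your proposal is correct and follows essentially the same route as the paper. For part (1) you write down the identical $3\times 3$ determinant. For part (2) both you and the paper use the multiplication-by-$n$ formula $x([n]P)=\phi_n/\psi_n^2$ (the paper calls these $A_n,B_n$ and cites a reference), bihomogenize, and take $Z_2\Phi_n-X_2 Z_1 V\Psi_n$ up to sign. The only genuine difference is how the $\lambda$-degree bound $e\le n^2$ is obtained: the paper invokes the involution $\lambda\mapsto \lambda^{-1}$ of the Legendre family (via the identity $\Lambda^{n^2}A_n(\Lambda^{-1}X,\Lambda^{-1})=A_n(X,\Lambda)$ from the cited reference), whereas you deduce it from the weighted homogeneity of the universal division polynomials under $(|x|,|y|,|A|,|B|,|C|)=(2,3,2,4,6)$ and the specialization $C=0$. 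Your argument is more self-contained; the paper's is shorter given the citation. One small edge case you should add: for $n=0$ the division polynomial $\psi_0$ vanishes identically, so your $P$ degenerates; the paper handles this separately with the polynomial $Z_2$.
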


\begin{proof}
We fix an algebraic closure $\bar{K}$ of $K$.

For (1), if $[X_i:Y_i:Z_i]$ are projective coordinates on the $i$-th $\mathbb{P}^2_K$-factor ($i=1,2,3$), then the polynomial is equal to
\[ \begin{vmatrix} X_1 & Y_1 & Z_1 \\ X_2 & Y_2 & Z_2 \\ X_3 & -Y_3 & Z_3 \end{vmatrix} = X_1Y_2Z_3-Y_1X_2Z_3+X_1Z_2Y_3-Z_1X_2Y_3+Y_1Z_2X_3-Z_1Y_2X_3.\]

For (2), let $[\Lambda:M]$ be projective coordinates on $\mathbb{P}^1_K$ and let $[X_i:Y_i:Z_i]$ be projective coordinates on the $i$-th $\mathbb{P}^2_K$-factor ($i=1,2$). If $n = 0$, the polynomial $Z_2$ has the desired property. Hence, we can assume without loss of generality that $n \neq 0$. Since changing the sign of $Y_2$ sends $\Gamma_n$ to $\Gamma_{-n}$, we can even assume that $n > 0$.

We know that there exist polynomials $A_n, B_n \in \mathbb{Q}[X,\Lambda]$ such that for $(\lambda,[x:y:1]) \in \mathcal{E}(\bar{K}) \subset \mathbb{A}^1(\bar{K}) \times \mathbb{P}^2(\bar{K})$, we have $B_n(x,\lambda) = 0$ if and only if $x$ is the abscissa of a non-zero torsion point of $\mathcal{E}_{\lambda}$ of order dividing $n$, and
\[ [n](\lambda,[x:y:1]) = (\lambda,[A_n(x,\lambda)/B_n(x,\lambda):\ast:1])\]
if $B_n(x,\lambda) \neq 0$. Furthermore, we have $\deg_X A_n = n^2$, $\deg_X B_n = n^2-1$, and $A_n$ is monic as a polynomial in $X$. For all of this, see \cite{MR3179679}.

It follows that $\Gamma_n$ is an irreducible component of the intersection in $\mathbb{P}^1_K \times_K \left(\mathbb{P}^2_K\right)^2$ of $\mathcal{E}^{(2)}_K$ with the zero locus of
\[ Z_2\tilde{A}_n(X_1,Z_1,\Lambda,M) - X_2\tilde{B}_n(X_1,Z_1,\Lambda,M),\]
where
\[ \tilde{A}_n(X_1,Z_1,\Lambda,M) = Z_1^{d}M^{e}A_n\left(\frac{X_1}{Z_1},\frac{\Lambda}{M}\right)\]
and
\[ \tilde{B}_n(X_1,Z_1,\Lambda,M) = Z_1^{d}M^{e}B_n\left(\frac{X_1}{Z_1},\frac{\Lambda}{M}\right)\]
for $d = \max\{\deg_X A_n, \deg_X B_n\} = n^2$ and $e = \max\{\deg_\Lambda A_n, \deg_\Lambda B_n\}$.

By Lemma 2.2 in \cite{MR3179679}, we have $\Lambda^{n^2}A_n(\Lambda^{-1}X,\Lambda^{-1}) = A_n(X,\Lambda)$ and $\Lambda^{n^2-1}B_n(\Lambda^{-1}X,\Lambda^{-1}) = B_n(X,\Lambda)$. This implies that $e \leq n^2$. The lemma follows.
\end{proof}

The next lemma gives the promised control over the degree of images and pre-images of subvarieties under homomorphisms between fibered powers of the Legendre family.

\begin{lem}\label{lem:degim}
Let $K$ be a field and let $g, k \in \mathbb{N}$. Let $\psi: \mathcal{E}^{(g)}_K \to \mathcal{E}^{(k)}_K$ be a homomorphism of abelian schemes, defined by a $k \times g$-matrix
\[ M = \begin{pmatrix} a_{1,1} & \cdots & a_{1,g} \\ \vdots & & \vdots \\ a_{k,1} & \cdots & a_{k,g}\end{pmatrix}\]
with integer coefficients.
 
Set
\[ \Pi(M) = \prod_{i=1}^{k}\prod_{j=1}^{g}{\max\{1,|a_{i,j}|\}}.\]
There exists an effective constant $C(k,g)$, depending only on $k$ and $g$, such that the following hold:
\begin{enumerate}
\item Let $\mathcal{V} \subset \mathcal{E}^{(g)}_K$ be a subvariety. Then
\[ \deg \psi(\mathcal{V}) \leq C(k,g)\Pi(M)^2(\deg \mathcal{V}).\]
\item Let $\mathcal{W} \subset \mathcal{E}^{(k)}_K$ be a subvariety. Then
\[ \deg \psi^{-1}(\mathcal{W}) \leq C(k,g)\Pi(M)^2(\deg \mathcal{W}).\]
\end{enumerate}
\end{lem}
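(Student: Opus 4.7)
The plan is to bound both $\deg \psi(\mathcal{V})$ and $\deg \psi^{-1}(\mathcal{W})$ through a single estimate on the degree of the graph
\[ \Gamma_\psi = \{(x,\psi(x)) : x \in \mathcal{E}^{(g)}_K\} \subset \mathcal{E}^{(g+k)}_K = \mathcal{E}^{(g)}_K \times_{Y(2)_K} \mathcal{E}^{(k)}_K, \]
considered inside its natural immersion in $\mathbb{P}^1_K \times_K (\mathbb{P}^2_K)^{g+k}$. If $p_g$ and $p_k$ denote the two projections, then $\psi(\mathcal{V}) = p_k(\Gamma_\psi \cap p_g^{-1}(\mathcal{V}))$ and $\psi^{-1}(\mathcal{W}) = p_g(\Gamma_\psi \cap p_k^{-1}(\mathcal{W}))$. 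A direct multidegree computation in $\mathbb{P}^1_K \times_K (\mathbb{P}^2_K)^{g+k}$ combined with Theorem \ref{thm:bezout} shows that $\deg p_g^{-1}(\mathcal{V}) \leq C'(k,g) \deg \mathcal{V}$ and $\deg p_k^{-1}(\mathcal{W}) \leq C'(k,g) \deg \mathcal{W}$, since extending the ambient by $k$ (resp.\ $g$) fresh $\mathbb{P}^2$-factors and intersecting with $\mathcal{E}^{(g+k)}_K$ only multiplies the Segre degree by a constant depending on $g,k$. The whole estimate therefore reduces to bounding $\deg \Gamma_\psi$.

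For the graph itself, I would factor $\psi = \varphi_2 \circ \varphi_1$ through $\mathcal{E}^{(kg)}_K$, where $\varphi_1: \mathcal{E}^{(g)}_K \to \mathcal{E}^{(kg)}_K$ sends $(x_j)_j$ to $([a_{i,j}]x_j)_{i,j}$ (a single multiplication-by-integer map in each new coordinate), and $\varphi_2: \mathcal{E}^{(kg)}_K \to \mathcal{E}^{(k)}_K$ sums over each row via iterated addition. By Lemma \ref{lem:addpoly}(1), each addition step is cut out as an irreducible component by a multihomogeneous polynomial of multidegree $(0,1,1,1)$ localized in only three $\mathbb{P}^2$-factors, so iterating Theorem \ref{thm:bezout} bounds $\deg \Gamma_{\varphi_2}$ by a constant depending on $k,g$ alone. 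By Lemma \ref{lem:addpoly}(2), each of the $kg$ multiplication conditions defining $\Gamma_{\varphi_1}$ is cut out by a polynomial whose multidegree is nonzero only in the $\mathbb{P}^1$-factor and two $\mathbb{P}^2$-factors and is bounded there by $\max\{1,a_{i,j}^2\}$; the Segre degree of such a hypersurface in $\mathbb{P}^1_K \times_K (\mathbb{P}^2_K)^{g+kg}$ is consequently bounded by $\max\{1,a_{i,j}^2\}$ times a combinatorial factor in $g,k$. Iterating Theorem \ref{thm:bezout} over the $kg$ hypersurfaces yields $\deg \Gamma_{\varphi_1} \leq C''(k,g) \Pi(M)^2$. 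Assembling $\Gamma_\psi$ by taking the fibered intersection of (lifts of) $\Gamma_{\varphi_1}$ and $\Gamma_{\varphi_2}$ inside $\mathbb{P}^1_K \times_K (\mathbb{P}^2_K)^{g+kg+k}$ (one more application of Theorem \ref{thm:bezout}) and then projecting to $\mathcal{E}^{(g+k)}_K$ via Lemma \ref{lem:degproj} produces the desired bound $\deg \Gamma_\psi \leq C'''(k,g)\Pi(M)^2$.

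Combining all pieces, Theorem \ref{thm:bezout} bounds the degree of $\Gamma_\psi \cap p_g^{-1}(\mathcal{V})$ by $C(k,g)\Pi(M)^2 \deg \mathcal{V}$, and Lemma \ref{lem:degproj} applied to the projection $p_k$ yields (1); assertion (2) follows by the symmetric argument with $p_g$ and $p_k$ exchanged. The main obstacle lies in the second step: one must obtain a bound whose dependence on $M$ is the product $\Pi(M)^2$ rather than a much cruder quantity such as $(\max_{i,j}|a_{i,j}|)^{2kg}$ that a naive application of Theorem \ref{thm:bezoutii} with a uniform multidegree bound would produce. This requires exploiting the fact that each of the $kg$ hypersurfaces from Lemma \ref{lem:addpoly}(2) has multidegree concentrated in only three factors of $\mathbb{P}^1_K \times_K (\mathbb{P}^2_K)^{g+kg}$, so that its Segre degree scales precisely as $\max\{1,a_{i,j}^2\}$ up to combinatorial factors absorbed into $C(k,g)$, and the multiplicative structure of Theorem \ref{thm:bezout} then aggregates the individual $a_{i,j}$-contributions into $\Pi(M)^2$.
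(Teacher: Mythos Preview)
Your proposal is correct and follows essentially the same approach as the paper: bound $\deg \Gamma_\psi$ by decomposing $\psi$ into coordinate multiplications (controlled by Lemma~\ref{lem:addpoly}(2), each contributing $\max\{1,a_{i,j}^2\}$) and iterated additions (controlled by Lemma~\ref{lem:addpoly}(1), contributing only constants in $g,k$), aggregate via Theorem~\ref{thm:bezout}, then project with Lemma~\ref{lem:degproj}. The paper carries this out in one large ambient space $\mathcal{E}_K^{(g+k(2g-1))}$ holding all auxiliary partial sums simultaneously, whereas you factor $\psi=\varphi_2\circ\varphi_1$ and treat the two graphs separately before combining them in $\mathbb{P}^1_K\times_K(\mathbb{P}^2_K)^{g+kg+k}$; this is an organizational difference only, and your observation about why the product $\Pi(M)^2$ (rather than a cruder power of $\max|a_{i,j}|$) emerges is exactly the point.
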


\begin{proof}
Let $\Gamma_{\psi} \subset \mathcal{E}^{(g+k)}_K \subset \mathbb{P}^1_K \times_K \left(\mathbb{P}^2_K\right)^{g+k}$ denote the graph of $\psi$. We first estimate $\deg \Gamma_{\psi}$. Constants $c_1, c_2, \hdots$ will be effective and depend only on $k$ and $g$.

For $m \in \mathbb{N}$ and an $m$-tuple $I$ of distinct elements of $\{1,\hdots,g+k(2g-1)\}$, let $p_I: \mathcal{E}_K^{(g+k(2g-1))} \to \mathcal{E}_K^{(m)}$ denote the corresponding projection, where the order of the factors is given by the order of $I$. As in Lemma \ref{lem:addpoly}, we write $\Gamma$ for the graph of addition on $\mathcal{E}_K$ and $\Gamma_n$ for the graph of multiplication by $n$ on $\mathcal{E}_K$ ($n \in \mathbb{Z}$).

If $g \geq 3$, set
\begin{multline}\label{eq:biguglyequation}
\tilde{\Gamma}_{\psi} = \bigcap_{i=1}^{k}\bigcap_{j=1}^{g}{p_{(j,k+gi+j)}^{-1}\left(\Gamma_{a_{i,j}}\right)} \cap \bigcap_{i=1}^{k}{p_{(k+gi+1,k+gi+2,k+kg+(g-2)i+3)}^{-1}\left(\Gamma\right)}\\
\cap  \bigcap_{i=1}^{k}\bigcap_{j=3}^{g-1}{p_{(k+gi+j,k+kg+(g-2)i+j,k+kg+(g-2)i+j+1)}^{-1}\left(\Gamma\right)} \cap \bigcap_{i=1}^{k}{p_{(k+g(i+1),k+kg+(g-2)i+g,g+i)}^{-1}\left(\Gamma\right)},
\end{multline}
then we have
\begin{equation}\label{eq:smallniceequation}
\Gamma_{\psi} = p_{(1,\hdots,g+k)}(\tilde{\Gamma}_{\psi}).
\end{equation}
If $g \in \{1,2\}$, then the same equality holds, but the definition of $\tilde{\Gamma}_{\psi}$ has to be slightly modified.

The degree of the zero locus in $\mathbb{P}^1_K \times_K \left(\mathbb{P}^2_K\right)^{g+k(2g-1)}$ of a multihomogeneous polynomial of multidegree $\left(d_0,d_1,\hdots,d_{g+k(2g-1)}\right)$ is bounded from above by
\[ c_1\left(d_0+d_1+\cdots+d_{g+k(2g-1)}\right).\]

It then follows from Theorem \ref{thm:bezout} and Lemma \ref{lem:addpoly} that the degree of the pre-image of $\Gamma_{a_{i,j}}$ under some projection is bounded by $c_2\max\{1,a_{i,j}^2\}$ whereas the degree of the pre-image of $\Gamma$ under some projection is bounded by $c_3$. Together with \eqref{eq:biguglyequation} and Theorem \ref{thm:bezout}, this implies that $\deg \tilde{\Gamma}_{\psi} \leq c_4\Pi(M)^2$. We deduce from \eqref{eq:smallniceequation} together with Lemma \ref{lem:degproj} that also $\deg \Gamma_{\psi} \leq c_4\Pi(M)^2$.

We can assume without loss of generality that $\mathcal{V}$ and $\mathcal{W}$ are irreducible. We consider $\mathcal{V} \times_K \left(\mathbb{P}^2_K\right)^{k} \subset \mathbb{P}^1_K \times_K \left(\mathbb{P}^2_K\right)^{g+k}$ and $\mathcal{W} \times_K \left(\mathbb{P}^2_K\right)^{g} \subset \mathbb{P}^1_K \times_K \left(\mathbb{P}^2_K\right)^{g+k}$ (with the appropriate ordering of the factors). These varieties have degrees
\[ \binom{\dim \mathcal{V}+2k}{\dim \mathcal{V},2,2,\hdots,2}(\deg \mathcal{V}) \leq c_5 \deg \mathcal{V}\]
and
\[ \binom{\dim \mathcal{W}+2g}{\dim \mathcal{W},2,2,\hdots,2}(\deg \mathcal{W}) \leq c_6 \deg \mathcal{W}\]
respectively. To obtain $\psi(\mathcal{V})$ and $\psi^{-1}(\mathcal{W})$ respectively, we intersect those varieties with $\Gamma_{\psi}$ and project to certain factors of the product. The lemma then follows from Theorem \ref{thm:bezout} and Lemma \ref{lem:degproj}.
\end{proof}

In the next lemma, we apply Lemma \ref{lem:degim} to the special case of the addition morphism.

\begin{lem}\label{lem:degadd}
Let $K$ be a field and let $g \in \mathbb{N}$. There exists an effective constant $C(g)$, depending only on $g$, such that the following holds: Let $\mathcal{V}, \mathcal{W} \subset \mathcal{E}^{(g)}_K$ be two subvarieties. Then
\[ \deg(\mathcal{V}+\mathcal{W}) \leq C(g)(\deg \mathcal{V})(\deg \mathcal{W}).\]
\end{lem}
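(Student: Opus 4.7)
The plan is to realize $\mathcal{V}+\mathcal{W}$ as the image under the addition morphism $s: \mathcal{E}^{(2g)}_K \to \mathcal{E}^{(g)}_K$, viewed as a homomorphism of abelian schemes over $Y(2)_K$, of the fibered product $\mathcal{V} \times_{Y(2)_K} \mathcal{W} \subset \mathcal{E}^{(2g)}_K = \mathcal{E}^{(g)}_K \times_{Y(2)_K} \mathcal{E}^{(g)}_K$. This then reduces the bound to two applications of Lemma \ref{lem:degim} combined with B\'ezout (Theorem \ref{thm:bezout}). The key observation that makes everything clean is that all the integer matrices that appear — the two projections $p_1,p_2: \mathcal{E}^{(2g)}_K \to \mathcal{E}^{(g)}_K$ and the addition map $s$, which is given by the $g \times 2g$ matrix $(I_g\mid I_g)$ — have entries in $\{0,1\}$, so $\Pi(M)=1$ in each case and only constants depending on $g$ enter the final bound.

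First I would bound the degree of the fibered product $\mathcal{V}\times_{Y(2)_K}\mathcal{W}$ inside $\mathcal{E}^{(2g)}_K$. As a subvariety of $\mathcal{E}^{(2g)}_K$, this fibered product equals the intersection $p_1^{-1}(\mathcal{V}) \cap p_2^{-1}(\mathcal{W})$. Applying Lemma \ref{lem:degim}(2) to the two projections (with $\Pi=1$) gives
\[ \deg p_1^{-1}(\mathcal{V}) \leq C(g,2g)\deg \mathcal{V}, \qquad \deg p_2^{-1}(\mathcal{W}) \leq C(g,2g)\deg \mathcal{W}, \]
and then B\'ezout, applied inside the ambient multiprojective space $\mathbb{P}^1_K \times_K (\mathbb{P}^2_K)^{2g}$, yields
\[ \deg\bigl(\mathcal{V}\times_{Y(2)_K}\mathcal{W}\bigr) \leq C(g,2g)^{2}(\deg \mathcal{V})(\deg \mathcal{W}). \]
Second, I would apply Lemma \ref{lem:degim}(1) to the addition morphism $s$ (again with $\Pi=1$) to deduce
\[ \deg(\mathcal{V}+\mathcal{W}) = \deg s\bigl(\mathcal{V}\times_{Y(2)_K}\mathcal{W}\bigr) \leq C(2g,g)\deg\bigl(\mathcal{V}\times_{Y(2)_K}\mathcal{W}\bigr), \]
so setting $C(g) = C(2g,g)\,C(g,2g)^{2}$ gives the claimed inequality.

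There is no real obstacle here: once one recognizes $\mathcal{V}+\mathcal{W}$ as the $s$-image of the fibered product and checks that $p_1$, $p_2$, and $s$ all fit into the integer-matrix framework of Lemma \ref{lem:degim}, the result is a short bookkeeping combination of Lemma \ref{lem:degim} and Theorem \ref{thm:bezout}. The only minor point to verify is that the fibered product of $\mathcal{V}$ and $\mathcal{W}$ over $Y(2)_K$ does coincide scheme-theoretically with the intersection $p_1^{-1}(\mathcal{V})\cap p_2^{-1}(\mathcal{W})$ inside $\mathcal{E}^{(2g)}_K$, which is immediate from the universal property of the fibered product.
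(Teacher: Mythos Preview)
Your proposal is correct and follows essentially the same approach as the paper: express $\mathcal{V}+\mathcal{W}$ as the image of the fibered product $\mathcal{V}\times_{Y(2)_K}\mathcal{W}$ under the addition morphism, bound the degree of that fibered product, and then apply Lemma~\ref{lem:degim}(1). The only difference is in how the degree of $\mathcal{V}\times_{Y(2)_K}\mathcal{W}$ is bounded: the paper embeds it into $\mathcal{V}\times_K\mathcal{W}\subset(\mathbb{P}^1_K\times_K(\mathbb{P}^2_K)^g)^2$, intersects with the pullback of the diagonal in $(\mathbb{P}^1_K)^2$, applies B\'ezout there, and then uses Lemma~\ref{lem:degproj} to pass back to $\mathbb{P}^1_K\times_K(\mathbb{P}^2_K)^{2g}$; you instead write the fibered product as $p_1^{-1}(\mathcal{V})\cap p_2^{-1}(\mathcal{W})$ and apply Lemma~\ref{lem:degim}(2) twice before B\'ezout. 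Your route is slightly more uniform (three calls to Lemma~\ref{lem:degim} and nothing else), while the paper's avoids invoking the graph machinery of Lemma~\ref{lem:degim} for the easy projection step. One cosmetic slip: since $p_1,p_2,s$ are all maps $\mathcal{E}^{(2g)}_K\to\mathcal{E}^{(g)}_K$, the constant from Lemma~\ref{lem:degim} is $C(g,2g)$ in all three cases, not $C(2g,g)$ for $s$.
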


\begin{proof}
Let $\psi: \mathcal{E}^{(2g)}_K \to \mathcal{E}^{(g)}_K$ denote the addition morphism of $\mathcal{E}^{(g)}_K$. We have $\mathcal{V}+\mathcal{W} = \psi(\mathcal{V} \times_{Y(2)_K} \mathcal{W})$, where we consider $\mathcal{V} \times_{Y(2)_K} \mathcal{W} \subset \mathcal{E}^{(2g)}_K$ with its reduced subscheme structure. We can assume without loss of generality that $\mathcal{V}$ and $\mathcal{W}$ are irreducible. Now
\[ \mathcal{V} \times_{Y(2)_K} \mathcal{W} \hookrightarrow \mathcal{V} \times_K \mathcal{W} \subset \mathcal{E}^{(g)}_K \times_K \mathcal{E}^{(g)}_K \subset \left(\mathbb{P}^{1}_K \times_K (\mathbb{P}^2_K)^g\right)^2.\]

The degree of $\mathcal{V} \times_{Y(2)_K} \mathcal{W}$ as a subvariety of $\mathcal{E}^{(2g)}_K \subset \mathbb{P}^{1}_K \times_K (\mathbb{P}^2_K)^{2g}$ is bounded from above by its degree as a subvariety of $\mathcal{V} \times_K \mathcal{W} \subset \left(\mathbb{P}^{1}_K \times_K (\mathbb{P}^2_K)^g\right)^2$ by Lemma \ref{lem:degproj}.

Let $\Delta$ denote the pre-image in $\left(\mathbb{P}^{1}_K \times_K (\mathbb{P}^2_K)^g\right)^2$ of the diagonal in $\left(\mathbb{P}^1_K\right)^2$, then $\mathcal{V} \times_{Y(2)_K} \mathcal{W}$ is equal to the intersection $(\mathcal{V} \times_{K} \mathcal{W}) \cap \Delta$ inside $\left(\mathbb{P}^{1}_K \times_K (\mathbb{P}^2_K)^g\right)^2$. It follows from Theorem \ref{thm:bezout} that the degree of $\mathcal{V} \times_{Y(2)_K} \mathcal{W}$ with respect to its immersion into $\left(\mathbb{P}^{1}_K \times_K (\mathbb{P}^2_K)^g\right)^2$ is bounded from above by
\[\deg(\mathcal{V} \times_K \mathcal{W}) \cdot \deg(\Delta) = \binom{\dim \mathcal{V}+\dim\mathcal{W}}{\dim \mathcal{V}}(\deg \mathcal{V})(\deg \mathcal{W})(\deg \Delta).\]

The lemma now follows from $\mathcal{V}+\mathcal{W} = \psi(\mathcal{V} \times_{Y(2)_K} \mathcal{W})$ and Lemma \ref{lem:degim}.
\end{proof}

We can now prove Theorem \ref{thm:effectiveisogenymmposdim}.

\begin{proof}[Proof of Theorem \ref{thm:effectiveisogenymmposdim}]
Throughout the proof, we use constants $c_1, c_2, \hdots$ that are effective and depend only on $g$. We induct on $(g,\dim \mathcal{V})$ with respect to the lexicographic order. We can assume without loss of generality that $\mathcal{V}$ is irreducible (although not necessarily geometrically irreducible). We proceed by distinguishing various cases.

\emph{Case 1: $\dim \pi(\mathcal{V}) = 0$.}

Let $p \in \mathcal{V}(\bar{K})$ be torsion on $\mathcal{E}^g_{\pi(p)}$ such that $\mathcal{E}_{\pi(p)}$ is isogenous to $(E_0)_{\bar{K}}$. Since $\mathcal{V}$ is defined over $K$ and irreducible, we have
\[ [K(\pi(p)):K] = |\pi(\mathcal{V}(\bar{K}))| \leq \deg \mathcal{V}.\]
We now deduce from Th\'{e}or\`{e}me 1.4 in \cite{MR3225452} that conclusion (2) of the theorem is satisfied for a suitable choice of $\gamma(g)$.

\emph{Case 2: $\pi(\mathcal{V}) = Y(2)_K$.}

By Theorem \ref{thm:faltings}, the Zariski closure of $\mathcal{V}$ in $\mathbb{P}^1_K \times_K (\mathbb{P}^2_K)^g$ is defined by multihomogeneous polynomials of multidegree at most $(\deg \mathcal{V},\hdots,\deg \mathcal{V})$. Hence, $\mathcal{V}$ is defined in $\mathcal{E}^{(g)}_K \subset \mathbb{P}^1_K \times_K (\mathbb{P}^2_K)^g$ by multihomogeneous polynomials of degree at most $\deg \mathcal{V}$ in each set of projective coordinates. The same then holds for $\mathcal{V}_\xi$ in $\mathcal{E}^g_\xi \subset \left(\mathbb{P}^2_{\overline{K(Y(2))}}\right)^g$. It follows from Theorem \ref{thm:bezoutii} that
\begin{equation}\label{eq:degboundgenfiber}
\deg \mathcal{V}_\xi \leq c_1(\deg \mathcal{V})^{g-\dim \mathcal{V}_\xi} = c_1(\deg \mathcal{V})^{g+1-\dim \mathcal{V}}.
\end{equation}

\emph{Case 2.1: The stabilizer $\Stab(\mathcal{V}_\xi,\mathcal{E}^g_{\xi})$ of $\mathcal{V}_\xi$ is positive-dimensional.}

Let $k < g$ be the codimension of $\Stab(\mathcal{V}_\xi,\mathcal{E}^g_{\xi})$ and let $A$ be the identity component of $\Stab(\mathcal{V}_\xi,\mathcal{E}^g_{\xi})$. We can assume without loss of generality that $k > 0$. Starting with an irreducible component of $\mathcal{V}_\xi - x_0$ that contains $A$ (for an arbitrary $x_0 \in \mathcal{V}_\xi(\overline{K(Y(2))})$), we successively intersect with $\mathcal{V}_\xi - x$ for some $x \in \mathcal{V}_\xi(\overline{K(Y(2))})$ and take an irreducible component of the intersection that contains $A$. After doing this at most $\dim \mathcal{V}_\xi - \dim A$ times and choosing $x$ sufficiently general in each step, we obtain $A$ itself. It follows from Theorem \ref{thm:bezout} and \eqref{eq:degboundgenfiber} that
\begin{equation}\label{eq:bounddegstab}
\deg A \leq (\deg \mathcal{V}_\xi)^{\dim \mathcal{V}_\xi - \dim A+1} \leq c_2(\deg \mathcal{V})^{(g+1-\dim \mathcal{V})\dim \mathcal{V}_\xi}.
\end{equation}

By Lemma \ref{lem:degabsub}(2), the identity component $A$ is equal to the kernel of a homomorphism from $\mathcal{E}^g_{\xi}$ to $\mathcal{E}^k_{\xi}$ induced by a $k \times g$ matrix $M_A$ with integer entries of absolute value at most $c_3\sqrt{\deg A}$. Now $M_A$ induces a homomorphism $\psi_A: \mathcal{E}^{(g)}_K \to \mathcal{E}^{(k)}_K$. By abuse of notation, we also write $\psi_A$ for the induced homomorphism from $\mathcal{E}^g_{\xi}$ to $\mathcal{E}^k_{\xi}$. We write $\pi'$ for the structural morphism $\mathcal{E}^{(k)} \to Y(2)$.

Set $\mathcal{V}' = \psi_A(\mathcal{V})$. We have $\mathcal{V} = \psi_A^{-1}(\mathcal{V}')$ by construction. By Lemma \ref{lem:degim}, the above bound for the absolute value of the entries of $M_A$, and \eqref{eq:bounddegstab}, we have
\begin{equation}\label{eq:bounddegvprime}
\deg \mathcal{V}' \leq c_4(\deg A)^{kg}(\deg \mathcal{V}) \leq c_5(\deg \mathcal{V})^{kg(g+1-\dim \mathcal{V})\dim \mathcal{V}_\xi+1}.
\end{equation}

Let now $p \in \mathcal{V}(\bar{K})$ be torsion on $\mathcal{E}^g_{\pi(p)}$ such that $\mathcal{E}_{\pi(p)}$ is isogenous to $(E_0)_{\bar{K}}$. Then $p' := \psi_A(p)\in \mathcal{V}'(\bar{K})$ is torsion on $\mathcal{E}^k_{\pi'(p')}$ and $\mathcal{E}_{\pi'(p')}$ is isogenous to $(E_0)_{\bar{K}}$. We apply induction on $(g,\dim \mathcal{V})$ and use that the theorem holds for $p'$, $\mathcal{V}'$, and $\mathcal{E}^{(k)}_K$.

If conclusion (2) of the theorem holds for $p'$, then we are done thanks to $\pi'(p') = \pi(p)$ and \eqref{eq:bounddegvprime}.

If conclusion (1) of the theorem holds for $p'$, we find that there exists a torsion point $q' \in \mathcal{E}^k_{\xi}$ and an abelian subvariety $B'$ of $\mathcal{E}^k_{\xi}$ such that $p' \in \overline{q'+B'}(\bar{K})$ and $\overline{q'+B'} \subset \mathcal{V}'$, where $\overline{q'+B'}$ denotes the Zariski closure in $\mathcal{E}^{(k)}_K$ of the image of $q'+B'$ under the natural morphism $\mathcal{E}^k_{\xi} \to \mathcal{E}^{(k)}_K$. Furthermore, the order of $q'$ and the degree of $B'$ are bounded as in the theorem in terms of $k$ and $\deg \mathcal{V}'$.

Set $B = \psi_A^{-1}(B')$, then $B$ is an abelian subvariety of $\mathcal{E}^g_{\xi}$. There exists a torsion point $q \in \mathcal{E}^g_{\xi}$ of order dividing the order of $q'$ such that $q+B = \psi_A^{-1}(q'+B')$. Let $\overline{q+B}$ denote the Zariski closure in $\mathcal{E}^{(g)}_K$ of the image of $q+B$ under the natural morphism $\mathcal{E}^g_{\xi} \to \mathcal{E}^{(g)}_K$. We have $\overline{q+B} \subset \psi_A^{-1}(\overline{q'+B'})$, but actually equality holds since both varieties are irreducible of the same dimension. Since $p' = \psi_A(p)$ and $\mathcal{V} = \psi_A^{-1}(\mathcal{V}')$, it follows that $p \in \overline{q+B}(\bar{K})$ and $\overline{q+B} \subset \mathcal{V}$.

We are now again done thanks to \eqref{eq:bounddegvprime}, provided that we can bound the degree of $B$ in the required way. Let $k'$ denote the codimension of $B'$. We have $k' > 0$ since $k > 0$. By Lemma \ref{lem:degabsub}(2), $B'$ is the kernel of a homomorphism from $\mathcal{E}^k_{\xi}$ to $\mathcal{E}^{k'}_{\xi}$ induced by a $k' \times k$-matrix with integer entries of absolute value at most $c_6\sqrt{\deg B'}$. Together with the above bound for the absolute value of the entries of $M_A$, this implies that $B = \psi_A^{-1}(B')$ is the kernel of a homomorphism from $\mathcal{E}^g_{\xi}$ to $\mathcal{E}^{k'}_{\xi}$ induced by a $k' \times g$-matrix with integer entries of absolute value at most $c_7\sqrt{\deg A \deg B'}$. We then deduce from Lemma \ref{lem:degabsub}(1) and Hadamard's determinant inequality that
\[\deg B \leq c_8((\deg A)(\deg B'))^{k'}.\]
Together with \eqref{eq:bounddegstab}, \eqref{eq:bounddegvprime}, and the bound for $\deg B'$ furnished by the inductive hypothesis, this completes the proof of the theorem in Case 2.1.

\emph{Case 2.2: The stabilizer of $\mathcal{V}_\xi$ is finite.}

Let $p \in \mathcal{V}(\bar{K})$ be torsion on $\mathcal{E}^g_{\pi(p)}$ such that $\mathcal{E}_{\pi(p)}$ is isogenous to $(E_0)_{\bar{K}}$.

\emph{Case 2.2.1: The point $p$ lies in a translate of a positive-dimensional abelian subvariety of $\mathcal{E}^g_{\pi(p)}$ that is contained in $\mathcal{V}$.}

Let $Z$ be an irreducible component of $\mathcal{V}_{\xi}$. As every abelian subvariety of $\mathcal{E}^g_{\xi}$ is defined over $K(Y(2))$, the stabilizer of $Z$ is finite as well. Hence, there exist points $x_1, \hdots, x_{\dim Z+1} \in Z(\overline{K(Y(2))})$ such that 
\[ (Z-x_1) \cap \hdots \cap (Z-x_{\dim Z +1})\]
is a finite set. By Theorem \ref{thm:bezout} and \eqref{eq:degboundgenfiber}, the number of $t$ such that $t+Z = Z$ is then bounded by $(\deg Z)^{\dim Z+1} \leq c_9(\deg \mathcal{V})^{(g+1-\dim \mathcal{V})\dim \mathcal{V}}$.

Let $\mathcal{K}$ denote the set of pairs $(Z_1,Z_2)$ of irreducible components of $\mathcal{V}_{\xi}$ such that there exists $t \in (\mathcal{E}^g_\xi)_{\tors}$ with $Z_1 = t + Z_2$. For example, $\mathcal{K}$ contains all pairs $(Z,Z)$, where $Z$ is an irreducible component of $\mathcal{V}_{\xi}$. The cardinality of $\mathcal{K}$ is at most equal to $(\deg \mathcal{V}_\xi)^2$, which is at most equal to $c_{10}(\deg \mathcal{V})^{2(g+1-\dim \mathcal{V})}$ by \eqref{eq:degboundgenfiber}. If $(Z_1,Z_2) \in \mathcal{K}$, let $t(Z_1,Z_2)$ be an arbitrary torsion point such that $Z_1 = t(Z_1,Z_2) + Z_2$. The set of torsion points $t$ such that $Z_1 = t + Z_2$ for some irreducible components $Z_1, Z_2$ of $\mathcal{V}_{\xi}$ is then equal to
\[ \{ t(Z_1,Z_2) + t'; (Z_1,Z_2) \in \mathcal{K}, t' \in \Stab(Z_2,\mathcal{E}^g_{\xi})\}.\]

The cardinality of this set is at most equal to $c_{11}(\deg \mathcal{V})^{(\dim \mathcal{V}+2)(g+1-\dim \mathcal{V})}$, so there exists a natural number $N \leq c_{12}(\deg \mathcal{V})^{(\dim \mathcal{V}+2)(g+1-\dim \mathcal{V})}$ with the property that no torsion point $t$ of order $N$ satisfies $Z_1 = t + Z_2$ for some irreducible components $Z_1, Z_2$ of $\mathcal{V}_{\xi}$. Let $\mathcal{T} \subset \mathcal{E}^{(g)}_K$ be the Zariski closure of the image of the set of all torsion points of order $N$ of $\mathcal{E}^g_\xi$ under the natural morphism $\mathcal{E}^g_\xi \to \mathcal{E}^{(g)}_K$. Then $\mathcal{T}$ contains the torsion points of order $N$ of all fibers of $\mathcal{E}^{(g)}_K$ (cf. the proof of Lemma \ref{lem:not-zariskidense-too}).

Since every positive-dimensional abelian variety contains a torsion point of order $N$, we find that $p \in (\mathcal{V} \cap (\mathcal{T}+\mathcal{V}))(\bar{K})$. At the same time, looking at the geometric generic fiber, we see that $\mathcal{V} \cap (\mathcal{T}+\mathcal{V}) \subsetneq \mathcal{V}$ by our choice of $N$. If $[N]: \mathcal{E}^{(g)}_K \to \mathcal{E}^{(g)}_K$ denotes the multiplication-by-$N$ morphism and $\epsilon: Y(2)_K \to \mathcal{E}^{(g)}_K$ denotes the zero section, then $\mathcal{T}$ is a union of irreducible components of $[N]^{-1}(\epsilon(Y(2)_K))$. Hence Lemma \ref{lem:degim}(2) implies that
\[ \deg \mathcal{T} \leq \deg [N]^{-1}(\epsilon(Y(2)_K)) \leq c_{13}N^{2g}.\]

By Lemma \ref{lem:degadd}, we can then estimate
\[ \deg(\mathcal{T}+\mathcal{V}) \leq c_{14}N^{2g}(\deg \mathcal{V}).\]
It follows from this together with Theorem \ref{thm:bezout} that
\[ \deg(\mathcal{V} \cap (\mathcal{T}+\mathcal{V})) \leq (\deg \mathcal{V})\deg(\mathcal{T}+\mathcal{V}) \leq c_{14}N^{2g}(\deg \mathcal{V})^2.\]
We are now again done by the above bound on $N$ and induction on $(g,\dim \mathcal{V})$.

\emph{Case 2.2.2: The point $p$ does not lie in any translate of a positive-dimensional abelian subvariety of $\mathcal{E}^g_{\pi(p)}$ that is contained in $\mathcal{V}$.}

In this case, we can apply Proposition \ref{prop:hindry} to bound the order $N_p$ of $p$. By Lemma \ref{lem:uniformhomothety}, there exists for any $a \in \hat{\mathbb{Z}}^{\ast}$ some $\sigma_a \in \Gal(\bar{K}/K)$ that fixes $\pi(p)$ and acts on the torsion of $\mathcal{E}_{\pi(p)}$ as multiplication by $a^{2c(E_0/K)}$, where $c(E_0/K)$ is the Serre constant as defined in Lemma \ref{lem:uniformhomothety}. Proposition \ref{prop:hindry}, applied to $p$ inside $\mathcal{V}_{\pi(p)} \subset \mathcal{E}^g_{\pi(p)} \hookrightarrow \mathbb{P}^{3^g-1}_{\bar{\mathbb{Q}}}$ over the field of definition $K(\pi(p))$, then implies together with the bound for $c(E_0/K)$ in Theorem \ref{thm:lombardo} that $N_p$ can be bounded by
\begin{equation}\label{eq:boundfornp}
\max\left\{\exp\left(\max\{2,h(E_0),[K:\mathbb{Q}]\}^{c_{15}}\right),\left(\deg \mathcal{V}_{\pi(p)}\right)^{c_{15}}\right\},
\end{equation}
where the dependency on $h(E_0)$ can be dropped if $E_0$ has CM.

We get the bound
\begin{equation}\label{eq:degboundspecfiber}
\deg \mathcal{V}_{\pi(p)} \leq c_{16}(\deg \mathcal{V})^{g+1-\dim \mathcal{V}}
\end{equation}
for $\deg \mathcal{V}_{\pi(p)}$ in the same way as the bound \eqref{eq:degboundgenfiber} for $\deg \mathcal{V}_\xi$.

\emph{Case 2.2.2.1: There exists $q \in \left(\mathcal{E}^g_{\xi}\right)_{\tors}$ such that $p \in \overline{q} \subset \mathcal{V}$, where $\overline{q}$ denotes the Zariski closure of the image of $q$ in $\mathcal{E}^{(g)}_K$ under the natural morphism $\mathcal{E}^g_{\xi} \to \mathcal{E}^{(g)}_K$.}

As $p$ does not lie in any translate of a positive-dimensional abelian subvariety of $\mathcal{E}^g_{\pi(p)}$ that is contained in $\mathcal{V}$, we know that $q$ and all of its Galois conjugates over $\bar{K}(Y(2))$ are maximal torsion cosets in $ \mathcal{V}_{\xi}$. The order of $q$ is $N_p$. It follows from Corollary 2 on p. 69 of \cite{MR890960} that there are at least $cN_p$ Galois conjugates of $q$ over $\bar{K}(Y(2))$ for an effective absolute constant $c > 0$ (note that $\mathcal{E}_{\xi}$ is isomorphic to the base change of an elliptic curve defined over $\bar{K}(j(\mathcal{E}_{\xi}))$ and that the isomorphism is defined over a field extension of $\bar{K}(Y(2))$ of degree at most 12 by Th\'eor\`eme 1.2 in \cite{R17}). But by Th\'eor\`eme 1.13 in \cite{DavidPhilippon2007}, the number of maximal torsion cosets contained in $\mathcal{V}_\xi$ is bounded from above by $\max\{2,\deg \mathcal{V}_\xi\}^{c_{17}}$. Thanks to \eqref{eq:degboundgenfiber}, the order of $q$ is then bounded as in conclusion (1) of the theorem and we are done.

\emph{Case 2.2.2.2: There exists no $q \in \left(\mathcal{E}^g_{\xi}\right)_{\tors}$ as in Case 2.2.2.1.}

The singleton $\{p\}$ is then an irreducible component of $[N_p]^{-1}(\epsilon(Y(2)_K)) \cap \mathcal{V}$, where $[N_p]: \mathcal{E}^{(g)}_K \to \mathcal{E}^{(g)}_K$ denotes the multiplication-by-$N_p$ morphism. It follows that
\[ [K(p):K] \leq \deg([N_p]^{-1}(\epsilon(Y(2)_K)) \cap  \mathcal{V}).\]
Together with Theorem \ref{thm:bezout} and Lemma \ref{lem:degim}(2), this implies that
\[ [K(\pi(p)):K] \leq [K(p):K] \leq c_{18}N_p^{2g}(\deg  \mathcal{V}).\]
The existence of an isogeny between $(E_0)_{\bar{K}}$ and $\mathcal{E}_{\pi(p)}$ of degree bounded as in conclusion (2) of the theorem now follows from this inequality together with \eqref{eq:boundfornp}, \eqref{eq:degboundspecfiber}, and Th\'{e}or\`{e}me 1.4 in \cite{MR3225452}.
\end{proof}

\section{Uniform bounds on the number of maximal torsion cosets}\label{sec:galmar}

In this section, we prove Theorem \ref{thm:countingacrossisogenyclass}. Its proof hinges on the work \cite{GT17} of Galateau-Mart\'inez.

\begin{proof}[Proof of Theorem \ref{thm:countingacrossisogenyclass}]
Theorem \ref{thm:countingacrossisogenyclass} will follow from Theorem 4.5 in \cite{GT17} once we have shown that the constant $c$ used there can be bounded in terms of only $A_0$ and $K$. Here, $c \in \mathbb{N}$ satisfies: There is some number field $L \subset \bar{K}$ over which $A$ and its embedding into $\mathbb{P}^N_{\bar{K}}$ can be defined (up to $\bar{K}$-isomorphism) such that for all $a, N \in \mathbb{N}$ with $\gcd(a,N) = 1$, there exists $\sigma \in \Gal(\bar{K}/L)$ such that $\sigma$ acts on the $N$-torsion of $A$ as multiplication by $a^c$.

If we forget for the moment the projective embedding, then the existence of such a constant $c = c(A_0,K)$ for $A_0$ (with $L = K$) is guaranteed by a theorem of Serre (Th\'eor\`eme 3 in \cite{MR1944805}, see also \cite{MR1730973}, No. 136, Th\'{e}or\`{e}me 2'). We will show that the same constant $c$ works not only for $A$, but for any quotient $B$ of $(A_0)_{\bar{K}}$ by an algebraic subgroup (that could be of positive dimension) and therefore also for quotients of these quotients etc. Furthermore, the number field $L$ can be chosen so that not only $B$ can be defined over it, but also the homomorphism $(A_0)_{\bar{K}} \to B$ (and the same for quotients of $B$ etc. to any finite ``depth"). In fact, it seems that this strengthening is already used implicitly in \cite{GT17} when passing from $A$ to $A/\Stab(V)$.

Let $\psi: (A_0)_{\bar{K}} \to B$ be a surjective homomorphism. Let $L \subset \bar{K}$ be the smallest field containing $K$ over which the algebraic subgroup $\ker \psi$ of $(A_0)_{\bar{K}}$ is defined. Then $B$ is isomorphic to $B'_{\bar{K}}$, where $B'$ is an abelian variety defined over $L$, and there exists a surjective homomorphism $\chi: (A_0)_L \to B'$ such that $(\ker \chi)_{\bar{K}} = \ker \psi$.

Suppose that $\sigma \in \Gal(\bar{K}/K)$ acts as multiplication by $a \in \hat{\mathbb{Z}}^{\ast}$ on the torsion of $(A_0)_{\bar{K}}$. For every torsion point $t \in (\ker \psi)(\bar{K})$, we therefore have $\sigma(t) = at \in (\ker \psi)(\bar{K})$. As the torsion points in $\ker \psi$ lie dense in $\ker \psi$, we deduce that $\sigma(\ker \psi) \subset \ker \psi$ and hence $\sigma(\ker \psi) = \ker \psi$. It follows that $\sigma \in \Gal(\bar{K}/L)$.

If we identify $B$ with $B'_{\bar{K}}$, then $\psi: (A_0)_{\bar{K}} \to B$ is the base change of $\chi: (A_0)_L \to B'$ and $\sigma$ fixes $\psi$. Since $\sigma$ acts as multiplication by $a$ on the torsion of $(A_0)_{\bar{K}}$, this implies that $\sigma$ also acts as multiplication by $a$ on the torsion of $B$. It is clear that this can be iterated now for quotients of $B$ etc.

We still have to take care of the projective embedding $B \hookrightarrow \mathbb{P}^N_{\bar{K}}$ that we had momentarily forgotten; for this, we might have to replace $\sigma$ by some fixed iterate, depending only on $g$. The projective embedding is associated to a symmetric very ample line bundle $\mathcal{L}$ on $B$. Up to an isomorphism of $\mathbb{P}^N_{\bar{K}}$, the embedding can be defined over any field of definition of $\mathcal{L}$ since it is projectively, so in particular linearly normal. Let $\lambda: B \to \hat{B}$ be the homomorphism induced by $\mathcal{L}$. By Th\'eor\`eme 1.2 in \cite{R17}, it is defined over a field extension $L'$ of $L$ of degree bounded in terms of $g$.

Let $\mathcal{P}$ denote the Poincar\'e line bundle on $B \times_{\bar{K}} \hat{B}$. The line bundle $\mathcal{M} = (\id_{B},\lambda)^{\ast}\mathcal{P}$ is symmetric by Theorem 8.8.4 in \cite{MR2216774} and defined over $L'$. By Proposition 6.10 in \cite{MR1304906}, $\mathcal{L}^{\otimes 2}$ and $\mathcal{M}$ are algebraically equivalent. Since both $\mathcal{L}$ and $\mathcal{M}$ are symmetric, $\mathcal{L}^{\otimes 2} \otimes \mathcal{M}^{\otimes(-1)}$ is both symmetric and antisymmetric by Theorem 8.8.3 in \cite{MR2216774}. Therefore, $\mathcal{L}^{\otimes 4} \otimes \mathcal{M}^{\otimes(-2)}$ is trivial. It follows that for any conjugate $\mathcal{L}'$ of $\mathcal{L}$ over $L'$, $\mathcal{L}^{\otimes 4} \otimes \mathcal{L}'^{\otimes (-4)}$ is trivial. This implies that $\mathcal{L} \otimes \mathcal{L}'^{\otimes(-1)}$ corresponds to a torsion point of $\hat{B}$ of order dividing $4$ and hence there are at most $4^{2g}$ possibilities for $\mathcal{L}'$ (up to isomorphism). Since the relative Picard functor $\Pic_{B'/L}$ is representable by a scheme that is locally of finite type over $L$ thanks to Th\'eor\`eme 3.1 and the following paragraph in \cite{FGAV}, this implies that $\mathcal{L}$ is defined over a field extension of $L'$ of degree bounded in terms of $g$ and we are done.

As explained at the beginning of the proof, Theorem \ref{thm:countingacrossisogenyclass} now follows from Theorem 4.5 in \cite{GT17}.
\end{proof}

\section*{Acknowledgements}

This article has grown out of a section of my PhD thesis. I thank my PhD advisor Philipp Habegger for his constant support and for many helpful and interesting discussions. I thank Philipp Habegger and Ga\"el R\'emond for helpful comments on the thesis. I thank Fabrizio Barroero and Ga\"el R\'emond for useful remarks on a preliminary version of this article. I thank Francesco Campagna, Davide Lombardo, C\'esar Mart\'inez, David Masser, Jonathan Pila, and Harry Schmidt for useful conversations and correspondence. I thank the referee for their helpful suggestions and in particular for improving the exponent in Lemma \ref{lem:uniformhomothety} from $1663200$ to $2$, which also improved the upper bounds in Theorems \ref{thm:effectiveisogenymm} and \ref{thm:effectiveisogenyml}. When I had the initial idea for this article, I was supported by the Swiss National Science Foundation as part of the project ``Diophantine Problems, o-Minimality, and Heights", no. 200021\_165525. I completed it while supported by the Early Postdoc.Mobility grant no. P2BSP2\_195703 of the Swiss National Science Foundation. I thank the Mathematical Institute of the University of Oxford and my host there, Jonathan Pila, for hosting me as a visitor for the duration of this grant.

\bibliographystyle{acm}
\bibliography{Bibliography}
\end{document}